\documentclass[11pt, reqno, a4paper]{amsart}
\usepackage{graphicx}
\usepackage{float}
\usepackage{tikz}
\usepackage{etoolbox}

\usepackage[a4paper,margin=1in]{geometry}
\usepackage{amsmath,amssymb,amsthm,mathtools}
\usepackage{enumitem}
\usepackage{hyperref}

\newtheorem{theorem}{Theorem}[section]
\newtheorem{corollary}[theorem]{Corollary}
\newtheorem{lemma}[theorem]{Lemma}
\newtheorem{proposition}[theorem]{Proposition}
\theoremstyle{definition}
\newtheorem{definition}[theorem]{Definition}
\newtheorem{remark}[theorem]{Remark}
\newtheorem{example}[theorem]{Example}
\theoremstyle{plain}
\newtheorem{claim}{Claim}
\AtBeginEnvironment{definition}{\vspace{0.2em}}
\AtEndEnvironment{definition}{\vspace{0.2em}}
\AtBeginEnvironment{remark}{\vspace{0.2em}}
\AtEndEnvironment{remark}{\vspace{0.2em}}
\AtBeginEnvironment{lemma}{\vspace{0.2em}}
\AtEndEnvironment{lemma}
{\vspace{0.2em}}
\AtBeginEnvironment{theorem}{\vspace{0.2em}}
\AtEndEnvironment{theorem}{\vspace{0.2em}}
\newcommand{\dom}{\text{\rm dom}}
\newcommand{\diam}{\text{\rm diam}}

\title{Borel Graphs Generated by Commuting Functions}
\author{Su Gao}
\address{School of Mathematical Sciences and LPMC, Nankai University, Tianjin 300071, P.R. China}
\email{sgao@nankai.edu.cn}
\thanks{The authors acknowledge the partial support of their research by the Fundamental Research Funds for the Central Universities and by the National Natural Science Foundation of China (NSFC) grant 12271263. The paper was completed when all of the authors were attending a Tianyuan Workshop on Definability and Computation; the authors thank the Tianyuan Mathematics Research Center for their partial support of the research.}

\author{Xiangxi Hu}
\address{School of Mathematical Sciences and LPMC, Nankai University, Tianjin 300071, P.R. China}
\email{xxhu@mail.nankai.edu.cn}

\author{Jie Zou}
\address{School of Mathematical Sciences and LPMC, Nankai University, Tianjin 300071, P.R. China}
\email{jzou@mail.nankai.edu.cn}

\subjclass[2020]{Primary 03E15; Secondary 05C12.}

\begin{document}
\maketitle
\begin{abstract}
In this paper we study Borel graphs generated by finitely many commuting Borel functions. We give a geometric analysis of the free part of such graphs based on marker sets and marker regions.  Assuming the existence of 
$r$-forward-independent hitting sets with bounded syndeticity, we obtain marker decompositions of 
the free part into rootless and rooted regions with controlled geometry. As
applications, we derive finite Borel asymptotic dimension and hyperfiniteness, and obtain upper bounds for Borel edge
chromatic numbers which improve previously known results. For the case in which each of the commuting Borel functions is  bounded-to-one, we verify the existence of $r$-forward-independent hitting sets with syndeticity $Cr$ for some constant $C$. This gives another proof of a recent theorem of Shinko--Weilacher--Yu, and is used to show that if one of the commuting Borel functions is injective and another one is bounded-to-one and exactly even-to-one, then the graph has a Borel perfect matching.
\end{abstract}
%-----------------------
\section{Introduction}

Descriptive combinatorics studies classical combinatorial problems under constraints of definability, with a particular focus on Borel graphs whose connectedness relations are countable Borel equivalence relations. A central theme in this study is to understand how additional structure in a Borel graph can
be used to obtain Borel colorings, Borel edge colorings, Borel matchings, and
hyperfiniteness. 

A natural class of Borel graphs consists of graphs generated
by finitely many Borel functions. If $X$ is a Polish space or a standard Borel space, and $f_1, \dots, f_n\colon X \to X$ are Borel unary functions, then the graph $G_{f_1,\dots, f_n}$ is defined as $(X, R)$, where
$$ \{x, y\}\in R\iff x\neq y \mbox{ and } \exists 1\leq i\leq n\ [\, f_i(x)=y \mbox{ or } f_i(y)=x\, ]. $$
A systematic study of such graphs started with the seminal paper by Kechris--Solecki--Todorcevic \cite{KechrisSoleckiTodorcevic1999}. The following results about the Borel chromatic numbers were shown in \cite{KechrisSoleckiTodorcevic1999}:
\begin{itemize}
\item When $n=1$, the Borel chromatic number of $G_f$ is either $1, 2, 3$ or $\aleph_0$;
\item In general, if $G_{f_1,\dots, f_n}$ has finite Borel chromatic number, then it is at most $3^n$;
\item If each $f_1, \dots, f_n$ is $\leq\!k$-to-$1$, then the Borel chromatic number of $G_{f_1,\dots, f_n}$ is at most
$$ \min\{(k+1)n+1, \, 3^n\}. $$
\end{itemize}
They asked if the last bound can be improved to $2n+1$; this problem is still open. 

When the generating functions commute with each other, 
the resulting graph has additional structure and often better results can be proved than in the general noncommuting setting. For example, it was shown in \cite{Palamourdas2012,MeehanPalamourdas2021} that, if $f_1, \dots, f_n$ are commuting functions with no fixed points, the Borel chromatic number of $G_{f_1,\dots, f_n}$ is indeed at most $2n+1$ if it is finite. In contrast, for general noncommuting functions, a known bound not involving $k$ is only $\frac{1}{2}(n+1)(n+2)-2$.

In this paper, we study Borel edge colorings, Borel matchings, and hyperfiniteness of Borel graphs generated by finitely many commuting Borel functions. A general result from \cite{KechrisSoleckiTodorcevic1999} implies that, for any $\leq\!k$-to-$1$ functions $f_1, \dots, f_n$ (not necessarily commuting), the Borel edge chromatic number of $G_{f_1,\dots, f_n}$ is at most $2n(k+1)-1$.  With commuting assumptions, we improve the bound to $(n+1)(k+3)-5$.  We also prove under additional assumptions that Borel perfect matchings exist and that the connectedness relation of the graph is hyperfinite. 

In the special case that the generating functions $f_1,\dots, f_n$ are Borel isomorphisms, the resulting graph $G_{f_1,\dots, f_n}$ is the Schreier graph of a Borel action of the group $\langle f_1, \dots, f_n\rangle$. Furthermore, if $f_1,\dots, f_n$ are commuting with each other, we have an action of a countable abelian group. For countable abelian group actions, the Borel chromatic numbers, Borel edge chromatic numbers, Borel matchings, and their hyperfiniteness have all been understood (see \cite{JacksonKechrisLouveau2002, GaoJackson2015, GaoJacksonKrohneSeward2024, ConleyJacksonMarksSewardTuckerDrob2020, Bencs2021FactorIID, GrebikRozhon2023LocalProblems, Weilacher2024, BorelFactorsSubshifts2025}). Our approach in this paper is inspired by \cite{GaoJackson2015} and the more recent \cite{GaoJacksonKrohneSeward2024}, in which an elaborate theory of marker structures was developed for countable abelian group actions. 

In this paper our main effort is to develop a geometric analysis of the Borel graph $G_{f_1,\dots, f_n}$ based on marker sets and marker regions. For this purpose, it is convenient to view $G_{f_1,\dots, f_n}$ as consisting of orbits of a monoid action, and we will work on the free part of this action, namely 
$$
\begin{aligned}
F(X)=
\bigl\{x\in X:\ &
\forall \alpha_1,\ldots, \alpha_n, \beta_1,\ldots, \beta_n\in\mathbb N,\\
&
(f_1^{\alpha_1}\circ\cdots\circ f_n^{\alpha_n})(x)
=
(f_1^{\beta_1}\circ\cdots\circ f_n^{\beta_n})(x)
\Longrightarrow
\alpha_i=\beta_i
\text{ for every }1\le i\le n
\bigr\}.
\end{aligned}
$$
% More precisely, we work with graphs of the form $G_{f_1,\ldots,f_n}\upharpoonright F(X)$, where $f_1,\ldots,f_n$ are commuting Borel functions and $F(X)$ denotes the free part. We often assume that the generating functions are countable-to-one, so that the associated connected equivalence relation is countable Borel. For applications requiring bounded degree, such as edge colorings, we impose stronger $k$-to-one assumptions.

Figure~\ref{Fig.1} illustrates the local geometry of
$G_{f_1,f_2}\upharpoonright F(X)$ in a typical case, where $f_1$ is
injective and $f_2$ is $2$-to-$1$.

\begin{figure}[H]
  \centering
  \includegraphics[width=0.38\textwidth]{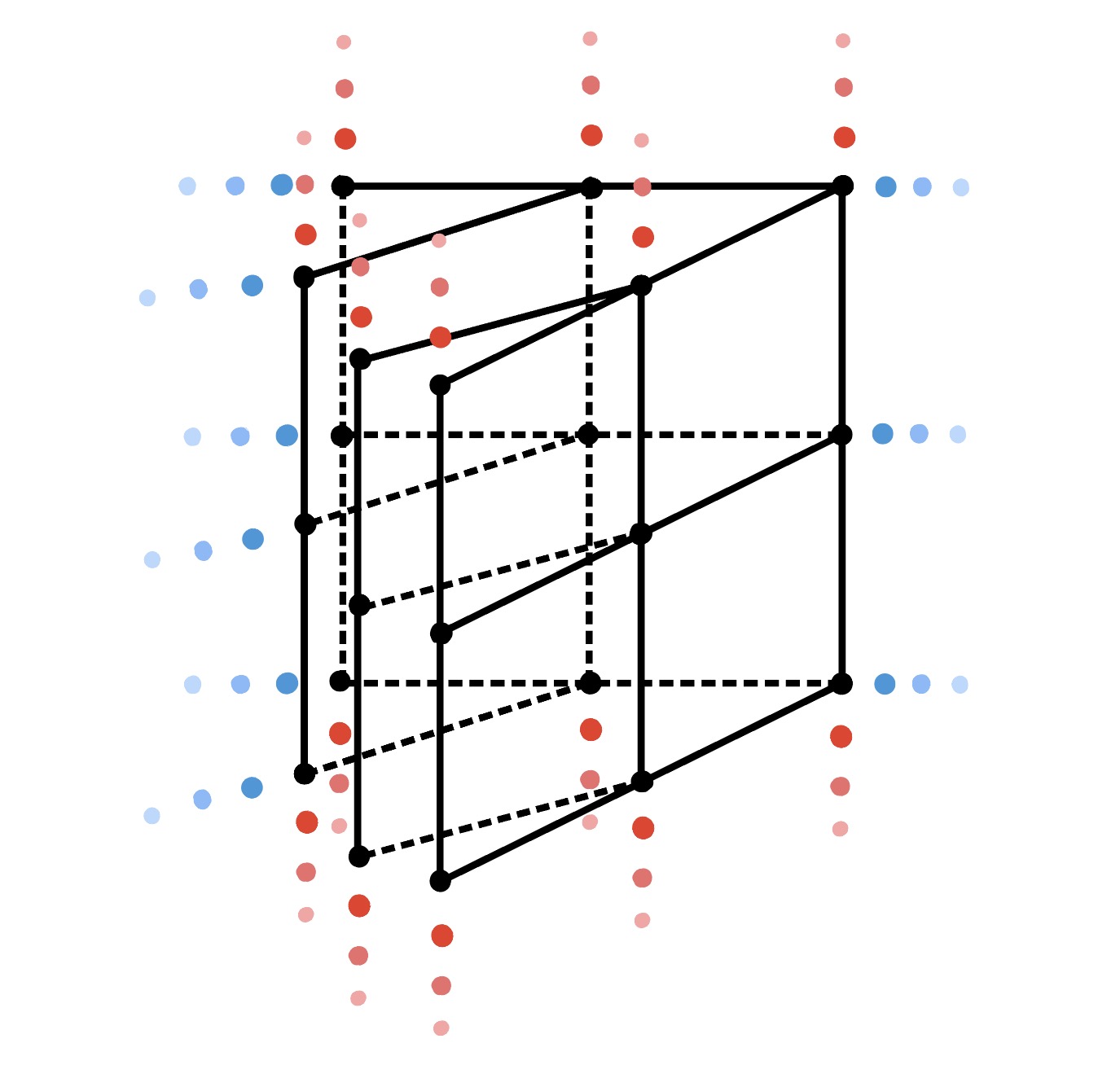}
  \caption{The free part of the graph.}
  \label{Fig.1}
\end{figure}

An important technique to prove hyperfiniteness is to show finite Borel asymptotic
dimension (see \cite{ConleyJacksonMarksSewardTuckerDrob2023}). However, finite Borel asymptotic dimension does not necessarily hold even when there is only one generating function. For example, consider the shift map $S:[\mathbb N]^\mathbb{N}\to [\mathbb N]^\mathbb{N}$, $S(x)(m)=x(m+1)$, where $[\mathbb N]^\mathbb{N}=\{x\in \mathbb{N}^\mathbb{N}:\forall n\in \mathbb{N},\ x(n)<x(n+1)\}$. The graph $G_S$ generated by this shift map does not have finite Borel
asymptotic dimension. This follows from the fact that $G_S$ has infinite Borel chromatic number (\cite[Example 3.2]{KechrisSoleckiTodorcevic1999}) and \cite[Corollary 8.2]{ConleyJacksonMarksSewardTuckerDrob2023}. On the other hand, for any Borel function $f$, the connectedness relation of $G_f$ is hyperfinite (\cite[Corollary 8.2]{DoughertyJacksonKechris1994}). 

Since we consider countable-to-one Borel functions in this paper, which generate locally countable Borel graphs, another issue is that finite Borel asymptotic dimension does not imply hyperfiniteness in this context. For this we define a stronger notion of finite strong Borel asymptotic dimension, which does imply hyperfiniteness for locally countable Borel graphs, and use it throughout the paper.

In \cite{GrebikHiggins2026}, finite Borel asymptotic dimension for graphs generated by
a single Borel function was characterized in terms of forward-independent
hitting sets (\cite[Theorem 1.2]{GrebikHiggins2026}). More precisely, for such a graph $G_f$, finite Borel asymptotic
dimension is equivalent to the existence, for every $r\in\mathbb N^+$, of a
Borel $r$-forward-independent hitting set. 
Motivated by this result, we define a notion of $r$-forward-independent hitting set with bounded syndeticity for $G_{f_1,\dots, f_n}$ and prove its sufficiency for finite strong
Borel asymptotic dimension. 

The basic result in \cite{GaoJackson2015} is a Borel marker decomposition of the phase space of a free Borel $\mathbb Z^n$-action into $n$-dimensional rectangular regions with edge lengths $d$ or $d+1$ (\cite[Theorem~3.1]{GaoJackson2015}). This was the starting point of a hyperfiniteness proof for a broad class of shift actions (\cite[Theorem~8.1]{GaoJackson2015}).
The present setting is different from the group-action setting because the
generating maps need not be invertible. This creates an asymmetric geometry:
the functions can be always applied in the forward direction, while in the backward direction there may be many
preimages or no preimage at all. For this reason, the rectangular marker
constructions for $\mathbb Z^n$-actions cannot be applied directly. In this paper, we develop some structural decomposition results for 
graphs generated by finitely many commuting Borel functions. The main output is
a family of Borel marker regions on the free part $F(X)$, whose shapes are
controlled in all generating directions.

We state the main consequences of this decomposition below. The first result is
the structural tiling theorem, which is proved in Section~4.
\begin{theorem}
Let $X$ be a standard Borel space, and let $d\in \mathbb{N}^+$. For $n\in \mathbb{N}$, suppose that $f_1,\cdots ,f_n:X\to X$ are countable-to-one commuting Borel functions. Assume that for every $r\in\mathbb N^+$, there is a Borel $r$-forward-independent
hitting set with syndeticity $Cr$ for
$G_{f_1,\ldots,f_n}\upharpoonright F(X)$,
where $C\in\mathbb N^+$ is independent of $r$. Then there exists a smooth Borel subequivalence relation
$R_d\subseteq E_{f_1,\cdots,f_n}\upharpoonright F(X)$
such that every $R_d$-class $A$ is a rooted region satisfying
 $d_i(A)=d$ or $d_i(A)=d+1$ for $1\le i \le n$. 
\end{theorem}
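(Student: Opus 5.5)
The plan is to follow the Gao--Jackson rectangular marker construction for free $\mathbb Z^n$-actions, replacing the two-sided group geometry with the one-sided geometry of the monoid $\mathbb N^n$ acting by $\vec\alpha\cdot x=f_1^{\alpha_1}\circ\cdots\circ f_n^{\alpha_n}(x)$. On $F(X)$ this action is free. The key point is that for any $x$ the forward cone $\{\vec\alpha\cdot x:\vec\alpha\in\mathbb N^n\}$ is an isomorphic copy of $\mathbb N^n$. A rooted region with root $x$ is a set of the form $\{\vec\alpha\cdot x: 0\le\alpha_i<d_i\}$, and $d_i(A)$ is its side length in direction $i$. Because all maps are applied forward, a rooted region is determined Borel-ly by its root and its side lengths. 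So it suffices to produce a Borel set $M\subseteq F(X)$ of roots and a Borel assignment $x\mapsto(d_1(x),\dots,d_n(x))\in\{d,d+1\}^n$ such that the boxes $B(x)=\{\vec\alpha\cdot x:\alpha_i<d_i(x)\}$, $x\in M$, partition $F(X)$. The relation "lying in the same box" is then smooth, with the root as a Borel selector, and it is contained in $E_{f_1,\dots,f_n}\upharpoonright F(X)$.

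First, fix $r$ large relative to $d$ (say $r\gg C d n$) and take a Borel $r$-forward-independent hitting set $H$ with syndeticity $Cr$. Forward independence means that distinct points of $H$ have forward cones that are far apart in the $\ell^\infty$-distance of their common cone coordinates. The syndeticity bound means every point $y$ has some $\vec\beta$ with $|\vec\beta|\le Cr$ and $\vec\beta\cdot y\in H$, or lies within $Cr$ of one. Next I would run the analogue of the Gao--Jackson ``Voronoi then straighten'' procedure. Each $y\in F(X)$ is assigned to the marker $h\in H$ that is nearest in a fixed Borel way, using the forward-reach coordinates. This yields marker regions $R_h$ of diameter $O(Cr)$, each containing a large forward cube rooted near $h$. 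Then, as in \cite{GaoJackson2015}, the boundaries are straightened one direction at a time. For direction $i$ one uses the fact that in every line $\{f_i^k(z)\}$ the points of $H$-boundaries are separated by gaps $\ge r$. Any integer $\ge d^2$ can be written as a sum of $d$'s and $(d+1)$'s, so each gap is cut into intervals of length $d$ or $d+1$, which produces the product structure coordinate by coordinate. The commutativity of the $f_i$ is what makes the cuts made in direction $i$ consistent along every parallel line in the other directions.

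The main obstacle is the backward direction. A point may have many preimages or none, so regions cannot be tiled by boxes extending backward, and lines in direction $i$ may start at points with no $f_i$-preimage. My approach would be to define the cuts on each $i$-line relative to the markers ahead of it. Going forward from any point one eventually reaches a marker region within $Cr$ steps, so all decisions can be propagated backward from forward data. This makes the cut points Borel and well defined even on branching backward trees. The delicate case is a maximal backward chain of finite length $<d$ that ends at a point with no preimage. There a residual segment shorter than $d$ can appear, and I would handle it by shifting the boundary forward and absorbing the residual into an adjacent interval. Arguing that this absorption still leaves every length in $\{d,d+1\}$ is exactly where the bounded syndeticity $Cr$ with $r\gg d$ should be used, and I expect this to be the hardest verification.
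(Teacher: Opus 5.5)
There is a genuine gap at the outset: the object you construct is not a rooted region in the sense of the paper. A rooted region with root $x$ is
\[
A=\bigl\{y\in F(X): f_1^{\alpha_1}\cdots f_n^{\alpha_n}(y)=x \text{ for some } 0\le\alpha_i\le \diam_{f_i}(A)\bigr\}.
\]
So the root is the forward-most point, and $A$ contains \emph{every} iterated preimage of $x$ down to the prescribed depths. It is a backward box, a product of finite trees, not the forward grid $\{\vec\alpha\cdot x:\alpha_i<d_i\}$.

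The two notions differ as soon as some $f_i$ is not injective. If $f_i(x')=f_i(x)$ with $x'\neq x$, then $x'$ is mapped onto the top of your box $B(x)$ within the allowed depths. Yet by freeness $x'\notin B(x)$, so $B(x)$ is not a rooted region. Hence even a complete execution of your plan would give a partition of the wrong kind. Your assertion that regions ``cannot be tiled by boxes extending backward'' is exactly what the theorem says \emph{can} be done. Your own principle of propagating decisions backward from forward data in fact produces backward boxes: siblings share their forward data and are treated alike. That sits badly with forward boxes, which must separate siblings.

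In the paper the proof is a short refinement of Theorem~\ref{thm:markers}. One takes smooth rootless regions with $d_i\ge D>d^2$. Each class is cut in direction $i$ according to the level $L^i_y$, writing $d_i(B)=ad+b(d+1)$, so all points of a level, and in particular all preimages together, stay in one piece. Each piece is then split along $\bigcap_i F_i^+(A)$ into rooted regions.

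The real work is in Theorem~\ref{thm:markers}, and there your sketch has a second gap. You take for granted that after a Voronoi-type assignment the boundary points on each $f_i$-line are $r$-separated. Nothing yields this in the noninvertible setting, and $f_i$-lines exist only in the forward direction. The paper has to enforce face separation through the $2n$ adjustment steps. This is where the linear syndeticity $Cr$ is used: it bounds the number of markers that can obstruct a given face by a constant such as $(4C+1)^{n-1}$, independent of $D_1$. As a result the total adjustment is negligible compared with $D_1$.

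The ``residual segment'' problem you single out is not where syndeticity is needed. The paper's definition of $d_i(A)$ already tolerates regions truncated by missing preimages. In your setting, absorbing a residual of length $\ell\ge 2$ into a neighbouring interval would give an interval of length at least $d+2$, outside $\{d,d+1\}$.
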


The technical hypothesis in the above theorem is necessary. In fact, for each $d$, all of the roots of the rooted regions in the above theorem form a $d$-forward-independent hitting set with syndeticity $nd$. 

The rooted marker region decomposition has an immediate application to Borel
asymptotic dimension and hyperfiniteness.
The next theorem gives a sufficient condition for finite Borel asymptotic
dimension, and hence for hyperfiniteness, for graphs generated by commuting
Borel functions.

\begin{theorem}
Let $X$ be a standard Borel space. For $n\in\mathbb{N}$, suppose that $f_1,\cdots,f_n:X\to X$ are countable-to-one commuting Borel functions, and let $\rho$ be the graph metric on $G_{f_1,\cdots,f_n}\upharpoonright F(X)$. Assume that for every $r\in\mathbb N^+$, there is a Borel $r$-forward-independent hitting set with syndeticity $Cr$ for $G_{f_1,\ldots,f_n}\upharpoonright F(X)$, where $C\in\mathbb N^+$ is independent of $r$. 
Then $(F(X),\rho)$ has finite strong Borel asymptotic dimension. In particular,
$E_{f_1,\cdots,f_n}\upharpoonright F(X)$ is hyperfinite.
\end{theorem}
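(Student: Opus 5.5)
We will derive the statement from the structural tiling theorem above (the one producing the smooth subequivalence relation $R_d$), applied with several different values of $d$. We first fix the notion in play. Finite strong Borel asymptotic dimension at most $m$ should mean the following. For every $r$ there are Borel sets $U_0,\dots,U_m$ covering $F(X)$, together with Borel equivalence relations on them, such that:
\begin{itemize}
\item each class is finite, of uniformly bounded $\rho$-diameter;
\item distinct classes inside the same $U_j$ are at $\rho$-distance $>r$.
\end{itemize}
We take the finiteness of classes as the "strong" requirement, since that is what gives hyperfiniteness in the locally countable setting. The finite-class requirement matters because a rooted region need not be finite when the $f_i$ are only countable-to-one. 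So our plan is to pass from rooted regions to finite pieces. The last step of the plan is exactly this passage, and we flag it there as the main obstacle.

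\textbf{Step 1 (shape of the regions).} Fix $r$ and choose $d\gg r$, say $d\ge 10nr$. The tiling theorem gives $R_d$, whose classes $A$ are rooted regions with $d_i(A)\in\{d,d+1\}$. We then use the coordinate description of a rooted region. Its root $a$ satisfies: every $x\in A$ has $f_1^{\alpha_1}\cdots f_n^{\alpha_n}(x)=a$ with $0\le \alpha_i\le d_i(A)$. This should follow from the definitions of rooted region and $d_i$ in Section~4, and freeness makes the exponents unique. Assigning to $x$ its exponent vector gives Borel local coordinates on $A$, with $\rho$ controlled by coordinate differences up to a bounded factor. The bound involves pulling back along the same preimage branch.

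\textbf{Step 2 (colouring by boundary layers).} In the style of Gao--Jackson, for $j=0,\dots,n$ let $U_j$ be the set of points whose coordinates lie within distance $\sim r$ of the "faces" of their region in exactly $j$ directions. We then cut each $U_j$ into pieces. Points of $U_0$ in one region form one piece. For $j\ge1$, the points near the same $j$-dimensional corner-face of adjacent regions are grouped together. These groupings are Borel because $R_d$ is smooth and the roots form a Borel transversal. Pieces with the same $j$ are $r$-separated, because the interior layers have thickness $\ge d-2r$ in the remaining directions.

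\textbf{Step 3 (finiteness of pieces; main obstacle).} The pieces so far have bounded diameter in coordinates, but they can be infinite, since a point may have countably many $f_i$-preimages. Here we use the hypothesis beyond the tiling: a countable-to-one bounded-diameter Borel family in a locally countable graph still need not be finite. We therefore take $R_d$ only as a coarse structure and refine it. Within a region, fix Borel enumerations of the preimage sets (Lusin--Novikov) and split the region according to the indices used along the backward path from the root. Each piece then becomes a finite union indexed by bounded-length words with bounded indices, plus a tail. The tail is handled by an exhaustion: we increase the index bound $k$ and treat the whole construction as an increasing union of finite Borel relations. Proving that this still yields uniformly bounded diameter and the $r$-separation is where we expect the real work. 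If the refinement fails to keep separation, our fallback is to weaken the claim: finite Borel asymptotic dimension from Steps 1--2, plus hyperfiniteness obtained directly. For the latter, $R_{d}$ for $d=2^k$ can be chosen nested via the tiling construction, which makes $E\upharpoonright F(X)$ an increasing union of smooth relations. Each of these is hypersmooth with countable classes, hence hyperfinite by Dougherty--Jackson--Kechris, and the union is hyperfinite by the same theorem.
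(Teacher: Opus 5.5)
There is a genuine gap, and it starts with your reading of ``strong''. Definition~\ref{def:fBasd}(2) asks only that $E_r$ be \emph{smooth}, with classes of uniformly bounded $\rho$-diameter and every ball $B_\rho(x,r)$ meeting at most $m+1$ classes. It does not ask for finite classes. Your finite-class notion cannot hold in the generality of the theorem, so Step~3 is aiming at a false statement. Preimages of points of $F(X)$ lie in $F(X)$. If some $x\in F(X)$ has infinitely many $f_1$-preimages $y_0,y_1,\dots$, these are pairwise at $\rho$-distance $\le 2$. So for $r\ge 2$, all of them lying in a common $U_j$ must lie in a single class, and by pigeonhole some class is infinite. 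This does occur under the hypotheses, e.g.\ for $f(x,k)=(\sigma x,0)$ on $2^{\mathbb Z}\times\mathbb N$.

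With the correct definition, Step~3 is unnecessary because the marker relations are already smooth. Step~2 can be bypassed too. The paper takes $E_r:=R_D$ from Theorem~\ref{thm:markers} with $D>2r$ and gets bounded diameter from Remark~\ref{rem:uniform-upper-bound}. It then shows that $B_\rho(x,r)$ meets at most $2^n$ classes, as follows. Fix $y_0$ with $f_1^{2r}\cdots f_n^{2r}(y_0)=f_1^r\cdots f_n^r(x)$. Every $z\in B_\rho(x,r)$ has the same $f_1^{\gamma_1}\cdots f_n^{\gamma_n}$-image, with $\gamma_i\le 2r<D$, as some $f_1^{h_1}\cdots f_n^{h_n}(y_0)$ with $h_i\le 2r$. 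Hence $z$ is $R_D$-equivalent to that point by the last clause of Theorem~\ref{thm:markers}. Side lengths $\ge D>2r$ then let each coordinate cross at most one face. Your corner-face grouping in Step~2 is in any case not well defined as stated. In the noninvertible setting a backward face borders many regions along different preimage branches, and the faces of neighbouring regions need not align.

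Your fallback does not rescue the argument. It gives up the ``strong'' conclusion, which is part of the statement. Its hyperfiniteness argument also rests on the unsupported claim that the $R_{2^k}$ can be chosen nested. Theorem~\ref{thm:rooted-markers} is proved independently for each $d$ from a different hitting set, with no coherence across scales. Even a nested sequence of bounded-diameter relations need not exhaust $E_{f_1,\ldots,f_n}\upharpoonright F(X)$. As the paper stresses, plain finite Borel asymptotic dimension does not by itself give hyperfiniteness for locally countable graphs.

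The device you are missing is Lemma~\ref{lem:hyp}. Given a smooth $F_{n-1}$ with selector $s$ and class diameters $\le r_n$, take the smooth $E$ supplied at scale $2r_n$ and set $F_n:=\{(x,y): s(x)\,E\,s(y)\}$. This forces nestedness, and each $F_n$ is smooth. Then $\bigcup_n F_n$ is hypersmooth, hence hyperfinite by Dougherty--Jackson--Kechris. It has index at most $m+1$ in the connectedness relation, so Jackson--Kechris--Louveau gives hyperfiniteness of $E_{f_1,\ldots,f_n}\upharpoonright F(X)$.
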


Recently it has been shown by Shinko--Weilacher--Yu \cite{NaryshkinShinkoWeilacherYuCommutingFunctions} that if $\{f_i\}_{i\in I}$ is a countable family of commuting Borel functions each of which is bounded-to-one, then $E_{\{f_i\}_{i\in I}}$ is hyperfinite. In fact, they showed that if $f_1, \dots, f_n$ are bounded-to-one commuting Borel functions, then the Borel asymptotic dimension of the free part of $G_{f_1,\dots, f_n}$ is indeed finite. In this context, we have the following result.

\begin{theorem} Let $X$ be a standard Borel space, and let $f_1, \dots, f_n\colon X\to X$ be bounded-to-one commuting Borel functions. Then for any $r\in \mathbb{N}^+$, there is a Borel $r$-forward-independent hitting set with syndeticity $2n^2r$ for $G_{f_1,\dots, f_n}\upharpoonright F(X)$.
\end{theorem}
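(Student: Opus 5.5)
The plan is to build the hitting set from a Borel maximal independent set in an auxiliary locally finite Borel graph, and then push that set forward by a fixed multi-index so that ``backward'' witnesses turn into forward ones. I use the following reading of the definitions. For $\alpha\in\mathbb N^n$ write $f^\alpha=f_1^{\alpha_1}\circ\cdots\circ f_n^{\alpha_n}$ and $|\alpha|=\sum_i\alpha_i$. A set $S\subseteq F(X)$ is \emph{$r$-forward-independent} if $f^\alpha(y)=y'$ with $y,y'\in S$ and $|\alpha|\le r$ forces $y=y'$. It has \emph{syndeticity $s$} if every $x\in F(X)$ has some $\alpha$ with $|\alpha|\le s$ and $f^\alpha(x)\in S$.

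\textbf{Step 1 (auxiliary graph).} Fix $r$ and integers $m,M$ to be tuned. On $F(X)$ let $H$ join $y\neq y'$ whenever $f^\alpha(y)=f^\beta(y')$ for some $\alpha,\beta$ with all coordinates $\le M$.
\begin{itemize}
\item Because each $f_i$ is bounded-to-one, a point has only boundedly many points $y'$ with $f^\beta(y')=z$ for a given $z$. Hence $H$ is a Borel graph of bounded degree.
\item By the Kechris--Solecki--Todorcevic results quoted in the introduction, $H$ has a finite Borel proper coloring.
\item Running the greedy algorithm color by color then yields a Borel maximal $H$-independent set $S_0$.
\end{itemize}

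\textbf{Step 2 (push forward).} Put $\delta=(m,\dots,m)$ and $S=f^\delta(S_0)$, which is Borel since $f^\delta$ is countable-to-one. Restricting to $F(X)$ causes no trouble, because $F(X)$ is forward invariant.

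\emph{Hitting.} Let $x\in F(X)$. Either $x\in S_0$, or by maximality there is $y\in S_0$ with $f^\alpha(x)=f^\beta(y)$, all coordinates of $\alpha,\beta$ at most $M$. If $\beta\le\delta$ coordinatewise, commutativity gives
$$f^\delta(y)=f^{\delta-\beta}f^\beta(y)=f^{\alpha+\delta-\beta}(x),$$
so $x$ hits $S$ within $|\alpha|+|\delta|\le nM+nm$.

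\emph{Independence.} Suppose $f^\gamma(f^\delta y)=f^\delta(y')$ with $|\gamma|\le r$ and $y,y'\in S_0$. This is an instance of the $H$-relation with multi-indices $\gamma+\delta$ and $\delta$. It therefore forces $y=y'$ as long as $m+r\le M$. Freeness of $y$ then gives $\gamma=0$.

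\textbf{Step 3 (the obstacle).} The two requirements pull against each other. Hitting wants $\beta\le\delta$, i.e.\ $M\le m$, while independence wants $M\ge m+r$. This is exactly where the non-invertibility bites. My plan to reconcile them is to make $H$ asymmetric: join $y,y'$ only when $f^{\gamma+\delta}(y)=f^{\delta}(y')$ with $|\gamma|\le r$ (in either order). Independence of $S=f^\delta(S_0)$ is then automatic.

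For hitting, apply maximality not to $x$ but to the point $z=f^{\delta}(x)$, handling the two possible orientations of the $H$-edge separately.
\begin{itemize}
\item If the witness $y$ satisfies $f^{\gamma+\delta}(z)=f^\delta(y)$, then $x$ hits $f^\delta(y)\in S$ within $|\gamma|+2|\delta|$.
\item In the reversed orientation, $f^{\gamma+\delta}(y)=f^\delta(z)$, which says $f^{2\delta}(x)$ lies forward of $f^\delta(y)$. Here I would iterate, at most $n$ times, reducing one coordinate of $\gamma$ at a time and using the case distinction $\gamma_i\le\delta_i$ in each coordinate.
\end{itemize}
The goal is to choose $\delta=(2nr,\dots,2nr)$, or a similar scale, so that a forward path of total length at most $n\cdot 2nr=2n^2r$ reaches $S$. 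I expect verifying this bookkeeping, namely that in every orientation case some element of $S$ is reached forward within $2n^2r$, to be the main difficulty. If the iteration fails, the fallback is to perform the construction one coordinate direction at a time, building $S$ as an $n$-fold refinement, with each stage costing about $2nr$ in syndeticity.
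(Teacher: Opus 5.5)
There is a genuine gap, and it is exactly the step you defer as ``bookkeeping.'' In the reversed orientation of Step~3 all you know is $f^{\gamma}(f^{\delta}(y))=f^{2\delta}(x)$ with $\gamma\neq 0$. So the marker $f^{\delta}(y)$ sits $\gamma$ steps \emph{behind} $f^{2\delta}(x)$, in some backward branch. Because the $f_i$ are not injective, you cannot cancel $f^{\gamma}$ to get $f^{\delta}(y)=f^{2\delta-\gamma}(x)$. Shrinking coordinates of $\gamma$, or re-applying maximality at $f^{k\delta}(x)$, does not help, since each new witness may again lie in a side branch.

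In fact no completion of this kind can exist. Bounded-to-one-ness enters your argument only to make the auxiliary graph locally finite. But a Borel maximal independent set exists in every locally countable Borel graph (countable Borel coloring plus greedy). Everything else you use (commutativity, freeness, maximality) also holds for the finite-to-one shift $S$ on $[\mathbb N]^{\mathbb N}$ with $n=1$, where your reading of independence agrees with the paper's. Your construction would therefore produce Borel $r$-forward-independent hitting sets for $G_S$ for every $r$. That contradicts Theorem~\ref{thm:singlefunction} together with the fact, recalled in the introduction, that $G_S$ does not have finite Borel asymptotic dimension. The same objection applies to your coordinate-by-coordinate fallback, which already fails at $n=1$.

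Separately, your reading of $r$-forward-independence is too weak for $n\ge 2$. The paper forbids $f^{\alpha}(x)=f^{\beta}(y)$ for distinct $x,y\in H$ whenever $\alpha,\beta\in([0,r]\cap\mathbb N)^n$ have disjoint supports. This excludes, e.g., $f_1^r(x)=f_2^r(y)$ and $f_1^r\cdots f_n^r(x)=y$, and this stronger form is what is used later, for instance in Claim~\ref{counting}. You could patch this by adding such pairs to the auxiliary graph, but the hitting problem would remain.

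The missing idea is the transfer principle the paper uses, which lets one argue as if inverses existed. The paper encodes ``$r$-forward-independent hitting with syndeticity $2n^2r$'' as an LCL $\Pi$ on $\mathbb N^n$, whose local views are forward boxes. It then invokes Theorems~\ref{thm:3.2} and~\ref{thm:3.3} of Naryshkin--Shinko--Weilacher--Yu. Solvability by an $O(\log^* n)$-local algorithm is the same on $\mathbb Z^n$ and on $\mathbb N^n$, and it yields a Borel $\Pi$-labeling of any free bounded-to-one Borel $\mathbb N^n$-action; this is where bounded-to-one is really used.

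For free continuous $\mathbb Z^n$-actions on zero-dimensional spaces, the tension you found disappears. Take a clopen maximal $nr$-independent set, obtained via Linial's algorithm. For a point $z$, the marker near $u=(nr,\dots,nr)\cdot z$ is $\alpha\cdot u$ with $\alpha\in[-nr,nr]^n$. That is, it equals $(nr+\alpha_1,\dots,nr+\alpha_n)\cdot z$, with nonnegative exponents summing to at most $2n^2r$. Conversely, two markers whose $r$-cones meet are at graph distance at most $nr$, hence equal.
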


By combining the above two theorems, we yield another proof of the theorem of Shinko--Weilacher--Yu \cite{NaryshkinShinkoWeilacherYuCommutingFunctions} for $G_{f_1,\dots, f_n}\upharpoonright F(X)$.

The same decomposition method also gives applications to Borel perfect
matchings. This is similar to the method used in the recent work \cite{GaoJacksonKrohneSeward2024},
where it was shown that free Borel actions of $\mathbb Z^n$ admit Borel perfect
matchings for the associated Schreier graphs (\cite[Theorem~6.1]{GaoJacksonKrohneSeward2024}).
In the present noninvertible setting, we obtain the following analogue under
suitable assumptions on the generating functions.

\begin{theorem}
Let $X$ be a standard Borel space and let $I$ be an index set.
Suppose that $\{f_i\}_{i\in I}$ is a family of Borel functions on $X$ such that $f_1\circ f_2=f_2\circ f_1$, $f_1$ is injective and $f_2$ is bounded-to-one and exactly even-to-one.
Then, there exists a Borel perfect matching for $G_{\{f_i\}_{i\in I}}\upharpoonright F(X)$.
\end{theorem}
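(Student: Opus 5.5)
The plan is to reduce the problem to finding perfect matchings inside finite pieces of a Borel marker decomposition, using only edges coming from $f_1$ and $f_2$. Since a perfect matching of a spanning subgraph is a perfect matching of $G_{\{f_i\}_{i\in I}}\upharpoonright F(X)$, the other $f_i$ can be ignored, provided $F(X)$ is invariant under $f_1,f_2$ and their preimages.

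\emph{Step 1 (regions).} Here $f_1$ is injective and $f_2$ is bounded-to-one, and they commute. So, with $n=2$, the preceding theorem on hitting sets gives, for every $r$, a Borel $r$-forward-independent hitting set with syndeticity $8r$ for $G_{f_1,f_2}\upharpoonright F(X)$. The structural tiling theorem then yields, for a $d$ to be fixed later, a smooth Borel subequivalence relation $R_d$ whose classes are rooted regions $A$ with $d_1(A),d_2(A)\in\{d,d+1\}$. Everything below is done uniformly and Borel in the class, say using a Borel linear order on $X$ to break ties. Hence it suffices to produce, for each rooted region $A$, a perfect matching of a suitable finite set of vertices. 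This set is $A$ modified by a Borel boundary adjustment, and the modified sets must still partition $F(X)$.

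\emph{Step 2 (the matching inside a region).} Inside $A$ I would use the following structure.
\begin{itemize}
\item The $f_1$-direction consists of chains (segments of $f_1$-orbits), of length $d$ or $d+1$.
\item The $f_2$-direction has fibers that are exactly even: every point of $A$ hit by $f_2$ has an even number of $f_2$-preimages.
\end{itemize}
The intended matching is as follows.
\begin{itemize}
\item Match each $f_1$-chain of even length internally along $f_1$-edges.
\item Each $f_1$-chain of odd length leaves exactly one unmatched endpoint, say the point at its top in the $f_2$-direction.
\item Pair up these leftover points via $f_2$-edges, by an alternating-path argument.
\end{itemize}
The exact evenness of $f_2$-fibers is what makes the parity count work. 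Consider the $f_2$-preimage tree above a point $y$. Since $f_1$ commutes with $f_2$ and is injective, $f_1$ maps $f_2$-fibers injectively into $f_2$-fibers. I expect this to show that the odd chains come in pairs lying in sibling positions $x,x'$ with $f_2(x)=f_2(x')$. Such a pair can be fixed by rematching along the path $x - f_2(x) - x'$, shifting the $f_1$-chain through $f_2(x)$ by one. This is an augmenting-path correction of length $2$ in the $f_2$-direction.

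\emph{Step 3 (boundary).} The region $A$ need not be closed under taking $f_2$-preimages, since fibers can cross $\partial A$. A sibling pair may therefore be split between two regions. To handle this, choose $d$ even and large, and move a bounded number of boundary points between adjacent regions so that every $f_2$-fiber lies in a single adjusted region. The bounded-to-one hypothesis keeps this set of moved points bounded, so the adjusted regions remain finite and are still defined in a Borel way.

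I expect the main obstacle to be Step 3 together with the parity count in Step 2. The task is to show that after the boundary adjustment each adjusted region has even size and that its sibling pairs can always be resolved; this is where the hypothesis that $f_2$ is exactly even-to-one must be used. If the direct parity argument fails, I would fall back to a Hall-type argument, as in the $\mathbb Z^n$ case of \cite{GaoJacksonKrohneSeward2024}. There one shows that each finite adjusted region, viewed as a bipartite graph, has a perfect matching, using a 2-coloring of $A$ by the parity of the $f_1$-coordinate and the large size of $d$ compared with the boundary.
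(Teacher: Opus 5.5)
Your reduction to $f_1,f_2$ and Step~1 are fine. Step~2, however, has a genuine gap: the proposed parity repair cannot work. The graph $G_{f_1,f_2}\upharpoonright F(X)$ is bipartite. Color $z$ by the parity of $(\alpha_1+\alpha_2)-(\beta_1+\beta_2)$, where $f_1^{\alpha_1}f_2^{\alpha_2}(z)=f_1^{\beta_1}f_2^{\beta_2}(z_0)$ for a fixed $z_0$ in the component; this is well defined on the free part, and every edge changes it.

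Siblings are at distance $2$, so they have the same color. An augmenting path has odd length and joins vertices of opposite colors. Hence no rematching along the path $x,f_2(x),x'$, or along any alternating path, can cover two free siblings. Similarly, both endpoints of an $f_1$-chain with an odd number of points have the same color, and sibling chains share that color.

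So ``even size'' is the wrong invariant. For any matching of a finite region by internal edges, the difference between the numbers of free vertices of the two colors equals the color excess of the region, and rematching cannot change it. Consider a sibling-closed region whose $f_1$-chains all have an odd number of points, which is what your Step~3 produces. Its excess is $\pm\sum_{j=1}^{m}(-1)^jN_j$, where $N_j$ is the number of chains at $f_2$-depth $j$. For $f_2$ exactly $2$-to-$1$ this equals $\pm\frac{2}{3}\bigl((-2)^m-1\bigr)\neq 0$. The defect is of the order of a whole $f_2$-face, so no bounded boundary adjustment repairs it.

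Your Hall-type fallback fails for the same reason: when $f_2$ is $2$-to-$1$, the bottom $f_2$-face of such a region contains more than half of its points, however large $d$ is. Moreover, coloring by the parity of the $f_1$-coordinate is not proper, since $f_2$-edges preserve that coordinate.

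Nor can odd chains be avoided by a better choice of regions. Suppose every region met every $f_1$-line in an interval with an even number of points. Then matching along $f_1$ would give a Borel perfect matching of $G_{f_1}$ alone. This is impossible for the paper's example on $2^{\mathbb Z}\times 2^{\mathbb N}$: fixing the second coordinate would yield a Borel perfect matching, hence a Borel proper $2$-coloring, of the shift graph on the free part of $2^{\mathbb Z}$, which does not exist. The missing idea is therefore a way to cancel the color defects of odd chains using edges between regions.

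The paper proceeds as follows.
\begin{enumerate}
\item It uses the sibling-closed classes $R_d$. These rest on $xR_Dx'$, which comes from the last clause of Theorem~\ref{thm:markers}, and are cut only in the $f_2$-direction, so that $d_1\ge D\gg d+1\ge d_2$.
\item It matches sibling $f_1$-chains in opposite phases, so defects occur only near the $f_1$-faces.
\item It repairs these defects by local rematchings with $f_2$-edges in areas $\mathcal A_m(x)$. These areas are staggered along the long $f_1$-direction so they do not interact, and they leave the status of $f_2(x)$ unchanged.
\item Finally it adds the edges $\{x,f_1(x)\}$ whose endpoints are both still free.
\end{enumerate}
So the matching is not built region by region.

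Your plan also ignores the case of non-surjective $f_1$, where chains of length $d$ or $d+1$ need not exist. The paper handles components containing an $f_1$-ray separately. It matches each ray from its starting point, and it fixes the phase of each bi-infinite $f_1$-line via the nearest ray.
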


It has been shown in \cite[Proposition 2.8]{ConleyMiller2017MeasurablePerfectMatchings} that for any countable-to-one Borel surjection $f$ there exists a Borel perfect matching for $G_f$ on the non-injective part of $f$. In particular, if $\{f_i\}_{i\in I}$ is a family of Borel functions on $X$ with at least one of $f_i$ being a countable-to-one surjection, then there exists a Borel perfect matching for $G_{\{f_i\}_{i\in I}}$ on the non-injective part for this $f_i$. Our theorem above does not assume surjectivity.

Now we can state our result on the Borel edge chromatic number of $G_{f_1,\dots, f_n}$ as an application of our method. 
For Schreier graphs of
free shifts of $\mathbb Z^n$, it was known that the Borel edge chromatic number
coincides with the classical edge chromatic number:
$$
\chi'_B(F(2^{\mathbb Z^n}))=\chi'(F(2^{\mathbb Z^n}))=2n.
$$
This was proved independently in
\cite{Bencs2021FactorIID, GrebikRozhon2023LocalProblems, Weilacher2024, BorelFactorsSubshifts2025}. In fact, these results imply that there are Borel perfect matchings for $F(2^{\mathbb{Z}^n})$. 
For graphs generated by noninvertible functions, however, the lack of inverses
creates difficulties. Using the marker decompositions developed in
this paper, we prove the following upper bound.

\begin{theorem}
    Let $X$ be a standard Borel space. For $n\in\mathbb{N}$, suppose that $f_1,\cdots,f_n:X\to X$ are $k$-to-1 commuting Borel functions.
 Then $\chi'_B(G_{f_1,...,f_n}\upharpoonright F(X))\leq (n+1)(k+3)-5.$
\end{theorem}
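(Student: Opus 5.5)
The plan is to combine the rooted marker decomposition (the structural tiling theorem) with the existence of forward-independent hitting sets for bounded-to-one functions (the third theorem above). Together they give, for suitable $d$, a smooth Borel subequivalence relation $R_d$ whose classes are rooted regions $A$ with $d_i(A)\in\{d,d+1\}$ for every $i$. The edges of $G_{f_1,\dots,f_n}\upharpoonright F(X)$ then fall into two kinds: \emph{internal} edges, with both endpoints in one $R_d$-class, and \emph{boundary} edges, joining different classes. Each region is finite. This holds because the $f_i$ are $k$-to-$1$ and the region is contained in a bounded neighborhood of its root in the forward and backward directions, so there are only finitely many edges. Since $R_d$ is smooth, I can choose a Borel selector and fix a Borel linear order on each class. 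Any coloring defined on a finite region by a rule that depends only on the isomorphism type of the region, with the root and order marked, is therefore Borel.

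First I color the boundary edges. The key geometric claim is that, for $d$ large, the boundary edges form a graph of small degree: a vertex $x$ meets only boundary edges in the directions where $x$ lies on a face of its region. I would try to show that the boundary edges can be oriented so that the graph they form behaves like a graph generated by few functions. Then the Kechris--Solecki--Todorcevic bound, namely a greedy Borel coloring over a Borel proper coloring of the square of the graph, gives about $2(k+1)-1$ colors for boundary edges per "face direction". Alternatively, I would use a bounded palette reserved for them, chosen so that the colors of the faces of different directions are shared. The target count $(n+1)(k+3)-5$ suggests the following split. The $n$ directions each contribute $k+3$ colors in total: degree $k+1$ per direction, plus a slack of $2$. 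One extra block handles the boundary, and $-5$ comes from overlaps.

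Next I extend to internal edges. Each region is a finite graph in which every vertex has degree at most $n(k+1)$. So I color the internal edges of each region using the finite Vizing-type bound, or a direct argument along the coordinate structure. In the latter, the edges in direction $i$ form a forest of stars and paths of out-degree $1$ and in-degree $\le k$, which is $(k+1)$-edge-colorable, with a couple of extra colors to resolve conflicts with the already colored boundary edges at face vertices. This is done canonically from the marked isomorphism type, so the coloring is Borel.

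I expect the main obstacle to be the interface between regions. A boundary edge at a vertex $x$ competes with up to $n(k+1)-1$ other edges at $x$, so the boundary colors must be arranged so that the internal coloring still has enough free colors at face vertices. I would first try to precolor all boundary edges with a small fixed palette of roughly $k+3$ colors, using the fact that boundary edges in direction $i$ at a vertex number at most $k+1$ (one forward edge and at most $k$ backward edges). I would then check that the remaining $n(k+3)-5$ colors suffice to greedily finish each region, processing directions one at a time. The exact constant would be tuned at this step.
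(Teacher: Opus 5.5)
Your framework is the paper's: rooted markers from Theorem~\ref{thm:rooted-markers}, with the hypothesis supplied by Theorem~\ref{thm:Weilacher}, then crossing edges colored first, then a canonical finite extension inside each class. But the proposal stops where the content of the theorem lies. You give no scheme that reaches $(n+1)(k+3)-5$ (``the exact constant would be tuned''), and the splits you suggest cannot work.

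Take $y\in A$ with $f_1^{\alpha_1}\cdots f_n^{\alpha_n}(y)=\operatorname{root}(A)$ and $\alpha_i=\diam_{f_i}(A)$ for every $i$. Then all backward edges at $y$ cross. For the coordinate shifts on the free part of $(k^{\mathbb N})^n$, this gives $nk$ crossing edges at $y$. Consequences:
\begin{itemize}
\item A single shared boundary palette of about $k+3$ colors fails once $(n-1)k>3$.
\item Using $2k+1$ colors per direction already exceeds the whole budget once $(n-1)(k-2)>0$.
\item No scheme with disjoint boundary and interior palettes can work. The boundary needs at least $nk$ colors, which leaves at most $k+3n-2$. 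For the large $d$ you intend, an interior vertex carries $n(k+1)$ internal edges. So for $n\ge 2$ and $k\ge 3$, boundary colors \emph{must} be reused inside the regions.
\end{itemize}
Vizing's theorem does not extend a precoloring. A plain greedy pass only gives the $2n(k+1)-1$ bound that the theorem improves.

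The paper supplies two ideas you are missing.

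First, since $d_i(A)\ge 1$, the crossing $f_i$-edges at a vertex are either its single forward edge or backward edges from a single fiber, never both. So there are at most $k$ of them, not $k+1$ as you count. Give each crossing edge $\{x,f_i(x)\}$ the color indexed by the position of $x$ in $f_i^{-1}(f_i(x))$, drawn from a private palette $\Gamma_i^\partial$ with $|\Gamma_i^\partial|=k$. This coloring is proper and Borel.

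Second, Lemma~\ref{lem:localcolor} extends the coloring inward by induction on $n$.
\begin{itemize}
\item The internal $f_n$-edges are split into layers $E_0,\dots,E_{n-1}$ according to which backward faces $F_1^-(A),\dots,F_{n-1}^-(A)$ contain them. An internal $f_n$-edge meets $F_i^-(A)$ iff it lies in it, so parent, sibling and child edges stay in one layer.
\item Each layer is colored greedily from the top of the $f_n$-trees downward.
\item A layer $E_m$ with $m\ge 1$ avoids $F_m^-(A)$, so it may reuse $\Gamma_m^\partial$ together with a few auxiliary colors.
\item The invariant is that a color of $\Gamma_i^\partial$ appears only on crossing $f_i$-edges or on internal $f_j$-edges with $j>i$. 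This keeps the number of forbidden colors at most $k+2(n-m)$ against a palette of size $k+2(n-m)+1$.
\end{itemize}
This controlled reuse is what saves $(n-1)(k-1)$ colors over the Kechris--Solecki--Todorcevic bound. Without an argument of this kind, your plan does not establish the stated inequality.
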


\medskip
\noindent\textbf{Organization.}
The rest of the paper is organized as follows. Section~2 collects the necessary background
from descriptive combinatorics and Borel asymptotic dimension. In Section~3 we prove the existence of $r$-forward-independent hitting sets in the context of bounded-to-one commuting Borel functions. Section~4 proves
the structural decomposition theorems. Section~5 presents applications to
finite Borel asymptotic dimension, hyperfiniteness, Borel perfect matchings,
and Borel edge colorings.

\medskip
\noindent\textbf{Acknowledgments.} We thank Forte Shinko and Felix Weilacher for helpful discussions on the topic of the paper. Specifically, they pointed out a mistake in an earlier version of this paper and suggested Definition~\ref{def:fBasd} and Lemma~\ref{lem:hyp}. Weilacher also suggested all materials in Section 3.

%---------------------------
\section{Preliminaries}

In this section we collect several standard tools from descriptive combinatorics that will be used throughout the paper. 

\subsection{Basics in descriptive set theory}

We begin with some standard terminology from descriptive set theory. A standard reference is \cite{Kechris1995}.

A topological space $X$ is \emph{Polish} if it is separable and completely metrizable.
A \emph{standard Borel space} is a measurable space $(X,\mathcal{B})$ such that $\mathcal{B}$ is the $\sigma$-algebra generated by some Polish topology on $X$, and sets in $\mathcal{B}$ are called \emph{Borel sets}.

A function on a standard Borel space is said to be a \emph{Borel function} if the preimage of every Borel set is also Borel.
Let $X$ be a standard Borel space. A Borel function $f\colon X\to X$ is \emph{countable-to-one} (respectively, \emph{finite-to-one}) if for each $x\in X$, $f^{-1}(x)$ is countable (respectively, finite). Moreover, a finite-to-one function $f\colon X\to X$ is \emph{bounded-to-one} if there is $N\in \mathbb{N}^+$ such that $|f^{-1}(x)|\leq N$ for all $x\in X$. We work with the following refinements of the notion of bounded-to-one functions.
\begin{definition}
   Let $X$ be a standard Borel space, $k\in \mathbb{N}^+$, and let $f: X\to X$. We say that $f$ is \emph{$k$-to-1} (or \emph{$\leq\!k$-to-1} as in \cite{KechrisSoleckiTodorcevic1999}) if $|f^{-1}(x)|\le k$ for every $x\in X$. 
   We say that $f$ is \emph{exactly $k$-to-1} if $|f^{-1}(x)|= k$ for every $x\in X$. 
   We say that $f$ is \emph{exactly even-to-one} if $|f^{-1}(x)|$ is a finite even number for every $x\in X$.
\end{definition}

%There is a theorem in \cite{Kechris1995} as follows, showing that if each section of  a Borel set is countable,  then it can be written as a countable union of Borel functions.
%\begin{lemma}\label{lem:uniform}
%   Let $X,Y$ be standard Borel spaces and let $P\subseteq X\times Y$ be Borel. If every section $P_x$ is countable, then $P$ has a Borel uniformization and therefore $\text{proj}_X(P)$ is Borel.

%    Moreover, $P$ can be written as $\bigcup_n P_n$, where each $P_n$ is a Borel graph.
%\end{lemma}

An equivalence relation $E$ on a standard Borel space $X$ is called a \emph{countable} (respectively, \emph{finite}) \emph{Borel equivalence relation} if $E$ is a Borel subset of $X\times X$ and each equivalence class is countable (respectively, finite). A countable Borel equivalence relation is said to be \emph{hyperfinite} if it can be written as an increasing union of finite Borel equivalence relations. 

Let $E$ be a countable Borel equivalence relation on a standard Borel space $X$. A \emph{selector} for $E$ is a function $s\colon X\to X$ such that for any $(x,y)\in E$, $s(x)=s(y)Ex$. We say that $E$ is \emph{smooth} if there is a Borel selector for $E$. Any finite Borel equivalence relation is smooth.

An equivalence relation $E$ is \emph{hypersmooth} if $E$ can be written as an increasing union of smooth Borel equivalence relation. Any hypersmooth countable Borel equivalence relation is hyperfinite (\cite[Theorem 5.1 (3)]{DoughertyJacksonKechris1994}).

\subsection{Borel graphs and combinatorial notions}
We recall some standard notions from Borel graph combinatorics. A standard reference is \cite{KechrisMarks2020}.

Let $X$ be a standard Borel space, and let $G=(X,R)$ be a graph on $X$.
We say that $G$ is a \emph{Borel graph} if $R\subseteq X\times X$ is a Borel, symmetric, and irreflexive relation. A Borel graph $G$ is \emph{locally countable} (respectively, \emph{locally finite}) if the degree of every vertex is contable (respectively, finite). 

For any graph $G$ we let $\chi(G)$ denote the chromatic number of $G$ and $\chi'(G)$ denote the edge chromatic number of $G$. The following definitions are their Borel counterparts.

\begin{definition}
Let $G=(X,R)$ be a Borel graph. 
\begin{enumerate}
\item A \emph{ proper Borel coloring} of $G$ is a Borel function $c\colon X\to Y$ with $Y$ a Polish space such that 
$$
xRy \ \Longrightarrow\ c(x)\neq c(y)\qquad \text{for }x,y\in X.
$$
The minimum of $\{|Y|\colon \mbox{there is a proper coloring $c\colon X\to Y$} \}$  is called the \emph{Borel chromatic number} of $G$, denoted by $\chi_B(G)$.
\item A \emph{proper Borel edge coloring} of $G$ is a
Borel function $c:R\to Y$  with $Y$ a Polish space such that 
$$
e_1\cap e_2\neq\varnothing \ \Longrightarrow\ c(e_1)\neq c(e_2)\qquad \text{for }e_1,e_2\in R.
$$
The minimum of $\{|Y|\colon \mbox{there is a proper edge coloring $c\colon R\to Y$}\}$ is called the \emph{Borel edge chromatic number} of $G$, denoted by $\chi'_B(G)$.
\end{enumerate}
\end{definition}

Any locally countable Borel graph has countable Borel edge chromatic number (\cite[Proposition 4.10]{KechrisSoleckiTodorcevic1999}).

Another central notion in classical combinatorics is perfect matching. We shall use the following Borel version.

\begin{definition}
Let $X$ be a standard Borel space and $G=(X,R)$ a Borel graph. A \emph{Borel partial matching} is a Borel subset of edges $P\subseteq R$ such
that every vertex of the graph $(X,P)$ has degree at most $1$. We write
$$
\dom(P)\ :=\ \{x\in X:\exists y\in X,\,\{x,y\}\in P\}
$$
 for the set of vertices matched by $P$. Moreover, we say that $P$ is a \emph{Borel perfect matching} if $\dom(P)=X$, or equivalently, if every vertex of the graph $(X, P)$ has degree exactly $1$.
\end{definition}

All graphs considered in this paper are generated by finitely many Borel
functions. We now fix some notation related to such graphs.

\begin{definition}
Let $X$ be a standard Borel space, and let
$f_1,\ldots,f_n:X\to X$
be Borel functions. We denote by
$G_{f_1,\ldots,f_n}=(X,R)$
the undirected graph generated by $f_1,\ldots,f_n$, where
$$
xRy
 \Longleftrightarrow 
x\neq y
\ \text{ and }\
\exists\,1\le i\le n\ \bigl[\, f_i(x)=y \mbox{ or } f_i(y)=x\,\bigr].
$$
The connectedness relation of $G_{f_1,\dots, f_n}$ is denoted by
$E_{f_1,\ldots,f_n}$, i.e., $(x,y)\in E_{f_1,\dots, f_n}$ if $x$ and $y$ belong to the same connected component of $G_{f_1,\dots, f_n}$.

\end{definition}

Similarly to the free part of a group action, we can also define the free part of the monoid action given by functions.

\begin{definition}
Let $X$ be a standard Borel space, and let
$f_1,\ldots,f_n:X\to X$
be commuting Borel functions. The \emph{free part} of the semigroup action
generated by $f_1,\ldots,f_n$ is
$$
\begin{aligned}
F(X):=
\bigl\{x\in X:\ &
\forall \alpha_1,\ldots, \alpha_n, \beta_1,\ldots, \beta_n\in\mathbb N,\\
&
(f_1^{\alpha_1}\circ\cdots\circ f_n^{\alpha_n})(x)
=
(f_1^{\beta_1}\circ\cdots\circ f_n^{\beta_n})(x)
\Longrightarrow
\alpha_i=\beta_i
\text{ for every }1\le i\le n
\bigr\}.
\end{aligned}
$$

\end{definition}
For notational simplicity, whenever there is no danger of confusion, we will write $f_1^{\alpha_1}\cdots f_n^{\alpha_n}(x)$ for
$(f_1^{\alpha_1}\circ\cdots\circ f_n^{\alpha_n})(x)$.

We next introduce two notions of forward marker sets. They will be used to construct
Borel sets which are sparse in the forward directions, but which 
can still be reached within a uniformly bounded number of forward steps from any point in the space. For the case of one function, these notions were introduced and studied in \cite{GrebikHiggins2026}.

\begin{definition}
Let $X$ be a standard Borel space, let
$f_1,\ldots,f_n:X\to X$
be commuting Borel functions, and let $M\in\mathbb N$, $r\in\mathbb N^+$. 
\begin{enumerate}
\item A Borel set
$H\subseteq F(X)$ is \emph{hitting} for
$G_{f_1,\ldots,f_n}\upharpoonright F(X)$ if, for every $x\in F(X)$, there exist
$\alpha_1,\ldots, \alpha_n\in\mathbb N$ such that
$f_1^{\alpha_1}\cdots f_n^{\alpha_n}(x)\in H$. Moreover, we say that $H$ has \emph{syndeticity} $M$ if for every $x\in F(X)$ there exist
$\alpha_1,\ldots, \alpha_n\in\mathbb N$ with $\sum_{i=1}^n \alpha_i\leq M$ such that
$f_1^{\alpha_1}\cdots f_n^{\alpha_n}(x)\in H$.
\item
 A Borel set
$H\subseteq F(X)$ is \emph{$r$-forward-independent} for
$G_{f_1,\ldots,f_n}\upharpoonright F(X)$ if, for all distinct $x,y\in H$, we have
$f_1^{\alpha_1}\cdots f_n^{\alpha_n}(x)
\neq
f_1^{\beta_1}\cdots f_n^{\beta_n}(y)$
whenever
$\alpha_i, \beta_i \in [0,r]\cap \mathbb{N}$
and
$\alpha_i\beta_i=0$
for every $1\le i\le n$. 
\end{enumerate}
\end{definition}

We will use the following result about the existence of Borel quai-kernels from \cite{Wang2026}.

\begin{lemma}[{\cite[Theorem 1.2]{Wang2026}}]
\label{lem:qk}
Let $X$ be a standard Borel space and let $\vec{G}=(X,\vec{R})$ be a locally countable Borel directed graph with finite Borel chromatic number
$\chi_B(\vec{G})<\infty$.
Then there exists a Borel set $X'\subseteq X$ which is a \emph{quasi-kernel} of $\vec{G}$, that is:
\begin{enumerate}[label=(\roman*),leftmargin=2.7em]
\item \textup{(Independent)} For all distinct $x,y\in X'$, neither $x\,\vec{R}\,y$ nor $y\,\vec{R}\,x$ holds.
\item \textup{(Hitting with syndeticity $2$)} For every $y\in X\setminus X'$ there exist $x\in X'$ and $z\in X$ such that
$$
y\,\vec{R}\,x
\mbox{ or }
(y\,\vec{R}\,z \mbox{ and } z\,\vec{R}\,x).
$$
\end{enumerate}
\end{lemma}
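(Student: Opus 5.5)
Since this lemma is quoted from \cite{Wang2026}, I would reprove it by making the classical Chv\'atal--Lov\'asz argument Borel. The finite proof processes the vertices along a linear order in two greedy passes, one forward and one backward. Here a Borel proper coloring $c\colon X\to\{1,\dots,k\}$ of the underlying undirected graph replaces the linear order. Each color class is independent, so a greedy decision for all vertices of one color can be made simultaneously and only ever refers to already-decided vertices of other colors. Since $\vec G$ is locally countable, each stage is Borel, by the Lusin--Novikov uniformization theorem: a statement of the form ``some neighbor of $x$ lies in a Borel set'' defines a Borel set of $x$. There are only $k$ colors, so each pass has finitely many stages and produces a Borel set.

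\emph{First pass (building an absorbing independent set).} For $j=1,\dots,k$ in turn, put a vertex $x$ of color $j$ into $A$ if no vertex already in $A$ is an in- or out-neighbor of $x$. The resulting $A$ is independent, and every $y\notin A$ is adjacent to some $a\in A$ (in one direction or the other). To orient these adjacencies correctly I would instead use the Chv\'atal--Lov\'asz variant. In that variant, vertices of $X\setminus A$ that only receive edges from $A$ are peeled off, and the first pass is rerun on the remaining set $Z$, the set of vertices having no out-edge into $A$ and no edge from $A$. This nesting is again organized by colors, so it stays within finitely many Borel stages.

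\emph{Second pass (pruning in reverse order).} For $j=k,\dots,1$, keep $x\in A$ of color $j$ in $Q$ exactly when $x$ has no out-neighbor among the already-kept vertices of larger color. Then $Q\subseteq A$ is independent, and every $a\in A\setminus Q$ has an out-edge $a\to q$ with $q\in Q$. Consequently, any $y$ with $y\to a$ for some $a\in A$ reaches $Q$ in at most two steps: if $a\in Q$ directly, and otherwise via $y\to a\to q$. This gives conditions (i) and (ii) for all such $y$.

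The main obstacle is the vertices $y\notin A$ all of whose edges to $A$ point from $A$ to $y$. In the finite proof these are handled by induction on the graph minus the closed out-neighborhood. In the Borel setting one cannot induct over all vertices, so the first pass must be designed so that no such vertex survives. Concretely, the selection rule at each color stage must be asymmetric: $x$ enters $A$ only if it has no out-neighbor in $A$, and earlier members of $A$ that are in-neighbors of a later candidate are handled so that the later candidate becomes absorbed. Verifying that this asymmetric color-by-color rule still yields an independent set in which every outside vertex has an out-edge into $A$ or is two-step absorbed after pruning is where I expect the real work to lie. I would first check the argument for $k=2$ and then proceed by induction on the number of colors, passing to the subgraph induced on the not-yet-absorbed vertices, which is still properly colored with fewer active colors at each step.
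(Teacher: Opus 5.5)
There is a genuine gap, and it is exactly the step you defer as ``the real work.'' The paper gives no proof of this lemma; it only quotes it from \cite{Wang2026}.

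Your first pass as written is symmetric, so $A$ is a maximal independent set. For such an $A$ your second pass is vacuous: no vertex of $A$ has an out-neighbor in $A$, so $Q=A$. The vertices $y\notin A$ whose only edges to $A$ point from $A$ to $y$ are then never absorbed. Your proposed repair still asks for an \emph{independent} $A$ in which every outside vertex is absorbed in one or two steps ``after pruning.'' Since pruning an independent set does nothing, this amounts to asking that $A$ already be a quasi-kernel, so it is circular. The peeling and rerunning on $Z$, and the induction on the number of colors, are never specified. Relatedly, your claim that the pruned set $Q\subseteq A$ is independent has no justification. It holds only if $A$ is independent (the vacuous case) or if the edges inside $A$ are oriented compatibly with the color order, which you never arrange.

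The missing idea is to give up independence of $A$ altogether. In the first pass, use only the one-sided rule. For $j=1,\dots,k$, a vertex $x$ with $c(x)=j$ enters $A$ iff $x$ has no out-neighbor in the part of $A$ built so far; in-neighbors already in $A$ are simply ignored. Two facts then follow.
\begin{enumerate}
\item[(a)] Every $y\notin A$ has an out-neighbor in $A$, since that is why it was rejected. So your ``main obstacle'' never occurs.
\item[(b)] If $x,y\in A$ and $x\,\vec R\,y$, then $c(x)<c(y)$. Otherwise $y$ was already in $A$ when $x$ was tested, and $x$ would have been rejected.
\end{enumerate}
Fact (b) is precisely what your second pass, which is correct as stated, needs. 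If $x,y\in Q$ with $x\,\vec R\,y$, then $c(x)<c(y)$, so $y$ had already been kept when $x$ was examined, and $x$ would have been discarded. Thus $Q$ is independent, and every $a\in A\setminus Q$ has an out-edge into $Q$. Together with (a), every $y\notin Q$ reaches $Q$ in at most two steps.

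Each of the $2k$ stages is Borel by Lusin--Novikov, as you say, so nothing beyond these two passes is needed. This is just the linear-order proof of Chv\'atal--Lov\'asz, with the color classes playing the role of the order.
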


\subsection{Borel asymptotic dimension}

We next recall the notion of Borel asymptotic dimension and several consequences
that will be used later. Our formulation follows the framework of
\cite{ConleyJacksonMarksSewardTuckerDrob2023}.

Let $X$ be a standard Borel space. A function $\rho\colon X\times X\to [0, +\infty]$ is called an \emph{extended metric} if $\rho$ can take the value $+\infty$ but otherwise satisfies the defining properties of a metric. If $(X, \rho)$ is a standard Borel space equipped with an extended metric, then the \emph{finite-distance equivalence relation} is defined as $E_\rho=\{(x,y)\in X\times X\colon \rho(x,y)<+\infty\}$. For any graph $G=(X, R)$, we define the \emph{graph metric} $\rho_G$ on $X$ by letting $\rho_G(x,y)$ be the length of the shortest path in $G$ from $x$ to $y$. The graph metric $\rho_G$ is an extended metric and the finite-distance equivalence relation $E_{\rho_G}$ is exactly the connectedness relation of $G$. If $G$ is a Borel graph, then $\rho_G$ is a Borel extended metric on $X$.

A Borel extended metric $\rho$ on $X$
is called \emph{proper} if, for every $x\in X$ and every $r<\infty$, the ball
$B_\rho(x,r):=\{y\in X:\rho(x,y)\le r\}$
is finite. In particular, if $G$ is a locally finite Borel graph, then its
graph metric is a proper Borel extended metric.

In the following we recall the notion of finite Borel asymptotic dimension from \cite{ConleyJacksonMarksSewardTuckerDrob2023} and define a strengthening of it which we will use in the rest of this paper. The definition of the strengthened notion and the lemma following the definition were suggested by Shinko and Weilacher in  private communications.

\begin{definition}\label{def:fBasd}
Let $m\in\mathbb{N}$ and let $(X,\rho)$ be a standard Borel space equipped with a Borel extended
metric. 
\begin{enumerate}
\item[(1)] We say that $(X,\rho)$ has \emph{Borel asymptotic dimension at most
$m$} if, for every $r>0$, there exists a Borel equivalence relation $E_r$
on $X$ such that:
\begin{enumerate}[label=(\roman*),leftmargin=2.7em]
\item every $E_r$-class has uniformly bounded $\rho$-diameter, i.e., there is $d>0$ such that for every $E_r$-class $C$ and $x, y\in C$, $\rho(x,y)\leq d$;

\item for every $x\in X$, the ball
$B_\rho(x,r)$
meets at most $m+1$ many $E_r$-classes.
\end{enumerate}
We say that $(X,\rho)$ has \emph{finite Borel asymptotic dimension} if it has
Borel asymptotic dimension at most $m$ for some $m<\infty$. We write this as
$\operatorname{asdim}_B(X,\rho)<\infty$. 
\item[(2)] We say that $(X,\rho)$ has \emph{strong Borel asymptotic dimension at most
$m$} if, in part (1) of this definition, the equivalence relation $E_r$ is required to be smooth. 
We say that $(X,\rho)$ has \emph{finite strong Borel asymptotic dimension} if it has
strong Borel asymptotic dimension at most $m$ for some $m<\infty$. We write this as
$\operatorname{asdim}^*_B(X,\rho)<\infty$.
\end{enumerate}
\end{definition}

It is clear from the definition that if $\operatorname{asdim}^*_B(X, \rho)<\infty$, then $\operatorname{asdim}_B(X, \rho)<\infty$. Since finite Borel equivalence relations are smooth, the notion of Borel asymptotic dimension and that of strong Borel asymptotic dimension are the same if $(X, \rho)$ is proper.

The following result is a consequence of results from \cite{ConleyJacksonMarksSewardTuckerDrob2023}. It establishes finite strong Borel asymptotic dimension as a useful tool for proving hyperfiniteness.  

\begin{lemma} \label{lem:hyp}
Let $G=(X,R)$ be a locally countable Borel graph, and let $\rho_G$ be its graph
metric. If $\operatorname{asdim}^*_B(X,\rho_G)<\infty$,
then the connectedness relation of $G$ is hyperfinite.
\end{lemma}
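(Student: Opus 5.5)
We will show that $E_G$, the connectedness relation of $G$, is hypersmooth. Since $E_G$ is a countable Borel equivalence relation (because $G$ is locally countable), \cite[Theorem 5.1(3)]{DoughertyJacksonKechris1994} will then give hyperfiniteness. The plan is to adapt the proof in \cite{ConleyJacksonMarksSewardTuckerDrob2023} that finite Borel asymptotic dimension implies hyperfiniteness. In that proof, finiteness of the classes of the $E_r$ is used in only one place: to make the resulting approximating relations finite and hence Borel with a Borel selector. We will replace that use by smoothness.

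Fix $m$ with $\operatorname{asdim}^*_B(X,\rho_G)\le m$. First, we fix a sequence $r_0<r_1<\cdots$ chosen inductively. For each $k$ we take a smooth Borel $E_{r_k}$ whose classes have $\rho_G$-diameter at most $d_k$ and such that every ball $B(x,r_k)$ meets at most $m+1$ classes, and we choose $r_{k+1}\gg d_k$.

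Second, we run the standard construction from \cite{ConleyJacksonMarksSewardTuckerDrob2023}, the one deriving hyperfiniteness from bounded-diameter covers with multiplicity $\le m+1$. It takes Borel colorings of the "cluster" structures into $m+1$ colors, and at stage $k$ it merges classes of color $c$ along each class of $E_{r_k}$, taking transitive closures of finitely many previously built relations. This produces an increasing sequence of Borel equivalence relations $F_k$ with $\bigcup_k F_k=E_G$. In the present setting each $F_k$ is a union of boundedly many "layers", and each layer is obtained from smooth relations by finitely many steps of the form: restrict to a Borel set, take the join along a bounded-diameter neighborhood relation. A $\rho_G$-bounded-distance condition between classes of a smooth relation with countable classes is a Borel condition. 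This holds because locally countable $G$ lets us parametrize neighborhoods by Lusin--Novikov, even though the balls may be infinite.

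Third, we verify that each $F_k$ is smooth. The idea is that every $F_k$-class is contained in a class of some smooth $E_{r_j}$ (or in a union of boundedly many such classes, linked in a bounded pattern). We then choose a Borel selector by a two-step procedure. First apply the Borel selectors $s_j$ of the $E_{r_j}$ to the finitely many relevant $E_{r_j}$-classes. Then pick a canonical element, for instance the least under a fixed Borel linear order of the finite set of selected representatives. Smoothness is preserved because a countable Borel equivalence relation contained in a smooth one, with each class meeting only finitely many classes of the smooth one in a Borel-definable way, is smooth. Hence $E_G=\bigcup_k F_k$ is hypersmooth.

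The main obstacle is the second step. The \cite{ConleyJacksonMarksSewardTuckerDrob2023} construction implicitly uses that classes are finite to guarantee Borelness of the merged relations and to control their class structure, and here we must check each such use against infinite, non-proper balls. If direct adaptation fails, we will try a cruder route instead: show that $E_G$ is Borel reducible to a hypersmooth relation by the map $x\mapsto (s_k(x))_k$ into $X^{\mathbb N}$ modulo tail equivalence. This requires arranging, via the largeness of the $r_{k+1}$, that $x\,E_G\,y$ iff $x\,E_{r_k}\,y$ for all large $k$. That is false in general, since classes are bounded, so the multiplicity-$(m+1)$ coloring trick is still needed to build nested relations.
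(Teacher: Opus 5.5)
Your opening diagnosis is correct: the only change needed to the argument of \cite{ConleyJacksonMarksSewardTuckerDrob2023} is to use smoothness where finiteness was used. But the proof as proposed has a genuine gap in your second step. You assert that a ``standard construction'' produces increasing smooth relations $F_k$ with $\bigcup_k F_k=E_G$. You never define the $F_k$, and the construction you allude to does not give this. Your third step needs each $F_k$ to have classes of uniformly bounded $\rho_G$-diameter. Under that constraint, nothing forces two points of the same $E_G$-class to be merged at some finite stage.

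For instance, for a free $\mathbb{Z}$-action, a nested sequence of Borel interval partitions can keep a cut between some $x$ and $x+1$ at every stage. The union then splits that orbit into two half-lines. The multiplicity bound controls \emph{how many} pieces survive, not that they eventually coalesce. Your own fallback paragraph concedes that exhaustion fails, and the ``coloring and merging'' description does not replace it. So the smoothness discussion in your third step (classes ``linked in a bounded pattern'') has no concrete object to apply to.

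The missing idea is that you should not aim for $\bigcup_k F_k=E_G$ at all. The paper builds smooth relations $F_0\subseteq F_1\subseteq\cdots$ as follows.
\begin{enumerate}
\item Given $F_{n-1}$ with Borel selector $s$, choose $r_n\ge n$ bounding the diameters of the $F_{n-1}$-classes.
\item Take a smooth $E$ with bounded-diameter classes such that every ball $B(x,2r_n)$ meets at most $m+1$ $E$-classes.
\item Put $x\,F_n\,y\iff s(x)\,E\,s(y)$.
\end{enumerate}
This gives the following properties.
\begin{itemize}
\item Nesting is automatic.
\item $F_n$ is smooth because $s$ Borel-reduces it to $E$.
\item Diameters stay bounded.
\item For $y\in B(x,r_n)$ we have $s(y)\in B(x,2r_n)$, so $B(x,r_n)$ meets at most $m+1$ $F_n$-classes.
\end{itemize}

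Then $F_\infty=\bigcup_n F_n$ is hypersmooth, hence hyperfinite by \cite[Theorem 5.1 (3)]{DoughertyJacksonKechris1994}. Since $r_n\to\infty$, among any $m+2$ points of one $E_G$-class, two are already $F_n$-equivalent for large $n$. So each $E_G$-class is a union of at most $m+1$ $F_\infty$-classes.

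The conclusion then comes from \cite[Proposition 1.3 (vii)]{JacksonKechrisLouveau2002}: a countable Borel equivalence relation containing a hyperfinite subrelation of finite index is hyperfinite. This finite-index step is exactly what replaces your unproved exhaustion claim.
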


\begin{proof} 
The proof is essentially the same as that of \cite[Theorem 7.1]{ConleyJacksonMarksSewardTuckerDrob2023}. Let $(X, \rho_G)$ have strong Borel aysmptotic dimension at most $m$. We inductively define $r_n\geq n$ and an increasing sequence of smooth Borel equivalence relations $F_n$ so that every $F_n$-class has uniformly bounded $\rho$-diameter and for every $x\in X$, the ball $B_\rho(x, r_n)$ meets at most $m+1$ many $F_n$-classes. To begin, we set $r_0=0$ and let $F_0$ be the equality relation on $X$. Assume inductively that for $n\geq 1$, $r_{n-1}$ and $F_{n-1}$ have been defined. Since $F_{n-1}$ is smooth, we have a Borel selector $s\colon X\to X$ for $F_{n-1}$. Choose $r_n\geq n$ so that for all $x\in X$, the $F_{n-1}$-class containing $x$ has $\rho$-diameter at most $r_n$. By the strong Borel asymptotic dimension assumption, there is a smooth Borel equivalence relation $E$ such that every $E$-class has uniformly bounded $\rho$-diameter and for every $x\in X$, $B_\rho(x, 2r_n)$ meets at most $m+1$ many $E$-classes. Let $t\colon X\to X$ be a Borel selector for $E$. Define $F_n=\{(x,y)\in X\times X\colon s(x)Es(y)\}$. Then $t\circ s$ is a Borel selector for $F_n$, and thus $F_n$ is smooth. It is easy to see that $F_{n-1}\subseteq F_n$ and that every $F_n$-class has uniformly bounded $\rho$-diameter. Moreover, for any $x\in X$, the number of $F_n$-classes meeting $B(x, r_n)$ is bounded by the number of $E$-classes meeting $B(x, 2r_n)\supseteq s(B(x, r_n))$, which is at most $m+1$. 

Now let $F_\infty=\bigcup_n F_n$. Then $F_\infty$ is hypersmooth. Since $F_\infty$ is a countable Borel equivalence relation, it follows that $F_\infty$ is hyperfinite \cite[Theorem 5.1 (3)]{DoughertyJacksonKechris1994}. Finally, $F_\infty$ is a subequivalence relation of the connectedness relation of $G$ with index $m+1$. By \cite[Proposition 1.3 (vii)]{JacksonKechrisLouveau2002}, the connectedness relation of $G$ is also hyperfinite.
\end{proof}

Besides hyperfiniteness, finite Borel asymptotic dimension also gives useful
bounds for Borel chromatic numbers. We shall use the following consequence.

\begin{lemma}[{\cite[Corollary 8.2]{ConleyJacksonMarksSewardTuckerDrob2023}}] \label{lem:color}
Let $X$ be a standard Borel space, and let $G$ be a locally finite Borel
graph on $X$ with graph metric $\rho_G$. If
$\operatorname{asdim}_B(X,\rho_G)<\infty$,
then
$\chi_B(G)\le 2\chi(G)-1$.
\end{lemma}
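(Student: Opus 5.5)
This statement is quoted from \cite[Corollary~8.2]{ConleyJacksonMarksSewardTuckerDrob2023}. If I had to reprove it, I would go through a two-piece Borel partition into bounded regions and then glue finite colorings. One step below, the gluing, is not worked out; I only say what I would try there.

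\textbf{Setup.} We may assume $k:=\chi(G)<\infty$, since otherwise there is nothing to prove.

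\textbf{Step 1: a Borel two-piece partition.} From $\operatorname{asdim}_B(X,\rho_G)\le m$ and a scale $r$ chosen large, I would produce a Borel set $A\subseteq X$ with three properties:
\begin{itemize}
\item the connected components of $G\upharpoonright A$ have uniformly bounded diameter;
\item distinct components of $G\upharpoonright A$ are at $\rho_G$-distance greater than $2$;
\item every component of $G\upharpoonright (X\setminus A)$ also has uniformly bounded diameter.
\end{itemize}
This is the asymptotic-dimension analogue of a two-layer ``toast''. I would build it by iterating over the $m+1$ colours of the cover: take the equivalence relations $E_{r_0},E_{r_1},\dots$ at a rapidly increasing sequence of scales, and shrink classes by fixed margins so that the surviving pieces are pairwise far apart and their complement breaks into bounded pieces. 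Because $G$ is locally finite, every bounded set is finite, so all the pieces are finite Borel-indexed sets.

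\textbf{Step 2: colour the complement.} Each component $D$ of $X\setminus A$ is finite. Fix a Borel linear order on $X$ and use Lusin--Novikov to choose, in a Borel way, the lexicographically least proper $k$-colouring $\theta_D$ of $G\upharpoonright D$. On $X\setminus A$:
\begin{itemize}
\item a vertex $v$ with $\theta_D(v)\ge 2$ receives the new colour $k+\theta_D(v)-1$, which lies in $\{k+1,\dots,2k-1\}$;
\item the vertices with $\theta_D(v)=1$ form an independent set $I$ whose colours are left open for now.
\end{itemize}

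\textbf{Step 3: colour $A$ together with $I$ near it (the main obstacle).} For each component $C$ of $A$, let $W_C$ be $C$ together with all vertices of $I$ adjacent to $C$. Each $W_C$ is finite, and by the separation in Step~1 the sets $W_C$ are pairwise disjoint and non-adjacent away from $I$. I would try to colour each $W_C$ with a proper colouring from $\{1,\dots,k\}$, again choosing the least one in a Borel way. Such a colouring is compatible with the rest of the picture for two reasons:
\begin{itemize}
\item every neighbour of $W_C$ outside $W_C$ either already carries a colour in $\{k+1,\dots,2k-1\}$ or lies in $I$;
\item the $I$-vertices meeting two different sets $W_C$ never lie in the same $W_C$ as one another's neighbours, since $I$ is independent and the pieces are separated.
\end{itemize}
Finally, $I$-vertices not adjacent to $A$ get colour $1$. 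This gives $k+(k-1)=2k-1$ colours in total.

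\textbf{The gap.} The delicate point is whether $G\upharpoonright W_C$ really admits a $k$-colouring once the interaction with $I$ is taken into account. The subgraph $G\upharpoonright W_C$ is $k$-colourable, since it sits inside $G$. The danger is an edge between two $I$-vertices attached to different pieces $C,C'$, but none exists because $I$ is independent. So I would first try choosing $W_C$ to include these $I$-neighbours and colour it with $\{1,\dots,k\}$. If the independence argument turns out not to suffice, I would fall back on the stronger partition in which pieces of $A$ are at distance greater than $4$, so that the $I$-neighbourhoods of distinct pieces are mutually non-adjacent.
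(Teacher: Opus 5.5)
The paper gives no proof of this lemma; it is imported from \cite[Corollary 8.2]{ConleyJacksonMarksSewardTuckerDrob2023}.

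\textbf{Step 3 is not a gap.} The point you flag is fine as written, and no fallback is needed:
\begin{itemize}
\item $G\upharpoonright W_C$ is a finite subgraph of $G$, hence $k$-colourable.
\item Separation $>2$, together with the independence of $I$, makes the sets $W_C$ pairwise disjoint and non-adjacent.
\item Every neighbour of $W_C$ lying outside $W_C$ carries a colour in $\{k+1,\dots,2k-1\}$.
\end{itemize}
So Steps 2--3 form a correct gluing argument.

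\textbf{Step 1 is the genuine gap.} All the content of the cited result lies here, and as you state it, Step 1 is false in general. Take the free part of the shift on $2^{\mathbb{Z}^2}$. It has finite Borel asymptotic dimension, and its components are square grids. No partition $\mathbb{Z}^2=A\sqcup B$ at all, Borel or not, satisfies your three conditions with a diameter bound $M$. Consider an $N\times N$ box with $N-1>M$. The crossing (Hex) lemma yields one of two things:
\begin{itemize}
\item a nearest-neighbour left--right crossing inside $B$, which lies in a single $B$-component of diameter at least $N-1$; or
\item a top--bottom crossing inside $A$ that may use diagonal steps.
\end{itemize}
In the second case, diagonal neighbours are at $\rho_G$-distance $2$. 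Your separation condition therefore forces the whole crossing into a single $A$-component, again of diameter at least $N-1$.

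\textbf{What is needed instead.} ``Uniformly bounded'' has to be weakened to ``finite'' on both sides. This is all Steps 2--3 actually use for Borelness, via finite Borel equivalence relations and least colourings. But in the $\mathbb{Z}^2$ example the pieces in each orbit must then have unbounded size. Your sketch aims only at uniformly bounded pieces built from a bounded number of rounds over the covers $E_r$ with fixed margins, so it cannot produce this.

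One needs a genuinely hierarchical construction through infinitely many scales. One way is a Borel toast: finite pieces covering $X$, any two of which are either nested with a wide margin or far apart. Take $A$ to be the union of the thin inner boundary layers of the pieces, and run Step 3 over these layers rather than over components of $G\upharpoonright A$. Extracting such a structure from $\operatorname{asdim}_B(X,\rho_G)<\infty$ is the missing idea, and your proposal does not supply it.
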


\section{Forward-Independent Hitting Sets}
In this section we prove the following theorem about the existence of forward-independent hitting sets for bounded-to-one commuting Borel functions. The theorem and an outline of its proof were suggested by Weilacher.

\begin{theorem} \label{thm:Weilacher} Let $X$ be a standard Borel space, and let $f_1, \dots, f_n\colon X\to X$ be commuting Borel functions each of which is bounded-to-one. Then for any $r>0$, there exists a Borel $r$-forward-independent hitting set for $G_{f_1,\dots, f_n}\upharpoonright F(X)$ with syndeticity $2n^2r$.
\end{theorem}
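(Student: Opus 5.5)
The plan is to use the quasi-kernel result, Lemma~\ref{lem:qk}, applied to a suitable Borel directed graph on $F(X)$. Fix $r$. On $F(X)$, put a directed edge $x\,\vec R\,y$ when $x\neq y$ and $y$ is reached from $x$ by a short forward move, and $x$, $y$ are in the kind of conflict that $r$-forward-independence forbids. The conflict is that $f^{\alpha}(x)=f^{\beta}(y)$ for some $\alpha,\beta\in[0,r]^n$ with $\alpha_i\beta_i=0$, where we write $f^\alpha=f_1^{\alpha_1}\cdots f_n^{\alpha_n}$.

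For such a conflicting pair, set $\gamma=\alpha-\beta\in\mathbb Z^n$, so that $|\gamma_i|\le r$ for each $i$. We orient the edge so that it points "forward'' in the sense of the coordinates. Concretely, declare $x\,\vec R\,y$ if the pair conflicts via some $\gamma$ with $\sum_i\gamma_i\ge 0$ (ties broken by a lexicographic or Borel-order convention). Since both orientations may occur, it may be simpler to let $\vec R$ be the symmetric conflict relation oriented both ways. With that choice, the quasi-kernel becomes an independent set $H$ such that every $y\notin H$ is within two conflict steps of $H$.

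That alone does not give forward-reachability, so the real plan is different. I would directly use the directed graph $y\,\vec R\,x$ if and only if $x=f^{\gamma}(y)$ with $\gamma\in\mathbb N^n$, $0<\sum\gamma_i\le nr$, or more generally if $x$ and $y$ conflict and $x$ is "forward'' of $y$. Then:
\begin{enumerate}
\item Independence of $X'$ gives $r$-forward-independence.
\item Syndeticity $2$ in $\vec R$-steps gives, after the quasi-kernel, a forward path of length at most $2\cdot nr\cdot n$, i.e.\ $2n^2r$.
\end{enumerate}
The exact constant should come out by bounding each $\vec R$-step by $n\cdot nr$ coordinates of forward movement.

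The key steps are as follows.

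\textbf{(1) Reduction of conflicts.} Show that if $x,y\in F(X)$ conflict via $(\alpha,\beta)$, then one can relate them by a forward-type move. Freeness ensures that $\gamma=\alpha-\beta$ is uniquely determined by the pair, so the orientation is well defined and Borel. The difficulty is that a conflict with mixed signs in $\gamma$ is not a forward move from either point. The remedy I would try is to orient conflicts by the common point: define the graph on pairs via $z=f^\alpha(x)=f^\beta(y)$ and reach $X'$ from $y$ by going to $z$ and beyond. That is, hitting from $y$ is witnessed by moving forward to a point $f^{\alpha'}(y)$ lying in $X'$, or whose forward image coincides with that of an element of $X'$. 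Here I would need $X'$ to consist of points that are forward images. To arrange this, replace the vertex set by $F(X)$ and require edges $y\to x$ only when $x=f^{\delta}(y)$ with $\delta\in\mathbb N^n$, $|\delta|_\infty\le 2r$ say, or when $x$ and $y$ conflict with $x$ lexicographically "later''. I then check that every conflict pair is connected by an edge in some direction.

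\textbf{(2) Finite Borel chromatic number.} Verify that $\vec G$ is locally countable and has finite Borel chromatic number. Local countability follows from countable-to-one. For the chromatic bound, bounded-to-one is used: each vertex has boundedly many conflict neighbours (at most $(r+1)^{2n}N^{nr}$, where $N$ bounds the fibres). A locally bounded-degree Borel graph has finite Borel chromatic number by Kechris--Solecki--Todorcevic.

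\textbf{(3) Apply the lemma and count steps.} Apply Lemma~\ref{lem:qk} and read off independence and syndeticity. The constant is obtained by summing the at most two steps, each contributing at most $n\cdot nr$ forward coordinates.

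The main obstacle is step (1): designing the orientation so that every $\vec R$-step is a genuine forward move of controlled length, while every forbidden conflict is still an edge. The first thing I would try is to make edges $y\to x$ exactly when $x=f^\delta(y)$ for $\delta\in[0,nr]^n$. I would then take the quasi-kernel $X'$ and post-process by replacing each conflict by its common forward image, checking that this keeps the set independent.
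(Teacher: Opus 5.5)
There is a genuine gap, and it is exactly the step you flag as the main obstacle. Once two of the $f_i$ are non-injective, your two demands on $\vec R$ cannot both hold: that every forbidden conflict is an edge, and that every edge is a forward move. Take $n\ge 2$ and $x,y\in F(X)$ with $f_1(x)=f_2(y)$. This is a forbidden conflict ($\alpha=e_1$, $\beta=e_2$), yet neither point is a forward image of the other. However you orient this edge, a quasi-kernel path through it is not a forward path. If you drop the edge, independence of $X'$ no longer gives $r$-forward-independence.

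Moving forward first, as in the paper's Proposition~\ref{prop:specialcase}, does not rescue this. Suppose $y=f_1^{R}(w)$. By freeness, the only point of the forward cone of $w$ that could equal $x$ is $x^*:=f_1^{R-1}f_2(w)$. All you know is $f_1(x)=f_1(x^*)$. Without injectivity of $f_1$, $x$ may be a different $f_1$-sibling of $x^*$, and then it is not a forward image of $w$ at all. This is precisely why the paper's own quasi-kernel argument assumes $f_1$ injective. Your scheme does work for $n=1$, where every conflict is a forward move of length at most $r$. With the pre-moving trick it also works whenever at most one $f_i$ is non-injective, but not in the generality of the theorem.

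Your fallback has the same problem. It takes a quasi-kernel of the pure forward graph ($y\to f_1^{\delta_1}\cdots f_n^{\delta_n}(y)$, $\delta\in[0,nr]^n$) and replaces conflicting points by common forward images. That is not yet an argument, and no bounded local repair is available in general. Such a quasi-kernel only excludes conflicts in which one point is a forward image of the other. Mixed conflicts can form unbounded chains, and the merge points you introduce can conflict again.

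A concrete instance is the translation action of $\mathbb{Z}^2$ on itself with $r=1$:
\begin{itemize}
\item The set $\{v: v_1+v_2\equiv 0 \pmod 5\}$ is a quasi-kernel of that forward graph.
\item It contains the chain $(i,-i)$, $i\in\mathbb{Z}$, in which consecutive points conflict via $f_1(x)=f_2(y)$.
\item The merge points $(i+1,-i)$ form another conflicting chain of the same kind, so the replacement can cascade.
\end{itemize}
Neither independence nor the bound $2n^2r$ is established.

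The missing idea is a way to break symmetry using information in all directions while still producing a set that is hit in the forward direction. The paper gets this from the transfer results of Naryshkin--Shinko--Weilacher--Yu (Theorems~\ref{thm:3.2} and~\ref{thm:3.3}). The required property is an LCL on $\mathbb{N}^n$, so it suffices to solve it continuously for free $\mathbb{Z}^n$-actions on zero-dimensional spaces. There backward moves are single-valued, and a clopen maximal $nr$-independent set (Lemma~\ref{lem:3.5}) is hit after first moving forward by $(nr,\dots,nr)$ (Proposition~\ref{prop:3.4}).
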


The rest of this section is devoted to a proof of Theorem~\ref{thm:Weilacher}. Our proof utilizes the deep connections between the theory of the LOCAL model of distributed computing (\cite{Lin92}) and descriptive combinatorics, which was first explored by Bernshteyn \cite{Ber23} and developed further for monoid actions in \cite{NaryshkinShinkoWeilacherYuCommutingFunctions}. Combining the results of \cite{Ber23} and \cite{NaryshkinShinkoWeilacherYuCommutingFunctions}, we are able to reduce our proof to that of a concrete statement in continuous combinatorics for $\mathbb{Z}^d$-actions.

We recall the following definitions from \cite{NaryshkinShinkoWeilacherYuCommutingFunctions}. Let $\mathsf{M}$ be a monoid. An \emph{$\mathsf{M}$-local algorithm} is a function $A\colon \Sigma^F\to \Lambda$, where $\Sigma, \Lambda$ are finite sets and $F\subseteq \mathsf{M}$ is a finite generating set containing the identity of $\mathsf{M}$. A \emph{locally checkable labeling problem} (or an \emph{LCL}) on $\mathsf{M}$ is an $\mathsf{M}$-local algorithm with $\Lambda=\{0,1\}$. If $\Pi$ is an LCL on $\mathsf{M}$, we also identify $\Pi$ with the set $\Pi^{-1}(1)$. Now consider an action of $\mathsf{M}$ on a space $Y$. If $\Pi\colon \Sigma^F\to \Lambda$ is an LCL on $\mathsf{M}$ and $y\in Y$, a function $\ell\colon Y\to \Sigma$ is a \emph{$\Pi$-labeling at} $y$ if $\Pi\big(f\mapsto \ell(fy)\big)=1$ or $\big(f\mapsto \ell(fy)\big)\in \Pi$ under the identification of $\Pi$ as a set. We say that $\ell$ is a \emph{$\Pi$-labeling} of $Y$ if $\ell$ is a $\Pi$-labeling at every $y\in Y$. 

In our context, let $\mathsf{M}$ be the monoid $\mathbb{N}^n$. Then the functions $f_1, \dots, f_n\colon X\to X$ give an action of $\mathsf{M}$ on $X$. Fix any interger $r>0$. Let $F=\big([0,2n^2r]\cap \mathbb{N}\big)^n$ and $\Sigma=\{0,1\}$. Let $\Pi$ be the set of all $\sigma\colon F\to \Sigma$ such that the following conditions hold:
\begin{enumerate}
\item[(i)] there is $(\gamma_1, \dots, \gamma_n)\in F$ such that $\sum_{i=1}^n\gamma_i\leq 2n^2r$ and $\sigma(\gamma_1,\dots, \gamma_n)=1$; and
\item[(ii)] for all distinct $(\gamma_1, \dots, \gamma_n), (\lambda_1, \dots, \lambda_n)\in F$, if 
$$ \sigma(\gamma_1,\dots, \gamma_n)=\sigma(\lambda_1,\dots,\lambda_n)=1, $$
then we have
$$ (\gamma_1+\alpha_1, \dots, \gamma_n+\alpha_n)\neq (\lambda_1+\beta_1,\dots, \lambda_n+\beta_n) $$
whenever $\alpha_i, \beta_i\in [0,r]\cap \mathbb{N}$ and $\alpha_i\beta_i=0$ for every $1\leq i\leq n$.
\end{enumerate}
It is then straightforward to check that a set $H\subseteq F(X)$ is an $r$-forward-independent hitting set with syndeticity $2n^2r$ if and only if $H$ is a $\Pi$-labeling of $F(X)$.

We will be using the following theorems from \cite{NaryshkinShinkoWeilacherYuCommutingFunctions}.

\begin{theorem}[{\cite[Theorem 1.7]{NaryshkinShinkoWeilacherYuCommutingFunctions}}]\label{thm:3.2} Let $\Gamma$ be an abelian group and $\mathsf{M}\subseteq \Gamma$ be a finitely generated submonoid with $\Gamma=\langle \mathsf{M}\rangle$.  Let $\Pi$ be an LCL on $\mathsf{M}$. Then the following are equivalent:
\begin{enumerate}
\item Every free continuous action of $\mathsf{M}$ on a $0$-dimensional Polish space admits a continuous $\Pi$-labeling.
\item Every free continuous clopen-preserving bounded-to-one action of $\mathsf{M}$ on a $0$-dimensional Polish space admits a continuous $\Pi$-labeling.
\item $\Pi$ can be solved by an $O(\log^*(n))$-local algorithm on $\Gamma$.
\item $\Pi$ can be solved by an $O(\log^*(n))$-local algorithm on $\mathsf{M}$.
\end{enumerate}
\end{theorem}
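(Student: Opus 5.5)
Since this is quoted from \cite{NaryshkinShinkoWeilacherYuCommutingFunctions}, we only sketch how the cycle of implications would go. We would prove (4)$\Rightarrow$(3)$\Rightarrow$(4), (4)$\Rightarrow$(1), (1)$\Rightarrow$(2) and (2)$\Rightarrow$(3). The model for all of this is Bernshteyn's correspondence \cite{Ber23} between $O(\log^*(n))$ LOCAL algorithms and continuous labelings on free parts of shifts.

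\textbf{The easy implications.} (1)$\Rightarrow$(2) is trivial. For (3)$\Leftrightarrow$(4), the key observation uses that $\mathsf{M}$ is finitely generated and $\langle \mathsf{M}\rangle=\Gamma$ with $\Gamma$ abelian. Let $m_0$ be the sum of the generators of $\mathsf{M}$. Then for every $R$, the shift $Rm_0+B_\Gamma(R)$ of the $\Gamma$-ball of radius $R$ lies in $\mathsf{M}$.
\begin{itemize}
\item Given an algorithm on $\Gamma$ of radius $R$, we run it on $\mathsf{M}$ by letting each vertex $y$ read the window around $Rm_0\cdot y$, which is entirely forward-reachable. Since $\Pi$ is an LCL on $\mathsf{M}$ checked in forward windows, correctness transfers.
\item Conversely, an $\mathsf{M}$-algorithm uses only forward information and so runs verbatim on $\Gamma$.
\end{itemize}

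\textbf{(4)$\Rightarrow$(1).} Let $\mathsf{M}$ act freely and continuously on a $0$-dimensional Polish space. The plan has two steps.
\begin{itemize}
\item First, build a continuous proper coloring, with finitely many colors, of the power graph $\{(y,fy):f\in F^{k}\setminus\{e\}\}$ for a large $k$. Freeness makes each $y$ disjoint from its finitely many forward images $fy$. Zero-dimensionality then gives clopen neighbourhoods separating $y$ from these images, and a countable clopen refinement yields finitely many colors via the standard greedy reduction.
\item Second, apply the speed-up principle. An $O(\log^*(n))$ algorithm can be converted into a constant-radius algorithm that uses a sufficiently fine distance coloring in place of unique identifiers. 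Evaluating this algorithm on the coloring gives a continuous $\Pi$-labeling.
\end{itemize}

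\textbf{(2)$\Rightarrow$(3), the main obstacle.} I would take $Y$ to be the free part of the shift action of $\Gamma$ on $\{0,1\}^\Gamma$, restricted to $\mathsf{M}$. Each element then acts by a homeomorphism, so the action is bounded-to-one, clopen-preserving and free. By (2) it admits a continuous $\Pi$-labeling $\ell$.

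The difficulty is that the free part is not compact, so $\ell$ need not be uniformly continuous. I would handle this by passing to a compact invariant subset of the free part consisting of hyperaperiodic points, as in the Gao--Jackson--Seward construction. On that compact set $\ell$ is uniformly continuous, so $\ell(y)$ depends only on $y\upharpoonright B_\Gamma(R)$ for a fixed $R$.

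A local algorithm on $\Gamma$ can then first compute, in $O(\log^*(n))$ rounds, a labeling whose local patterns look like hyperaperiodic patterns. Encoding a distance coloring in this way is what Bernshteyn's argument achieves. The algorithm then applies $\ell$ to these local patterns. Making this encoding compatible with the compact hyperaperiodic set is the step I expect to require the most care. Here I would follow \cite{Ber23}, replacing $\Gamma$-translates by forward $\mathsf{M}$-windows as in the first paragraph.
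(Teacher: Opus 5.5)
\textbf{The gap is in (4)$\Rightarrow$(1).} It sits at the step ``a countable clopen refinement yields finitely many colors via the standard greedy reduction.'' That reduction needs the \emph{undirected} power graph to have bounded degree. For an arbitrary free continuous $\mathsf{M}$-action only forward degrees are bounded, and a point can have unboundedly (even infinitely) many preimages. (Relatedly, to serve as identifiers on a window $F^k\cdot y$, the colors must separate $g\cdot y$ from $g'\cdot y$ for all distinct $g,g'\in F^k$, not merely $y$ from $f\cdot y$. This again involves preimages.)

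This cannot be patched, because item (1) as literally quoted is false. Take $\mathsf{M}=\mathbb{N}\subseteq\Gamma=\mathbb{Z}$, $F=\{0,1\}$, $\Sigma=\{0,1,2\}$, and let $\Pi$ be the proper $3$-coloring LCL $\ell(y)\neq\ell(Sy)$.
\begin{itemize}
\item $\Pi$ is solvable in $O(\log^*(n))$ rounds on $\mathbb{Z}$ by Cole--Vishkin, so (3) and (4) hold.
\item The shift $S$ on $[\mathbb{N}]^{\mathbb{N}}$ from the introduction is a free continuous action of $\mathbb{N}$ on a $0$-dimensional Polish space. It is even finite-to-one, and it maps clopen sets to clopen sets.
\item Yet $\chi_B(G_S)=\aleph_0$ \cite[Example 3.2]{KechrisSoleckiTodorcevic1999}, so $S$ admits no continuous $\Pi$-labeling.
\end{itemize}
So no argument, not just yours, proves (4)$\Rightarrow$(1). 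Bounded-to-one is exactly the missing hypothesis.

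\textbf{How to repair the cycle.} The intended item (1) is about free continuous actions of $\Gamma$, with $\Pi$ evaluated along $\mathsf{M}\subseteq\Gamma$. This is also how the paper uses the theorem: Proposition~\ref{prop:3.4} and Lemma~\ref{lem:3.5} concern free continuous $\mathbb{Z}^n$-actions. With that reading you must rearrange the implications as follows.
\begin{itemize}
\item (2)$\Rightarrow$(1) is now the trivial direction. Restricting a free continuous $\Gamma$-action to $\mathsf{M}$ gives an action by homeomorphisms, hence free, bounded-to-one and clopen-preserving. Your ``(1)$\Rightarrow$(2) is trivial'' is no longer available.
\item (1)$\Rightarrow$(3) is Bernshteyn's theorem for $\Gamma$ \cite{Ber23}, since $F\subseteq\mathsf{M}\subseteq\Gamma$ makes $\Pi$ an LCL on $\Gamma$. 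This subsumes your (2)$\Rightarrow$(3), which is better cited than reconstructed. The claim that a fast local algorithm can output configurations all of whose patterns come from a compact free subshift is the whole difficulty. It fails for an arbitrary such subshift, so the choice of subshift is essential, not bookkeeping.
\item Your shifted-window argument for (3)$\Leftrightarrow$(4) is fine; it correctly uses that $Rm_0+B_\Gamma(R)\subseteq\mathsf{M}$ for abelian $\Gamma$.
\item Your coloring-plus-speed-up argument is the right proof of (4)$\Rightarrow$(2). It works there precisely because bounded-to-one gives the undirected distance graph bounded degree. Clopen-preservation then makes the sets $f(P_j)$ clopen for the clopen color classes $P_j$. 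Each stage of the greedy reduction consults only finitely many color classes, so it is locally constant and hence continuous.
\end{itemize}
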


\begin{theorem}[{\cite[Proposition 3.22]{NaryshkinShinkoWeilacherYuCommutingFunctions}}]\label{thm:3.3} Let $\mathsf{M}$ be a finitely generated monoid and let $\Pi$ be an LCL on $\mathsf{M}$. If $\Pi$ can be solved by an $O(\log^*(n))$-local algorithm on $\mathsf{M}$, then any free bounded-to-one Borel action of $\mathsf{M}$ on a standard Borel space admits a Borel $\Pi$-labeling.
\end{theorem}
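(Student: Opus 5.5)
The plan is to follow Bernshteyn's simulation of distributed algorithms by Borel colorings \cite{Ber23}, adapted to a monoid acting by bounded-to-one maps. Fix a free bounded-to-one Borel action of $\mathsf{M}$ on $X$ with finite generating set $F\ni 1$, and an $O(\log^*(n))$-local algorithm solving $\Pi$.

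\textbf{Step 1: reduce to a constant-round algorithm.} By the standard speed-up for $o(\log n)$ LCL algorithms (Chang--Pettie, as used in \cite{NaryshkinShinkoWeilacherYuCommutingFunctions}), there should be constants $t,k\in\mathbb{N}$ with the following property. There is a $t$-round algorithm $\mathcal A$ whose input is a labeling of the points of $F^{t}y$ by colors from a finite set $[k]$ that are pairwise distinct on every set $F^{2t}z$. Given such input, $\mathcal A$ outputs $\sigma(y)\in\Sigma$, and the resulting labeling satisfies $\Pi$ at every point. Here I use that on $\mathsf{M}$ all information flows forward: the output at $y$ depends only on the colored neighborhood $F^t y$. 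Checking $\Pi$ at $y$ in turn depends only on the colors on $F^{2t}y$, so colors only need to be distinct on such forward sets. The proof of this step is the usual one: fix a large finite instance size $n_0$, use the $[k]$-coloring to manufacture IDs that look unique within radius $O(\log^* n_0)$, and run the original algorithm with $n_0$ declared as the size.

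\textbf{Step 2: build the Borel coloring.} Define the Borel graph $H$ on $X$ by $x\,H\,y$ iff $x\neq y$ and $F^{2t}x\cap F^{2t}y\neq\varnothing$. Since each generator is bounded-to-one, every point has boundedly many preimages under elements of $F^{2t}$. So $H$ is a Borel graph of bounded degree $\le D$, with $D$ depending only on $t$, $|F|$ and the bounds on the fibers. By \cite[Proposition 4.6]{KechrisSoleckiTodorcevic1999}, $H$ has a Borel proper $(D+1)$-coloring $c$. I take $k\ge D+1$, enlarging $k$ in Step 1 if necessary; this is harmless because the speed-up works for any fixed finite $k$. Then $c$ is injective on each $F^{2t}z$, since any two distinct points of $F^{2t}z$ are $H$-adjacent.

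\textbf{Step 3: run the algorithm and check.} Set $\ell(y)=\mathcal A\big(f\mapsto c(fy)\big)_{f\in F^t}$. This is Borel, since it is a finite Boolean combination of the Borel maps $y\mapsto c(fy)$. Freeness makes $f\mapsto fy$ injective on $F^{2t}$, so the colored forward neighborhood of $y$ is isomorphic to a correctly colored patch of the Cayley graph of $\mathsf{M}$. Step 1 then gives that $\ell$ is a $\Pi$-labeling at $y$.

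\textbf{Main obstacle.} I expect the main difficulty to be Step 1. The speed-up must be carried out for monoids, where balls are forward cones rather than symmetric balls. One has to verify that distinctness of colors on forward cones $F^{2t}z$, rather than on genuine graph balls, is enough to fake unique identifiers for the simulated algorithm. My first attempt would be to check that the algorithm on $\mathsf{M}$ only ever compares identifiers inside such cones, so that forward-local uniqueness suffices.
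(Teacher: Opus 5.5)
The paper gives no proof of this statement; it quotes it from \cite{NaryshkinShinkoWeilacherYuCommutingFunctions}. So I am comparing your plan with the standard Bernshteyn-style simulation, and your overall shape matches it: a Borel coloring of a bounded-degree conflict graph via \cite[Proposition~4.6]{KechrisSoleckiTodorcevic1999}, used as identifiers, with freeness making each view an injectively labeled patch of $\mathsf{M}$. The genuine gap is Step~1 together with the remark that enlarging $k$ ``is harmless because the speed-up works for any fixed finite $k$.'' The degree $D$ of $H$ is exponential in the radius; for instance $D\le |F|^{4t}N^{2t}$ if $N$ bounds the fibers of the maps $x\mapsto fx$, $f\in F$. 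The number of rounds $t$ needed to handle identifiers from $[k]$ grows with $k$. So enlarging $k$ enlarges $t$, hence $D$, and nothing you wrote closes this loop.

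You also cannot close it by quoting the Chang--Pettie normal form off the shelf. That result is proved for undirected finite graphs, and its flexibility in the number of colors comes from color reduction, in which every vertex reads the colors of its conflict-neighbors. This is unavailable in the forward-only model. Already for $\mathsf{M}=\mathbb{N}$ acting by a $2$-to-$1$ map, two $f$-siblings are $H$-adjacent but lie outside each other's forward views.

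The missing idea is that no speed-up is needed, because $T(n)=O(\log^*(n))$ is used only quantitatively. Let $R(n)=T(n)+O(1)$ be a radius such that checking $\Pi$ at $y$ reads only the identifiers on $F^{R(n)}y$. The conflict graph at radius $R$ has degree at most $C^{R}$, so $C^{R(n)}+1\le n$ for all large $n$. Fix such an $n$ and take a Borel proper coloring $\iota\colon X\to[n]$ of the radius-$R(n)$ conflict graph. Then run $A_n$ directly with $\iota$ as the identifier assignment. This inequality is exactly where the $\log^*$ hypothesis enters, and your write-up never makes it visible.

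Second, Step~2's claim that any two distinct points of $F^{2t}z$ are $H$-adjacent uses commutativity. From $a=fz$ and $b=gz$ you get the common forward image $ga=fb$ only when $fg=gf$. The statement concerns arbitrary finitely generated monoids. For a free action of the free monoid on $\{a,b\}$, the points $az$ and $bz$ have no common forward image, since $ha\neq kb$ for all words $h,k$. So your $c$ need not be injective on cones.

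Define instead $x\,H\,y$ iff $x\neq y$ and $x,y\in F^{R}z$ for some $z$. This graph still has bounded degree, because each element of $F^{R}$ acts by a bounded-to-one map. Injectivity on every cone $F^{R}y$ is precisely what the simulation needs. For $\mathsf{M}=\mathbb{N}^n$, the only case used in this paper, your $H$ is fine.

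Finally, check the precise definition of an $O(\log^*(n))$-local algorithm on $\mathsf{M}$ in \cite{NaryshkinShinkoWeilacherYuCommutingFunctions}. You need either that identifier assignments injective on the relevant forward cones are admissible inputs, or that such local patches extend to admissible inputs. That is what ``isomorphic to a correctly colored patch of the Cayley graph'' has to mean before Step~3 goes through.
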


By Theorem~\ref{thm:3.3} and the equivalence of Theorem~\ref{thm:3.2} (1) and (4), it suffices for us to show the following result.

\begin{proposition}\label{prop:3.4} Let $\mathbb{Z}^n$ act freely and continuously on a $0$-dimensional Polish space $Z$. Let $f_1, \dots, f_n\colon Z\to Z$ be defined by $f_i(z)=e_i\cdot z$ for $1\leq i\leq n$ and $z\in Z$, where $e_i=(0,\dots, i, \dots, 0)$ is the $i$-th generator of $\mathbb{Z}^n$. Then for any $r>0$, there is a clopen set $H\subseteq Z$ which is an $r$-forward-independent hitting set with syndeticity $2n^2r$.
\end{proposition}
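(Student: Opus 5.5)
The plan is first to unwind the definitions on a single orbit, where the problem becomes an elementary geometric statement about subsets of $\mathbb{Z}^n$, and then to obtain a clopen realization from a clopen maximal independent set.

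\textbf{Step 1: reduce to geometry on one orbit.} Since the action is free, each orbit is a copy of $\mathbb{Z}^n$, and I identify $H$ on an orbit with a set of positions in $\mathbb{Z}^n$. The $r$-forward-independence condition says that for distinct $\gamma,\lambda\in H$ we never have $\lambda-\gamma=\alpha-\beta$ with $\alpha,\beta\in([0,r]\cap\mathbb{N})^n$ and $\alpha_i\beta_i=0$ for all $i$. Every vector $\delta\in\mathbb{Z}^n$ with $\|\delta\|_\infty\le r$ can be written in exactly this form: put the positive coordinates of $\delta$ into $\alpha$ and the negative ones into $\beta$. Conversely, any such difference $\alpha-\beta$ has sup-norm at most $r$. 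So $r$-forward-independence is equivalent to
$$\|\gamma-\lambda\|_\infty>r \quad\text{for distinct } \gamma,\lambda\in H.$$
Equivalently, $H$ is an independent set in the graph $G_r$ on $Z$ that joins $z$ and $g\cdot z$ whenever $0<\|g\|_\infty\le r$. This graph has bounded degree $(2r+1)^n-1$.

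\textbf{Step 2: maximal independence gives forward hitting.} Let $H$ be a maximal independent set of $G_r$ that is clopen. I claim $H$ hits with syndeticity $2nr\le 2n^2r$. Given $z$, set $c=(r,\dots,r)$ and consider $q=c\cdot z$. By maximality there is $h\in H$ with $h=g\cdot q$ and $\|g\|_\infty\le r$. Then $h=(c+g)\cdot z$, and every coordinate of $c+g$ lies in $[0,2r]$. So $c+g\in\mathbb{N}^n$ and its coordinate sum is at most $2nr$. This is exactly the hitting condition with syndeticity $2n^2r$ in the forward cone. The point of the shift by $c$ is to turn a two-sided neighbourhood of radius $r$ into a one-sided one.

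\textbf{Step 3: produce a clopen maximal independent set.} This is the main obstacle. I would proceed as follows.

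1. Use freeness, continuity and zero-dimensionality. Each $z$ has a clopen neighbourhood $U$ with $U\cap g\cdot U=\varnothing$ for all $0<\|g\|_\infty\le r'$, where $r'$ is a suitable radius (e.g. $2r$). The space is Lindelöf and zero-dimensional, so this yields a countable clopen partition that is a proper coloring of $G_{r'}$.

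2. The difficulty is reducing this to finitely many colours continuously, so that the greedy construction (add the points of colour $1$, then those of colour $2$ with no neighbour already chosen, and so on) produces a set that is clopen and not merely open.

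3. The cleanest route is to invoke Theorem~\ref{thm:3.2}, direction (3)$\Rightarrow$(1), with $\mathsf{M}=\Gamma=\mathbb{Z}^n$. The maximal independent set problem for the bounded-degree graph $G_r$ is an LCL on $\mathbb{Z}^n$ with finite window $([-r,r]\cap\mathbb{Z})^n$. It is solvable by an $O(\log^*(n))$-local algorithm: Cole--Vishkin/Linial colour reduction to $(2r+1)^n$ colours, followed by greedy selection through the colour classes. Hence every free continuous $\mathbb{Z}^n$-action on a zero-dimensional Polish space admits a continuous labeling, i.e. a clopen maximal independent set of $G_r$.

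4. Alternatively, one can apply Theorem~\ref{thm:3.2} directly to the LCL $\Pi$ of the statement. Steps 1--2 show that on $\mathbb{Z}^n$ the LCL $\Pi$ is solved by the $O(\log^*)$ maximal-independent-set algorithm followed by the shift by $c$.

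If one prefers to avoid Theorem~\ref{thm:3.2}, the fallback is the clopen marker lemma of Gao--Jackson--Seward type for free continuous $\mathbb{Z}^n$-actions on zero-dimensional spaces. That lemma directly gives a clopen maximal $r$-separated set in the sup-norm.
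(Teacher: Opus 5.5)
Your proposal is correct and is essentially the paper's argument. Both take a clopen maximal independent set supplied by Theorem~\ref{thm:3.2} together with Linial's algorithm and then apply a diagonal forward shift; the only difference is that you separate in the sup-norm at radius $r$ rather than in the graph metric at radius $nr$, which makes forward-independence an exact equivalence and improves the syndeticity to $2nr$. Also, the obstacle you flag in Step~3.2 is not real: greedy selection along a countable clopen partition $\{U_k\}$ into $G_r$-independent pieces already yields a clopen set, because each piece $H\cap U_k=U_k\setminus\bigcup_{\|g\|_\infty\le r}g\cdot\bigcup_{j<k}(H\cap U_j)$ is clopen, so both $H$ and its complement are unions of clopen pieces.
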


Before giving a proof of Proposition~\ref{prop:3.4}, 
we briefly discuss the notion of maximal $r$-independent sets in a locally finite Borel graph of bounded degree. Let $G=(X, R)$ be a locally finite Borel graph with bounded degree.
Let $\rho_G$ be the graph metric on $G$. For $r>0$, a set $P\subseteq X$ is \emph{$r$-independent} if for any $x, y\in P$,
$\rho_G(x,y)>r$. An $r$-independent set $P\subseteq X$ is \emph{maximal} if for any $x\in X$ there is $y\in P$ such that $\rho_G(x,y)\leq r$. In the context of functions $f_1, \dots, f_n\colon X\to X$ and $r>0$, there is again an LCL $\Pi$ on the monoid $\mathbb{N}^n$ such that any maximal $r$-independent set in $G_{f_1,\dots, f_n}\upharpoonright F(X)$ is a $\Pi$-labeling of $F(X)$.

\begin{lemma}\label{lem:3.5} Let $\mathbb{Z}^d$ act freely and continuously on a $0$-dimensional Polish space $Z$. Let $G$ be the Schreier graph of the action on $Z$. Then for any $r>0$, there is a clopen set $H\subseteq Z$ which is a maximal $r$-independent set of $G$.
\end{lemma}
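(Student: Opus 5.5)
We construct the clopen maximal $r$-independent set by the standard greedy procedure over a finite clopen partition of $Z$ into pieces whose points are far apart within each piece. Write $B=\{\gamma\in\mathbb{Z}^d:\|\gamma\|_1\le r\}$, the set of group elements of word length at most $r$ for the standard generators. Then $\rho_G(z,\gamma z)\le r$ exactly when $\gamma\in B$, and by freeness these translates are distinct points. Also fix the finite set $D=B-B$.

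The first step is a clopen proper coloring of the power graph $G^{\le 2r}$, i.e.\ a partition of $Z$ into finitely many clopen sets $U_1,\dots,U_m$ such that no $U_j$ contains both $z$ and $\gamma z$ for any $0\neq\gamma\in D$. We obtain it as follows.
\begin{itemize}
\item Fix $z\in Z$. By freeness, $\gamma z\neq z$ for each of the finitely many $\gamma\in D\setminus\{0\}$.
\item Because the action is continuous, $Z$ is Hausdorff and $0$-dimensional, there is a clopen neighborhood $V_z$ of $z$ with $\gamma V_z\cap V_z=\varnothing$ for all such $\gamma$. We get it by separating $z$ from each $\gamma z$ with clopen sets and pulling back through the continuous maps $w\mapsto\gamma w$.
\item The sets $V_z$ cover $Z$. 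Since $Z$ is Polish and $0$-dimensional, this cover admits a refinement by countably many pairwise disjoint clopen sets $W_0,W_1,\dots$.
\end{itemize}
The difficulty is that there may be countably many $W_k$ rather than finitely many, and a greedy pass over countably many clopen pieces need not produce a clopen set. This is the main obstacle.

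To reduce to finitely many colors, we use the standard trick for $0$-dimensional spaces with finite-degree graphs, as in Bernshteyn's continuous analogue of Kechris--Solecki--Todorcevic.
\begin{itemize}
\item Define $c(z)=k$ when $z\in W_k$. This is a continuous proper coloring of $G^{\le 2r}$ into $\mathbb{N}$, since each $W_k$ lies inside some $V_z$.
\item Let $N=|D|$. Repeatedly recolor: process the colors $k\ge N$ in order, and recolor each point of color $k$ by the least color in $\{0,\dots,N-1\}$ not used among its $D$-translates.
\item Each recoloring stage is defined locally from finitely many translates and clopen sets, so it is continuous.
\item Each point is changed only at stage $c(z)$, and its final color depends only on the colors at the finitely many points $\gamma z$, $\gamma\in D$, at that stage.
\end{itemize}
Both sides of each recoloring check are clopen, so the limiting coloring $c'\colon Z\to\{0,\dots,N-1\}$ is well defined. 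It is also continuous: locally, $c'$ is determined after finitely many stages, namely on the clopen set where the relevant translates have colors below some bound. It is proper for $G^{\le 2r}$. Set $U_j=c'^{-1}(j)$.

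The second step is the greedy pass over the finitely many colors. Let $H_0=U_0$, and for $j\ge 1$ let
$$H_j=H_{j-1}\cup\Big(U_j\setminus\bigcup_{\gamma\in B}\gamma^{-1}H_{j-1}\Big),$$
that is, we add the points of $U_j$ that have no point of $H_{j-1}$ within distance $r$. Finally set $H=H_{N-1}$.

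It remains to check the required properties of $H$.
\begin{itemize}
\item \emph{Clopen.} Each $H_j$ is clopen by induction, since the maps $w\mapsto\gamma w$ are continuous, so finite unions and differences of preimages of clopen sets are clopen.
\item \emph{$r$-independent.} Two points of the same $U_j$ are at distance greater than $2r\ge r$, because $c'$ is proper for $G^{\le 2r}$ and $B\subseteq D$. A point added at stage $j$ is by construction at distance greater than $r$ from all of $H_{j-1}$.
\item \emph{Maximal.} Let $z\in U_j$ with $z\notin H$. Then $z$ was rejected at stage $j$, which happens only because some $\gamma z\in H_{j-1}$ with $\gamma\in B$, so $\rho_G(z,H)\le r$.
\end{itemize}
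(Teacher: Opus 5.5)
Your proof is correct, and it takes a genuinely different route from the paper. The paper proves the lemma in a few lines by citing machinery. It applies Linial's $O(\log^*(n))$-round LOCAL algorithm for maximal independent sets in bounded-degree graphs to the power graph $G^r$. It then invokes the transfer theorem (Theorem~\ref{thm:3.2}) to turn this into a continuous, hence clopen, solution.

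You instead build the set by hand, in three steps:
\begin{enumerate}
\item A clopen base gives a continuous $\mathbb{N}$-valued proper coloring of $G^{\le 2r}$.
\item The Kechris--Solecki--Todorcevic recoloring reduces this to $|D|$ colors and keeps continuity, since on the clopen set $c^{-1}(k)$ the final coloring agrees with the continuous stage-$k$ coloring.
\item A greedy pass over the finitely many clopen color classes produces a clopen maximal $r$-independent set.
\end{enumerate}
In effect you are unpacking what the transfer theorem does in this special case. Your finite continuous coloring plays the role of Linial's color reduction, and the greedy pass is the constant-round step that computes a maximal independent set from a coloring.

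Your version is self-contained and elementary, and it needs no compactness. It works verbatim for any continuous action of a finitely generated group in which the elements of $D\setminus\{0\}$ act without fixed points. The paper's version buys brevity and uniformity with the rest of Section~3, where everything is routed through Theorem~\ref{thm:3.2} anyway.

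One minor point of wording: $c'$ is well defined simply because each point is recolored at most once, at stage $c(z)$. Clopenness of the recoloring conditions is not what makes it well defined; it is what you need for continuity, which you then verify correctly.
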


\begin{proof} There are many proofs of this fact. One way to see it is to use Theorem~\ref{thm:3.2} and work of Linial \cite{Lin92}, who showed that there is an $O(\log^*(n))$-local algorithm for finding a maximal $1$-independent set in graphs of bounded degree (also see \cite[Proposition 6.2]{NaryshkinShinkoWeilacherYuCommutingFunctions}). 
To apply Linial's result, we consider a free continous open bounded-to-one action of $\mathbb{N}^d$ on a $0$-dimensional Polish space $Z$ and consider the induced Schreier graph $G$. Furthermore, consider the graph $G^r=\{(x,y)\in Z\colon \rho_G(x,y)\leq r\}$, where $\rho_G$ is the graph metric on $G$. Then a subset $P\subseteq Z$ is $r$-independent in $G$ if and only if it is $1$-independent in $G^r$. Since $G$ has bounded degree, so does $G^r$. Hence Linial's result applies to $G^r$,   yielding (3) of Theorem~\ref{thm:3.2}.
\end{proof}

\begin{proof}[Proof of Proposition~\ref{prop:3.4}] Fix any $r>0$. Let $H\subseteq Z$ be a clopen set which is also a maximal $nr$-independent set of $G=G_{f_1,\dots, f_n}$ given by Lemma~\ref{lem:3.5}. Let $\rho$ be the graph metric on $G$. We verify that $H$ is an $r$-forward-independent hitting set with syndeticity $2n^2r$. 

Take an arbitrary $z\in Z$. Let $u=f_1^{nr}\cdots f_n^{nr}(z)=(nr, nr,\dots, nr)\cdot z$. By the maximality of $H$, there is $y\in H$ such that $\rho(u,y)\leq nr$. Thus there exists $(\alpha_1, \dots, \alpha_n)\in \big([-nr, nr]\cap \mathbb{Z}\big)^n$ such that $(\alpha_1,\dots, \alpha_n)\cdot u=y$. It follows that $(nr+\alpha_1,\dots, nr+\alpha_n)\cdot z=y\in H$. Since $(nr+\alpha_1,\dots, nr+\alpha_n)\in \big([0,2nr]\cap \mathbb{N}\big)^n$ and $\sum_{i=1}^n (nr+\alpha_i)\leq 2n^2r$, we have that $H$ is a hitting set with syndeticity $2n^2r$.

It remains to check that $H$ is $r$-forward-independent. For this, assume $x, y$ are distinct and for some $\alpha_1,\dots, \alpha_n, \beta_1,\dots, \beta_n\in [0,r]\cap\mathbb{N}$ with $\alpha_i\beta_i=0$ for every $1\leq i\leq n$, we have $(\alpha_1,\dots, \alpha_n)\cdot x=(\beta_1,\dots, \beta_n)\cdot y$. Then $\rho(x,y)\leq nr$. Thus $x, y$ cannot be in $H$ simultaneously. 
\end{proof}

\section{Structural Theorems}

In this section we develop a geometric decomposition theory underlying the rest of the paper.
We first introduce some definitions adapted to the commuting family $\{f_1,\cdots,f_n\}$ for $n\in \mathbb{N}$.
We then construct a refined family of regions whose relevant faces are uniformly separated, and use this family to obtain marker decompositions into rootless and rooted regions.

Throughout this section, let $X$ be a standard Borel space, and let
$f_1,\ldots,f_n:X\to X$
be commuting Borel functions. We work on the free part
$F(X)\subseteq X$
of the graph $G_{f_1,\ldots,f_n}$.

We begin with a family of pseudo-distances which measure displacement in each  $f_i$-direction. Fix $1\leq i\leq n$.

\begin{definition}[$f_i$-pseudo-distance]\label{def:fi-pseudo-dist}
For $x,y\in F(X)$, if there exist $\alpha_1,\cdots , \alpha_n, \beta_1,\cdots , \beta_n\in\mathbb N$ such that
$$
f_1^{\alpha_1} \cdots  f_n^{\alpha_n}(x)=f_1^{\beta_1} \cdots f_n^{\beta_n}(y),
$$
then we define the \emph{$f_i$-pseudo-distance} between $x$ and $y$ by
$$
\rho_{f_i}(x,y):=|\alpha_i-\beta_i|.
$$
Otherwise, we set
$\rho_{f_i}(x,y):=\infty.$

\end{definition}

Since we work on $F(X)$, this value is unique, and thus $\rho_{f_i}$ is well defined.

For $A,B\subseteq F(X)$, we define the \emph{$f_i$-pseudo-distance} between $A$ and $B$ by
$$
\rho_{f_i}(A,B):=\inf\{\rho_{f_i}(x,y):x\in A,\ y\in B\}.
$$
And $\rho_{f_i}(x,A)=\rho_{f_i}(\{x\},A)$.
These pseudo-distances will be used to describe relative positions of points and regions. 
We say that $x$ is \emph{$f_i$-before} $y$ if $\rho_{f_i}(x,y)$ is finite and 
$$
\rho_{f_i}(x,y)>\rho_{f_i}(x,f_i(y)).
$$ 

For $A,B\subseteq F(X)$, we say that $A$ is \emph{$f_i$-before} $B$ if for all $x\in A$ and $y\in B$, the point $x$ is $f_i$-before $y$.
 
 An \emph{$f_i$-edge} is an edge of the form $\{x,f_i(x)\}$ for some $x\in F(X)$.
 
\begin{definition}[$f_i$-diameter]
Let $A\subseteq F(X)$. Define
$$
\diam_{f_i}(A):=\sup\bigl\{\alpha\in\mathbb N:\ \exists x\in A \text{ such that } f_i^{\alpha}(x)\in A\bigr\}.
$$
We call $\diam_{f_i}(A)$ the \emph{$f_i$-diameter} of $A$. 
\end{definition}

The marker regions constructed below have finite $f_i$-diameter
for every $1\le i\le n$. Next we specify the geometric shapes that will appear as marker regions.

\begin{definition}[Rooted and rootless regions]
A \emph{marker region} is a nonempty subset $A\subseteq F(X)$ such that
there exist a unique point
$x=\operatorname{root}(A)\in F(X)$,
called the \emph{root} of $A$, and integers
$\beta_1,\ldots, \beta_n\in\mathbb{N}$
satisfying
$$
\begin{aligned}
A=\Bigl\{y\in F(X):\ &
\exists\, \alpha_1,\ldots, \alpha_n\in\mathbb N
\text{ with }0\le \alpha_i\le \diam_{f_i}(A)\text{ for every $1\le i\le n$, and}\\
&
\quad \quad \quad \quad f_1^{\beta_1+\alpha_1}\cdots f_n^{\beta_n+\alpha_n}(y)=x
\Bigr\}.
\end{aligned}
$$
The region $A$ is called a \emph{rooted region} if $\beta_1=\cdots=\beta_n=0$; it is called a \emph{rootless region} if $\beta_i>0$ for all $1\leq i\leq n$.
\end{definition}

A marker region does not necessarily contain its root. The integer $\beta_i$ in the definition records the distance from the root to
the region in the $f_i$-direction. More precisely,
$$
\beta_i=\rho_{f_i}(\{\operatorname{root}(A)\},A)
\qquad\text{for every }1\le i\le n.
$$

It is natural to speak of faces of a marker region. The following is the precise definition. It can be applied to regions of more general shapes.

\begin{definition}[Faces of a region]\label{def:faces}
For $A\subseteq F(X)$ and each $1\le i\le n$, we define the \emph{forward} and \emph{backward} $f_i$-faces of $A$ by
$$
F_i^+(A):=\{x\in A:\ f_i(x)\notin A\},
\qquad
F_i^-(A):=\{x\in A:\ x\notin f_i(A)\}.
$$
We also write
$$
F_i(A):=F_i^+(A)\cup F_i^-(A).
$$
The family
$$
\{F_i^+(A),\,F_i^-(A):1\le i\le n\}
$$
will be referred to as the $2n$ \emph{faces} of $A$.
\end{definition}

Figure~\ref{Fig.2} illustrates the four faces of a marker region $A$ for $n=2$, where $f_1$ is injective and $f_2$ is
countable-to-one.
\begin{figure}[H]
    \centering
    \tikzset{every picture/.style={line width=0.75pt}}

\begin{tikzpicture}[x=0.75pt,y=0.75pt,yscale=-1,xscale=1]
% uncomment if required: \path (0,374);

% =========================================================
% 原第二幅图 -> 现在放到右边（整体右移 198）
% =========================================================
\begin{scope}[shift={(198,0)}]

% 第一幅图前面那块绿色填充曲线（保留）
\draw [
  color={rgb,255:red,0; green,0; blue,0},
  draw opacity=1,
  fill={rgb,255:red,184; green,233; blue,134},
  fill opacity=1
]
(274.71,30.59) .. controls (245.71,42.64) and (275.86,61.93) .. (274.71,61.31);

% 左侧绿色区域
\path[
  fill={rgb,255:red,184; green,233; blue,134},
  fill opacity=1,
  draw=none
]
(262,43.15)
-- (274.71,61.31)
.. controls (281,65.17) and (288,67.5) .. (297.21,69.71)
-- (297.08,190.30)
.. controls (272.67,185.17) and (263.33,171.50) .. (262,163.75)
-- cycle;

% 右上角黄色小区域
\path[
  fill={rgb,255:red,255; green,228; blue,110},
  fill opacity=1,
  draw=none
]
(394.76,29.82)
.. controls (389.58,33.80) and (392.52,35.49) .. (398.53,36.13)
-- (394.76,37.50)
-- cycle;

% 右侧黄色长条区域
\path[
  fill={rgb,255:red,255; green,228; blue,110},
  fill opacity=1,
  draw=none
]
(398.53,36.13)
-- (398.53,155.05)
.. controls (392.52,154.42) and (389.58,154.13) .. (392.10,153.10)
-- (392.10,33.10)
-- cycle;

% 主体描边
\draw (274.71,30.59) -- (394.76,29.82);
\draw (297.21,69.71) -- (398.53,36.13);
\draw (297.08,190.30) -- (398.53,155.05);

\draw [color={rgb,255:red,0; green,0; blue,0}, draw opacity=1]
  (274.71,30.59) .. controls (245.71,42.64) and (275.86,61.93) .. (274.71,61.31);

\draw (274.71,61.31) .. controls (281,65.17) and (288,67.5) .. (297.21,69.71);
\draw (262,43.15) -- (262,163.75);
\draw (297.21,69.71) -- (297.08,190.30);
\draw (262,163.75) .. controls (263.33,171.50) and (272.67,185.17) .. (297.08,190.30);

\draw [dash pattern={on 2.25pt off 1.5pt}] (274.71,61.31) -- (274.71,150.15);
\draw [dash pattern={on 1.5pt off 0.75pt}]
  (274.71,150.15) .. controls (268.67,152.83) and (263.67,153.83) .. (262,163.75);

\draw (394.76,29.82) .. controls (389.58,33.80) and (392.52,35.49) .. (398.53,36.13);
\draw (398.53,36.13) -- (398.53,155.05);
\draw [dash pattern={on 1.5pt off 0.75pt}]
  (394.76,150.15) .. controls (389.58,154.13) and (392.52,154.42) .. (398.53,155.05);

% 双线透视效果保留
\draw [dash pattern={on 2.25pt off 1.5pt}] (394.76,150.15) -- (394.76,37.5);
\draw (394.76,29.82) -- (394.76,35.07);
\draw [dash pattern={on 2.25pt off 1.5pt}] (392.1,153.1) -- (392.1,33.1);

\draw [dash pattern={on 2.25pt off 1.5pt}] (274.71,150.15) -- (394.76,150.15);
\draw [dash pattern={on 2.25pt off 1.5pt}] (274.71,30.59) -- (274.71,61.31);

% 这幅图自己的标签和小曲线
\draw (401,90.4) node [anchor=north west][inner sep=0.75pt]
  [font=\scriptsize,color={rgb,255:red,0; green,0; blue,0},opacity=1] {$F_{2}^{+}(A)$};

\draw (225,90.4) node [anchor=north west][inner sep=0.75pt]
  [font=\scriptsize,color={rgb,255:red,0; green,0; blue,0},opacity=1] {$F_{2}^{-}(A)$};

\draw (331,92.4) node [anchor=north west][inner sep=0.75pt]
  [font=\scriptsize] {$A$};

% 绿色
\draw [color={rgb,255:red,124; green,211; blue,33}, draw opacity=1]
  (240,90) .. controls (252.6,78.2) and (269,81.8) .. (270,90);

% 黄色
\draw [color={rgb,255:red,251; green,201; blue,28}, draw opacity=1]
  (400,90) .. controls (390,81.86) and (415,83.57) .. (420,90);

\end{scope}

% =========================================================
% 原第三幅图 -> 现在放到左边（整体左移 198）
% =========================================================
\begin{scope}[shift={(-198,0)}]

% 上顶面
\path[
  fill={rgb,255:red,244; green,159; blue,146},
  fill opacity=1,
  draw=none
]
(472.71,30.77) -- (592.76,30.00)
.. controls (587.58,33.98) and (590.52,35.68) .. (596.53,36.31)
-- (495.21,69.89)
.. controls (486.00,67.68) and (479.00,65.35) .. (472.71,61.49)
.. controls (473.86,62.11) and (443.71,42.82) .. (472.71,30.77)
-- cycle;

% 下底面
\path[
  fill={rgb,255:red,207; green,231; blue,246},
  fill opacity=1,
  draw=none
]
(472.71,150.33) -- (592.76,150.33)
.. controls (587.58,154.31) and (590.52,154.60) .. (596.53,155.23)
-- (495.08,190.48)
.. controls (470.67,185.35) and (461.33,171.68) .. (460.00,163.93)
.. controls (461.67,154.01) and (466.67,153.01) .. (472.71,150.33)
-- cycle;

% 主体轮廓描边
\draw [color={rgb,255:red,0; green,0; blue,0}, draw opacity=1]
  (472.71,30.77) .. controls (443.71,42.82) and (473.86,62.11) .. (472.71,61.49);
\draw (472.71,30.77) -- (592.76,30);
\draw (495.21,69.89) -- (596.53,36.31);

\draw (460,163.93) .. controls (461.33,171.68) and (470.67,185.35) .. (495.08,190.48);
\draw (495.08,190.48) -- (596.53,155.23);

\draw (592.76,30) .. controls (587.58,33.98) and (590.52,35.68) .. (596.53,36.31);
\draw (596.53,36.31) -- (596.53,155.23);
\draw [dash pattern={on 1.5pt off 0.75pt}]
  (592.76,150.33) .. controls (587.58,154.31) and (590.52,154.6) .. (596.53,155.23);

\draw [dash pattern={on 2.25pt off 1.5pt}] (592.76,150.33) -- (592.76,35.25);
\draw (592.76,30) -- (592.76,35.25);

\draw (472.71,61.49) .. controls (479,65.35) and (486,67.68) .. (495.21,69.89);
\draw [dash pattern={on 2.25pt off 1.5pt}] (472.71,61.49) -- (472.71,150.33);
\draw [dash pattern={on 1.5pt off 0.75pt}]
  (472.71,150.33) .. controls (466.67,153.01) and (461.67,154.01) .. (460,163.93);

\draw (495.21,69.89) -- (495.08,190.48);
\draw [dash pattern={on 2.25pt off 1.5pt}] (472.71,150.33) -- (592.76,150.33);
\draw [dash pattern={on 2.25pt off 1.5pt}] (472.71,30.77) -- (472.71,61.49);

% 改成严格竖直
\draw (460,43.34) -- (460,163.93);

% 中间竖虚线
\draw [dash pattern={on 2.25pt off 1.5pt}] (590,153.1) -- (590,33.1);

% 这幅图自己的标签和小曲线
\draw (551,172.4) node [anchor=north west][inner sep=0.75pt]
  [font=\scriptsize,color={rgb,255:red,0; green,0; blue,0},opacity=1] {$F_{1}^{-}(A)$};

\draw (521,10.4) node [anchor=north west][inner sep=0.75pt]
  [font=\scriptsize,color={rgb,255:red,0; green,0; blue,0},opacity=1] {$F_{1}^{+}(A)$};

\draw (531,92.4) node [anchor=north west][inner sep=0.75pt]
  [font=\scriptsize] {$A$};

% 红色
\draw [color={rgb,255:red,220; green,46; blue,31}, draw opacity=1]
  (510,40) .. controls (511.29,31.86) and (511.57,21.86) .. (520,20);

% 蓝色
\draw [color={rgb,255:red,119; green,181; blue,251}, draw opacity=1]
  (540,170) .. controls (537.29,179.57) and (544.71,178.71) .. (550,180);

\end{scope}

\end{tikzpicture} 
    \caption{The four faces of a marker region $A$.}
    \label{Fig.2}
\end{figure}

The reader should be cautioned that the marker regions illustrated in Figure~\ref{Fig.2} are idealized in that they represent better the case that all functions are surjective. In the case some functions fail to be surjective, some parts might be missing, and the regions would contain defects. However, we would still like to speak of the side lengths of a marker region as follows.  

\begin{definition}[$f_i$-side length]
Let $A\subseteq F(X)$ be a marker region,  and let $1\le i\le n$ and $d\in\mathbb N$. We define the notion of \emph{$f_i$-side length} of $A$, denoted $d_i(A)$, by defining two relations $d_i(A)\geq d$ and $d_i(A)\leq d$ as follows.
We write
$d_i(A)\ge d$
if for every $0\le \alpha\le d$,
$$ (f_i^\alpha)^{-1}\bigl(F_i^+(A)\bigr)\subseteq A;$$
we write
$d_i(A)\le d$
if
$\diam_{f_i}(A)\le d$,
and write
$d_i(A)=d$
if both $d_i(A)\ge d$ and $d_i(A)\le d$ hold.
\end{definition}

% More generally, for a nonempty set $S\subseteq\mathbb N$, we write
% $
% d_i(A)\in S
% $
% if $d_i(A)=d$ for some $d\in S$.

When $f_i$ is surjective, there is a unique $d\in\mathbb N$ such that
$d_i(A)=\diam_{f_i}(A)=d.
$
If $f_i$ is not surjective, the relation $d_i(A)=d$ may hold for more
than one value of $d$. This occurs when the region terminates in the
backward $f_i$-direction because further $f_i$-preimages do not exist.

Figure~\ref{Fig.3} illustrates two examples for $n=2$, where $f_1$ is injective, $f_2$ is
countable-to-one, and both are surjective. 
The left panel shows a rooted region $A$ with
$$
d_1(A)=2,\qquad d_2(A)=2.
$$
The right panel shows a
rootless region $B$ with
$$
d_1(B)=1,\qquad d_2(B)=1,
\qquad
\rho_{f_i}(\{\operatorname{root}(B)\},B)=1
\quad (1\le i\le 2).
$$
If any of these functions is not surjective, some illustrated points may not exist.

\begin{figure}[H]
    \centering
    \input{fig3.tex}
    \caption{A rooted region and a rootless region.}
    \label{Fig.3}
\end{figure}

%Next we introduce the faces of a region. These are the boundary pieces through which the region exits in each forward or backward $f_i$-direction.

\subsection{Construction of face-separated regions}

We now begin the marker construction. The goal of this subsection is to start
from a coarse family of large regions and then refine it into a family whose
relevant faces are uniformly separated. The resulting face-separation property
is the key geometric input in the proof of the following theorem.

\begin{theorem}\label{thm:markers}
Let $X$ be a standard Borel space, and let $D\in\mathbb{N}^+$. For $n\in\mathbb{N}$, suppose that $f_1,\cdots ,f_n:X\to X$ are commuting Borel functions each of which is countable-to-one. Assume that for every $r\in\mathbb N^+$, there is a Borel $r$-forward-independent hitting set with syndeticity $Cr$ for $G_{f_1,\ldots,f_n}\upharpoonright F(X)$, where $C\in\mathbb N^+$ is independent of $r$. Then there exists a smooth Borel subequivalence relation
$R_D\subseteq E_{f_1,\cdots,f_n}\upharpoonright F(X)$
such that every $R_D$-class $A$ is a rootless region satisfying
$d_i(A)\ge D$ for $1\le i \le n$. Moreover, if $f_1^D\cdots f_n^D(x)=f_1^D\cdots f_n^D(y)$, then $ xR_D y$.
\end{theorem}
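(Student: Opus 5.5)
The plan is to build $R_D$ from a single Borel $r$-forward-independent hitting set $H$ with syndeticity $Cr$, taking $r$ large compared with $D$, $C$ and $n$ (say $r\ge 4nCD$). We then cut each connected component of $G_{f_1,\dots,f_n}\upharpoonright F(X)$ into boxes hanging below suitable roots.

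\textbf{Step 1 (coordinates).} On $F(X)$ the pseudo-distances $\rho_{f_i}$ are well defined. Given $y$, put $w=f_1^D\cdots f_n^D(y)$ and let $h(y)$ be the first element of $H$ reachable from $w$ in at most $Cr$ forward steps, where "first" means lexicographically least exponent vector. Then $y$ has a well-defined exponent vector $\gamma(y)\in\mathbb N^n$ with $f^{\gamma(y)}(y)=h(y)$, and $\gamma_i(y)\ge D$ for every $i$. By construction $h(y)$ and $\gamma(y)-(D,\dots,D)$ depend only on $w$. This already gives the ``moreover'' clause, provided the class of $y$ is made to depend only on $w$ and the coordinates of $y$ relative to $w$.

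\textbf{Step 2 (cuts).} The points $h\in H$ do not yet tile, since distinct points of $H$ can claim overlapping boxes. For each $i$ I would fix a Borel set of $f_i$-cuts: the faces $F_i^+$ of the eventual regions. The requirement is that membership of $y$ in this set depends only on the $f_i$-coordinate of $y$ relative to nearby points of $H$, and is therefore invariant when $y$ is moved in the other directions. I would do this one coordinate at a time, as in the Gao--Jackson rectangular marker construction, processing the points of $H$ in a Borel order given by a countable Borel coloring of the bounded-distance graph on $H$. Forward-independence guarantees that two distinct points of $H$ which are comparable in the $f_i$-direction, with their other coordinates separated as in the definition, are more than $r$ apart in that direction. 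So along each $f_i$-line the candidate cuts are at least $r$ apart, and hitting with syndeticity $Cr$ makes them at most roughly $2Cr$ apart. Whenever two cuts inherited from different points of $H$ land within distance $D$ of each other, I would discard the later one; this keeps every gap at least $D$ and at most $O(Cr)$.

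\textbf{Step 3 (regions).} Let $y\,R_D\,y'$ iff for every $i$ no $f_i$-cut separates $y$ from $y'$ along their common forward images. Each class is then a box of exponent vectors below a root. Take the root to be the image of the forward corner under one further step $f_1\cdots f_n$, so that every $\beta_i\ge 1$ and the region is rootless. The side conditions $d_i(A)\ge D$ follow from the gap lower bound. One has to check that backward preimages of $F_i^+(A)$ within $D$ steps stay in $A$: this holds because cuts are defined by coordinates alone, hence are closed under taking $f_j$-preimages. The map $y\mapsto\operatorname{root}([y])$ is Borel, since it is computed within bounded forward distance. It is a selector once we replace the root by a canonical element of the class, for instance the lexicographically first backward point, which exists by countability and can be chosen in a Borel way using Lusin--Novikov. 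This gives smoothness.

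The main obstacle is Step 2: making the cuts consistent across different points of $H$, i.e.\ getting face-separation. Non-invertibility means two points of $H$ may share forward images without being comparable in a single direction. I would handle this first by demanding that cuts be determined by the $f_i$-coordinate relative to the common forward image $f_1^{r}\cdots f_n^{r}$. Forward-independence with parameter $r$ is exactly what should ensure that the conflicts which arise involve only boundedly many points and can be resolved locally.
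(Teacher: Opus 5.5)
\textbf{There is a genuine gap.} Your Step 2 is essentially the whole theorem, and the mechanism you sketch for it does not work.

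First, $r$-forward-independence does not keep candidate $f_i$-cuts $r$ apart along an $f_i$-ray. Suppose two regions hanging below $h,h'\in H$ overlap. Passing to a common lower point only shows that the exponent vectors to $h$ and to $h'$ differ by more than $r$ in \emph{some} coordinate, possibly some $j\neq i$. For instance, in $\mathbb Z^2$, two overlapping boxes whose roots differ by $(1,r+1)$ are compatible with $r$-forward-independence. Yet their $f_1$-faces are at distance $1$ along every common $f_1$-line. So nearly coincident faces are the generic situation in overlaps, not an exception.

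Second, Step 3 (that classes are boxes, and that $d_i(A)\ge D$) rests on the $f_i$-cuts being closed under $f_j$-preimages and images for $j\neq i$. Globally this is impossible. Take the free part of the Bernoulli shift $2^{\mathbb Z^2}$ with $f_1,f_2$ the two generators, which the hypothesis covers. A set $S$ of $f_1$-cuts invariant under $f_2$, meeting every forward $f_1$-ray and with gaps at least $D\ge 2$, would satisfy $0<\mu(S)\le 1/D$ for the product measure $\mu$: the sets $S,f_1(S),\dots,f_1^{D-1}(S)$ are pairwise disjoint. This contradicts ergodicity of $f_2$. So cuts can only be faces of bounded regions. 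Wherever faces of different regions meet, the classes defined in Step 3 are then not boxes. Discarding the later of two close cuts makes this worse: it either punches a hole in a face or merges two boxes into a non-box.

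\textbf{What is missing} is a way to \emph{move} faces rather than delete them, together with a bound on how far they move. This is what the paper does.
\begin{itemize}
\item It starts from the overlapping rootless regions $T^{(0)}_x$ for $x\in H$, with exponents in $(D_1,2CD_1]$, where $H$ is $D_1$-forward-independent with syndeticity $CD_1$.
\item It then runs $2n$ rounds: backward faces $F_i^-$ first, then forward faces $F_i^+$. In each round it processes $H$ along a countable Borel coloring whose color classes are $10^nCD_1$-separated.
\item Each region is extended in the $f_i$-direction until its $f_i$-faces are at $\rho_{f_i}$-distance $0$ or at least $D'=D+2$ from the $f_i$-faces of other regions. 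The comparison is made only when the graph distance is realized purely in the $f_i$-direction; this is how non-invertibility is neutralized.
\item The quantitative heart, which your last paragraph only hopes for, is the counting claim: the extension stops after $s=2D'(4C+1)^{n-1}<\frac{1}{10n}D_1$ steps. The obstructing points of $H$ are labeled injectively by $\sigma\in([-2C,2C]\cap\mathbb Z)^{n-1}$, with injectivity coming from $D_1$-forward-independence applied at a common point below two witnesses. Each witness recurs at most $2D'-1$ times.
\item Only then are the regions superimposed (the relation $R^0_D$), the local segment property derived from face separation, and the final cuts made by levels modulo $D+1$.
\end{itemize}

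\textbf{Your Step 3 output also has the wrong shape.} The ``moreover'' clause forces the class of a forward corner $c$ to contain every $c'$ with $f_1(c')=f_1(c)$, since then $f_1^D\cdots f_n^D(c')=f_1^D\cdots f_n^D(c)$. So classes cannot be boxes below a single corner. Declaring the root to be $f_1\cdots f_n(c)$ does not turn such a box into a rootless region, because a rootless region contains all $f_1^{\beta_1+\alpha_1}\cdots f_n^{\beta_n+\alpha_n}$-preimages of its root.
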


The theorem will be proved in the next subsection. In this subsection, we first construct a coarse cover of $F(X)$ by large
rootless regions. These initial regions are much larger than the final
$R_D$-classes, leaving enough space for the later refinements and boundary
adjustments. For the rest of this subsection, we work under the assumptions of
Theorem~\ref{thm:markers}.

Let $\rho$ be the graph metric on
$G_{f_1,\ldots,f_n}\upharpoonright F(X)$. Choose integers $D_1\gg D'=D+2$. We may further assume that $D_1$ is divisible by $4$. Let $H\subseteq F(X)$ be a Borel $D_1$-forward-independent hitting set with syndeticity $CD_1$ for
$G_{f_1,\ldots,f_n}\upharpoonright F(X)$, where $C\in\mathbb N^+$ is a constant.

\medskip
For each $x\in H$, define
$$
\begin{aligned}
T^{(0)}_{x}
:=\Bigl\{y\in F(X):\ &\exists \alpha_1,\cdots, \alpha_n\in(D_1,2CD_1]\cap\mathbb{N} \mbox{ such that } f_1^{\alpha_1}\cdots f_n^{\alpha_n}(y)=x\Bigr\}.
\end{aligned}
$$
It is easy to see that $T_x^{(0)}$ is a rootless region. Figure~\ref{Fig.4} illustrates $T_x^{(0)}$ with its parameters for $n=2$, where $f_1$ is injective and $f_2$ is
countable-to-one.
\begin{figure}[H]
    \centering
    \tikzset{every picture/.style={line width=0.75pt}} %set default line width to 0.75pt        

\begin{tikzpicture}[x=0.75pt,y=0.75pt,yscale=-1,xscale=1]
%uncomment if require: \path (0,374); %set diagram left start at 0, and has height of 374

%Straight Lines [id:da8640603933496599] 
\draw [line width=0.75]    (266.19,50.14) -- (386.23,49.38) ;
%Straight Lines [id:da5272087049904] 
\draw [line width=0.75]    (288.69,89.27) -- (390,55.68) ;
%Curve Lines [id:da35554828995569143] 
\draw [color={rgb, 255:red, 0; green, 0; blue, 0 }  ,draw opacity=1 ][line width=0.75]    (266.19,50.14) .. controls (237.2,60.16) and (263.7,86.42) .. (288.69,89.27) ;
%Straight Lines [id:da283812681849543] 
\draw [line width=0.75]  [dash pattern={on 2.25pt off 1.5pt}]  (266.19,169.7) -- (314.67,169.7) -- (337.07,169.7) -- (386.23,169.7) ;
%Straight Lines [id:da9219201814260687] 
\draw [line width=0.75]    (288.56,209.86) -- (390,174.61) ;
%Straight Lines [id:da02703391222385687] 
\draw [color={rgb, 255:red, 128; green, 128; blue, 128 }  ,draw opacity=1 ][line width=0.75]    (404.43,31.34) -- (405.3,169.53) ;
%Straight Lines [id:da26449166643386046] 
\draw [line width=0.75]    (288.69,89.27) -- (288.56,209.86) ;
%Straight Lines [id:da5713247775416893] 
\draw [line width=0.75]  [dash pattern={on 2.25pt off 1.5pt}]  (266.19,50.14) -- (266.19,169.7) ;
%Curve Lines [id:da7673470329675363] 
\draw [line width=0.75]    (386.23,49.38) .. controls (381.05,53.36) and (383.99,55.05) .. (390,55.68) ;
%Straight Lines [id:da007792629447112898] 
\draw [color={rgb, 255:red, 100; green, 99; blue, 99 }  ,draw opacity=1 ][line width=0.75]    (266.8,31.62) -- (404.43,31.34) ;
%Straight Lines [id:da23802924185274166] 
\draw [color={rgb, 255:red, 128; green, 128; blue, 128 }  ,draw opacity=1 ][line width=0.75]    (390,174.61) -- (405.3,169.53) ;
%Straight Lines [id:da047883369409288834] 
\draw [line width=0.75]    (390,55.68) -- (390,174.61) ;
%Curve Lines [id:da5036675100679523] 
\draw [line width=0.75]  [dash pattern={on 1.5pt off 0.75pt}]  (386.23,169.7) .. controls (384,171.42) and (383.28,172.45) .. (383.65,173.11) .. controls (384.14,174) and (386.57,174.25) .. (390,174.61) ;
%Straight Lines [id:da7651742319213847] 
\draw [color={rgb, 255:red, 128; green, 128; blue, 128 }  ,draw opacity=1 ][line width=0.75]    (289.3,70.75) -- (404.43,31.34) ;
%Straight Lines [id:da8213592548315297] 
\draw [color={rgb, 255:red, 128; green, 128; blue, 128 }  ,draw opacity=1 ][line width=0.75]    (386.23,169.7) -- (405.3,169.53) ;
%Curve Lines [id:da48562667428663053] 
\draw [line width=0.75]    (266.19,169.7) .. controls (237.2,179.72) and (263.57,207.01) .. (288.56,209.86) ;
%Straight Lines [id:da43089726020584207] 
\draw    (254,63) -- (254,183.6) ;
%Straight Lines [id:da8020915805267916] 
\draw  [dash pattern={on 2.25pt off 1.5pt}]  (386.23,169.7) -- (386.23,54.62) ;
%Straight Lines [id:da8509751735121364] 
\draw    (386.23,49.38) -- (386.23,51.7) -- (386.23,52.38) -- (386.23,52.6) -- (386.23,52.73) -- (386.23,53.1) -- (386.23,53.3) -- (386.23,53.46) -- (386.23,53.63) -- (386.23,54.63) ;
%Shape: Circle [id:dp5800921358364999] 
\draw  [color={rgb, 255:red, 208; green, 2; blue, 27 }  ,draw opacity=1 ][fill={rgb, 255:red, 208; green, 2; blue, 27 }  ,fill opacity=1 ][line width=1.5]  (406.1,31.34) .. controls (406.1,30.42) and (405.35,29.67) .. (404.43,29.67) .. controls (403.51,29.67) and (402.76,30.42) .. (402.76,31.34) .. controls (402.76,32.26) and (403.51,33) .. (404.43,33) .. controls (405.35,33) and (406.1,32.26) .. (406.1,31.34) -- cycle ;
%Straight Lines [id:da694714922830436] 
\draw    (316,20) -- (386,20) ;
\draw [shift={(386,20)}, rotate = 180] [color={rgb, 255:red, 0; green, 0; blue, 0 }  ][line width=0.75]    (0,3.35) -- (0,-3.35)(6.56,-1.97) .. controls (4.17,-0.84) and (1.99,-0.18) .. (0,0) .. controls (1.99,0.18) and (4.17,0.84) .. (6.56,1.97)   ;
%Straight Lines [id:da20762420652454283] 
\draw    (316,20) -- (264,20) ;
\draw [shift={(264,20)}, rotate = 360] [color={rgb, 255:red, 0; green, 0; blue, 0 }  ][line width=0.75]    (0,3.35) -- (0,-3.35)(6.56,-1.97) .. controls (4.17,-0.84) and (1.99,-0.18) .. (0,0) .. controls (1.99,0.18) and (4.17,0.84) .. (6.56,1.97)   ;
%Straight Lines [id:da6114292350695646] 
\draw    (244,110) -- (244,170) ;
\draw [shift={(244,170)}, rotate = 270] [color={rgb, 255:red, 0; green, 0; blue, 0 }  ][line width=0.75]    (0,3.35) -- (0,-3.35)(6.56,-1.97) .. controls (4.17,-0.84) and (1.99,-0.18) .. (0,0) .. controls (1.99,0.18) and (4.17,0.84) .. (6.56,1.97)   ;
%Straight Lines [id:da4539789011910994] 
\draw    (244,110) -- (244,30) ;
\draw [shift={(244,30)}, rotate = 90] [color={rgb, 255:red, 0; green, 0; blue, 0 }  ][line width=0.75]    (0,3.35) -- (0,-3.35)(6.56,-1.97) .. controls (4.17,-0.84) and (1.99,-0.18) .. (0,0) .. controls (1.99,0.18) and (4.17,0.84) .. (6.56,1.97)   ;
%Straight Lines [id:da4353925657467098] 
\draw    (412,41) -- (412,31) ;
\draw [shift={(412,31)}, rotate = 90] [color={rgb, 255:red, 0; green, 0; blue, 0 }  ][line width=0.75]    (0,3.35) -- (0,-3.35)(6.56,-1.97) .. controls (4.17,-0.84) and (1.99,-0.18) .. (0,0) .. controls (1.99,0.18) and (4.17,0.84) .. (6.56,1.97)   ;
%Straight Lines [id:da31138218589483857] 
\draw    (412,41) -- (412,50) ;
\draw [shift={(412,50)}, rotate = 270] [color={rgb, 255:red, 0; green, 0; blue, 0 }  ][line width=0.75]    (0,3.35) -- (0,-3.35)(6.56,-1.97) .. controls (4.17,-0.84) and (1.99,-0.18) .. (0,0) .. controls (1.99,0.18) and (4.17,0.84) .. (6.56,1.97)   ;
%Straight Lines [id:da7392327593188647] 
\draw    (390,20) -- (386,20) ;
\draw [shift={(386,20)}, rotate = 360] [color={rgb, 255:red, 0; green, 0; blue, 0 }  ][line width=0.75]    (0,3.35) -- (0,-3.35)(6.56,-1.97) .. controls (4.17,-0.84) and (1.99,-0.18) .. (0,0) .. controls (1.99,0.18) and (4.17,0.84) .. (6.56,1.97)   ;
%Straight Lines [id:da7713806085402943] 
\draw    (390,20) -- (405,20) ;
\draw [shift={(405,20)}, rotate = 180] [color={rgb, 255:red, 0; green, 0; blue, 0 }  ][line width=0.75]    (0,3.35) -- (0,-3.35)(6.56,-1.97) .. controls (4.17,-0.84) and (1.99,-0.18) .. (0,0) .. controls (1.99,0.18) and (4.17,0.84) .. (6.56,1.97)   ;
%Straight Lines [id:da697657458547721] 
\draw [color={rgb, 255:red, 128; green, 128; blue, 128 }  ,draw opacity=1 ][line width=0.75]    (390,35.68) -- (390,55.68) ;
%Straight Lines [id:da5669553098394792] 
\draw [color={rgb, 255:red, 128; green, 128; blue, 128 }  ,draw opacity=1 ][line width=0.75]    (288.69,70.27) -- (288.69,89.27) ;
%Curve Lines [id:da6392270085513881] 
\draw [color={rgb, 255:red, 128; green, 128; blue, 128 }  ,draw opacity=1 ][line width=0.75]    (266.8,31.62) .. controls (237.82,41.64) and (264.31,67.9) .. (289.3,70.75) ;
%Straight Lines [id:da9540651016424266] 
\draw [color={rgb, 255:red, 128; green, 128; blue, 128 }  ,draw opacity=1 ][line width=0.75]    (254,45) -- (254,64) ;
%Straight Lines [id:da4475450292219776] 
\draw [color={rgb, 255:red, 128; green, 128; blue, 128 }  ,draw opacity=1 ][line width=0.75]    (390,55.68) -- (405.3,49.2) ;
%Straight Lines [id:da3189705692065318] 
\draw [color={rgb, 255:red, 128; green, 128; blue, 128 }  ,draw opacity=1 ][line width=0.75]    (386.23,49.38) -- (405.3,49.2) ;
%Straight Lines [id:da6663763034332387] 
\draw [color={rgb, 255:red, 128; green, 128; blue, 128 }  ,draw opacity=1 ][line width=0.75]    (386.23,31.7) -- (386.23,50.7) ;
%Straight Lines [id:da036457188488025705] 
\draw [color={rgb, 255:red, 128; green, 128; blue, 128 }  ,draw opacity=1 ][line width=0.75]    (265.8,31.62) -- (265.8,50.62) ;
%Straight Lines [id:da43740047193846854] 
\draw  [dash pattern={on 2.25pt off 1.5pt}]  (384,167) -- (384,51.92) ;

% Text Node
\draw (409,8.4) node [anchor=north west][inner sep=0.75pt]  [font=\large,color={rgb, 255:red, 208; green, 2; blue, 27 }  ,opacity=1 ]  {$x$};
% Text Node
\draw (321,122.4) node [anchor=north west][inner sep=0.75pt]  [font=\footnotesize]  {$T_{x}^{( 0)}$};
% Text Node
\draw (211,89.4) node [anchor=north west][inner sep=0.75pt]  [font=\scriptsize]  {$2CD_{1}$};
% Text Node
\draw (297,1.4) node [anchor=north west][inner sep=0.75pt]  [font=\footnotesize]  {$2CD_{1} -D_{1}$};
% Text Node
\draw (419,34.4) node [anchor=north west][inner sep=0.75pt]  [font=\scriptsize]  {$D_{1}$};
% Text Node
\draw (388,1.4) node [anchor=north west][inner sep=0.75pt]  [font=\footnotesize]  {$D_{1}$};

\end{tikzpicture}
    \caption{$T_x^{(0)}$ for $n=2$.}
    \label{Fig.4}
\end{figure}

\begin{lemma}\label{lem:cover}
    $F(X)=\bigcup_{x\in H}T^{(0)}_{x}$.
\end{lemma}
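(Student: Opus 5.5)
The inclusion $\bigcup_{x\in H}T^{(0)}_x\subseteq F(X)$ holds by definition, so we only need to show that every $y\in F(X)$ lies in some $T^{(0)}_x$. The plan is to first push $y$ forward so that every coordinate exceeds $D_1$, and then use the syndeticity of $H$ to reach a point of $H$.

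Fix $y\in F(X)$ and set $u:=f_1^{D_1+1}\cdots f_n^{D_1+1}(y)$. Since $F(X)$ is closed under each $f_i$, we have $u\in F(X)$. Indeed, if $f_1^{\alpha_1}\cdots f_n^{\alpha_n}(f_j(z))=f_1^{\beta_1}\cdots f_n^{\beta_n}(f_j(z))$ for $z\in F(X)$, then commutativity rewrites both sides as compositions applied to $z$ with the $j$-th exponent raised by one, and freeness of $z$ gives $\alpha_i=\beta_i$ for all $i$.

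Because $H$ is a hitting set with syndeticity $CD_1$, there are $\gamma_1,\dots,\gamma_n\in\mathbb N$ with $\sum_i\gamma_i\le CD_1$ and $x:=f_1^{\gamma_1}\cdots f_n^{\gamma_n}(u)\in H$. Commutativity then gives
$$x=f_1^{D_1+1+\gamma_1}\cdots f_n^{D_1+1+\gamma_n}(y).$$
Put $\alpha_i:=D_1+1+\gamma_i$. Clearly $\alpha_i>D_1$. For the upper bound, $\alpha_i\le D_1+1+CD_1\le 2CD_1$, which holds because $C\ge1$ and $D_1\ge1$ give $CD_1\ge D_1+1$ unless $C=1$. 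That case is the one obstacle, and it is a minor one: when $C=1$ we get $D_1+1+\gamma_i\le 2D_1+1$, which may exceed $2CD_1=2D_1$ by one.

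I would handle this by pushing forward by $D_1$ in each coordinate rather than $D_1+1$, and then arguing the strict inequality separately, or simply by invoking the hitting set with a slightly smaller parameter. The cleanest fix is to note that a $D_1$-forward-independent hitting set of syndeticity $CD_1$ may be taken with $C\ge2$ without loss of generality, since enlarging $C$ only weakens the syndeticity hypothesis. The point is that we may assume $C\ge 2$ when defining $T^{(0)}_x$. Then $D_1+1+\gamma_i\le D_1+1+CD_1\le 2CD_1$, and hence $y\in T^{(0)}_x$ with $x\in H$, which completes the proof.
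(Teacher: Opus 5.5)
Your argument is the paper's own: push $y$ forward to $f_1^{D_1+1}\cdots f_n^{D_1+1}(y)$, apply syndeticity $CD_1$ there, and conclude $y\in T^{(0)}_x$ for the resulting $x\in H$. You are right that the bound $D_1+1+CD_1\le 2CD_1$ needs $C\ge 2$, which the paper's one-line proof does not check, and replacing $C$ by $\max\{C,2\}$ is a legitimate fix because syndeticity is only an upper bound. Your discarded alternative of pushing forward by only $D_1$ would not work, since $\gamma_i=0$ would give exponent $D_1\notin(D_1,2CD_1]$.
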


\begin{proof}
    Let $y\in F(X)$. Apply the hitting property of $H$ with syndeticity $CD_1$ to the point $f_1^{D_1+1}\cdots f_n^{D_1+1}(y)$, and we get $\beta_1,\dots, \beta_n\in [0,CD_1]\cap\mathbb{N}$ such that $f_1^{\beta_1+D_1+1}\cdots f_n^{\beta_n+D_1+1}(y)=x\in H$. Thus $y\in T_x^{(0)}$.
\end{proof}

\begin{lemma}
     \itshape
    % Let $X$ be a standard Borel space. For $n\in \mathbb{N}$, suppose that $f_1,\cdots ,f_n:X\to X$ are countable-to-one commuting Borel functions. 
     Let $r\in\mathbb N^+$. Define a graph $G^r=(F(X),R)$ by
$$
xRy \quad\Longleftrightarrow\quad x\neq y\text{ and }\rho(x,y)\le r.
$$
Then there is a countable proper Borel coloring for $G^r$.
\end{lemma}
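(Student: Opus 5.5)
The plan is to reduce the statement to the Kechris--Solecki--Todorcevic result that every locally countable Borel graph has countable Borel chromatic number. Equivalently, we use the Lusin--Novikov uniformization theorem together with a countable separating family of Borel sets. So it suffices to check two things: that $G^r$ is a Borel graph on the Borel set $F(X)$, and that it is locally countable.

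First, $F(X)$ is Borel. Its complement is the countable union, over pairs of distinct tuples $(\alpha_i),(\beta_i)\in\mathbb N^n$, of the Borel sets
$\{x: f_1^{\alpha_1}\cdots f_n^{\alpha_n}(x)=f_1^{\beta_1}\cdots f_n^{\beta_n}(x)\}$.

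Second, the edge relation of $G_{f_1,\ldots,f_n}\upharpoonright F(X)$ is Borel, being a finite union of graphs of Borel functions and their inverses, restricted to $F(X)^2$. The relation $\rho(x,y)\le r$ is then the projection of the set of paths $(x=z_0,z_1,\dots,z_m=y)$ with $m\le r$ and consecutive $z_j$ adjacent. This set is Borel, and each of its sections over $x$ is countable, because each $f_i$ is countable-to-one: a vertex has at most $n$ forward neighbours and countably many backward neighbours. By Lusin--Novikov, a projection of a Borel set with countable sections is Borel. Hence $R$ is Borel, and every vertex has countably many $R$-neighbours, so $G^r$ is a locally countable Borel graph.

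Now the coloring. By Lusin--Novikov, write $R=\bigcup_k \operatorname{graph}(g_k)$ with $g_k$ partial Borel functions. Fix a countable family $\{U_m\}$ of Borel sets separating points of $X$. Define $c(x)$ to be the function assigning to each $k$ with $x\in\dom(g_k)$ the least $m$ such that $x\in U_m$ and $g_k(x)\notin U_m$. This does not quite give countably many colors, so I would instead simply cite \cite[Proposition 4.10]{KechrisSoleckiTodorcevic1999}, whose proof is exactly this refinement. Concretely, one finds countably many Borel sets $B_j$, each independent in $G^r$ and together covering $F(X)$, and colors $x$ by the least $j$ with $x\in B_j$. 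The step I expect to need the most care is the Borelness of $R$ via the projection argument; it holds because path sections are countable.
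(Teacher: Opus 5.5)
\textbf{There is a genuine gap.} Your reduction rests on a false statement: a locally countable Borel graph need not have countable Borel chromatic number. Take $X=2^{\mathbb N}$ and join any two distinct $E_0$-equivalent points. This is a locally countable Borel graph. In a countable Borel coloring, every color class would be a Borel set meeting each $E_0$-class in at most one point. Its translates under the group $\bigoplus_{\mathbb N}\mathbb Z/2$ of finite bit flips are pairwise disjoint and have the same product measure, so each color class is null, and countably many of them cannot cover $X$.

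The result you cite, \cite[Proposition 4.10]{KechrisSoleckiTodorcevic1999}, is the \emph{edge}-coloring statement, and that is how the paper itself cites it. It works there because an edge has only two endpoints. The standard argument for vertex colorings with countably many colors needs local finiteness: take the least $m$ with $x\in U_m$ and $U_m$ disjoint from the finite neighbourhood of $x$. Here the $f_i$ are only countable-to-one, so $G^r$ is merely locally countable. Your verification that $R$ is Borel with countable sections is correct, but it is not where the difficulty lies; the step you outsourced is.

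What is missing is any use of the specific structure of $G^r$. The paper uses the standing hypothesis, a Borel $r$-forward-independent hitting set $H_r$, together with Luzin--Novikov enumerations $P^{\alpha_1,\dots,\alpha_n}_m$ of the fibers of $f_1^{\alpha_1}\cdots f_n^{\alpha_n}$. It colors $x$ by $(j_1,\dots,j_n,k,\ell)$, where:
\begin{itemize}
\item $(j_1,\dots,j_n)$ is the least displacement with $f_1^{j_1+r}\cdots f_n^{j_n+r}(x)\in H_r$;
\item $k$ indexes $f_1^{j_1}\cdots f_n^{j_n}(x)$ as an $f_1^r\cdots f_n^r$-preimage of a point of $H_r$;
\item $\ell$ indexes $x$ in its fiber.
\end{itemize}
Forward-independence puts distinct points with the same $k$ at distance $>r$. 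The map $f_1^{j_1}\cdots f_n^{j_n}$ does not increase $\rho$, and $\ell$ separates distinct points with the same image, so equally colored points are at distance $>r$.

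Alternatively, write $f^{\vec a}:=f_1^{a_1}\cdots f_n^{a_n}$. Then $\rho(x,y)\le r$ means $f^{\vec a}(x)=f^{\vec b}(y)$ for some $\vec a,\vec b$ with total exponent at most $r$. If the color records the Luzin--Novikov index $\ell_{\vec b}(x)$ of $x$ in its $f^{\vec b}$-fiber, then equal colors force $y=P^{\vec b}_{\ell_{\vec b}(x)}(f^{\vec a}(x))$. This is a single Borel function of $x$, which your separating-family trick handles with countably many colors. Either way, indexing points within fibers is what turns countably many potential neighbours into finitely many Borel functions, and that is the idea your proposal lacks.
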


\begin{proof} Let $H_r\subseteq F(X)$ be a Borel $r$-forward-independent hitting set for $G_{f_1,\ldots,f_n}\upharpoonright F(X)$.

First consider arbitrary $\alpha_1,\ldots, \alpha_n\in\mathbb N$. Since each $f_i$ is
countable-to-one, the composition
$f_1^{\alpha_1}\cdots f_n^{\alpha_n}$
is countable-to-one. By the Luzin--Novikov theorem, there is a sequence of Borel functions
$\{P_m^{\alpha_1,\ldots, \alpha_n}\}_{m\in\mathbb N}$
such that, for every $z\in F(X)$,
$$
\left(f_1^{\alpha_1}\cdots f_n^{\alpha_n}\right)^{-1}(z)
=
\bigcup_{m\in \mathbb{N}}
P_m^{\alpha_1,\ldots, \alpha_n}(z).
$$

We now define a Borel coloring
$c:F(X)\to\mathbb N^{n+2}$. 
For $x\in F(X)$, let
$c(x)=(j_1,\ldots,j_n,k,\ell)$
where $(j_1+\cdots+j_n,j_1,\ldots,j_n)$ is the least tuple in the
lexicographic order of $\mathbb{N}^{n+1}$ such that
$f_1^{j_1+r}\cdots f_n^{j_n+r}(x)\in H_r$ (the existence of such $j_1,\ldots,j_n$ follows from the hitting property of
$H_r$ applied to
$f_1^r\cdots f_n^r(x)$),
and $k$ and $\ell$ are respectively the least integers such that
$f_1^{j_1}\cdots f_n^{j_n}(x)
\in
P_k^{r,\ldots,r}(H_r)$
and
$P_\ell^{j_1+r,\ldots,j_n+r}
\bigl(
f_1^{j_1+r}\cdots f_n^{j_n+r}(x)
\bigr)
=
x$.

%We first prove the following claim.

\begin{claim}
\label{clm:coloring-separation}
For every $k\in\mathbb N$, if $x\ne y$ and
$x,y\in P_k^{r,\ldots,r}(H_r)$,
then
$\rho(x,y)>r$.
\end{claim}

\noindent \emph{Proof of the claim}.
Toward a contradiction, assume that $\rho(x,y)\le r$. Then there exist
$a_1,\ldots,a_n$, $b_1,\ldots,b_n\in\mathbb N$
with
$a_1+\cdots+a_n+b_1+\cdots+b_n\le r$
such that
$f_1^{a_1}\cdots f_n^{a_n}(x)
=
f_1^{b_1}\cdots f_n^{b_n}(y)$.

Set
$x':=f_1^r\cdots f_n^r(x)$, 
$y':=f_1^r\cdots f_n^r(y)$.
Since $x\neq y$ and $x,y\in P_k^{r,\ldots,r}(H_r)$, we have $x'\neq y'$ and
$x',y'\in H_r$. For each $1\le i\le n$, define
$t_i:=\max\{a_i-b_i,0\}$ 
and
$t'_i:=\max\{b_i-a_i,0\}$.
Then
$t_i t'_i=0$
and
$t_i,t'_i\in [0,r]\cap \mathbb{N}$
for every $1\le i\le n$. A direct computation gives
$f_1^{t_1}\cdots f_n^{t_n}(x')
=
f_1^{t'_1}\cdots f_n^{t'_n}(y')$. However, this contradicts the $r$-forward-independence of $H_r$, since
$x'\ne y'$ are both in $H_r$.
\hfill $\mbox{\qed}_{\mbox{\scriptsize Claim}}$
\medskip

We now verify that $c$ is a proper coloring. For this, take distinct $x,y\in F(X)$ such that
$c(x)=c(y)=(j_1,\ldots,j_n,k,\ell)$.
By the definition of $c$, we have
$f_1^{j_1}\cdots f_n^{j_n}(x),f_1^{j_1}\cdots f_n^{j_n}(y)\in P_k^{r,\ldots,r}(H_r)$.
Moreover, $f_1^{j_1}\cdots f_n^{j_n}(x)\ne f_1^{j_1}\cdots f_n^{j_n}(y)$. Indeed, if $f_1^{j_1}\cdots f_n^{j_n}(x)=f_1^{j_1}\cdots f_n^{j_n}(y)$, then applying
$f_1^r\cdots f_n^r$ would give
$$
f_1^{j_1+r}\cdots f_n^{j_n+r}(x)
=
f_1^{j_1+r}\cdots f_n^{j_n+r}(y),
$$
and the definition of $\ell$ gives $x=y$,
a contradiction. By Claim~\ref{clm:coloring-separation},
$\rho(f_1^{j_1}\cdots f_n^{j_n}(x),f_1^{j_1}\cdots f_n^{j_n}(y))>r$.
Therefore,
$\rho(x,y)>r$.

Thus, $c$ is a proper Borel coloring of $G^r$ with countably many colors. This completes the proof.
\end{proof}

Continuing the proof of Theorem~\ref{thm:markers}, we fix a proper Borel coloring $c\colon F(X)\to\mathbb{N}$ of $G^{10^nCD_1}$.
For each $m\in \mathbb{N}$, let
$C_m:=c^{-1}(\{m\})\cap H$, then
$H=\bigsqcup_{m\in \mathbb{N}} C_m$.  For all $m\in \mathbb{N}$ and distinct $x,y \in C_m$, we have 
$\rho(x,y) > 10^nCD_1$.

Next, we adjust the family $\left\{T_x^{(0)} : x \in H\right\}$ in $2n$ steps.
The purpose of these adjustments is to separate the relevant $f_i$-faces by a uniform positive distance. 

% Before proceeding, we fix more notation and convention. For $A\subseteq F(X)$, distinct $g_1, \dots, g_k\in \{f_1,\dots, f_n\}$ and integers $\gamma_1,\dots, \gamma_k\in\mathbb{Z}$, let $g_1^{\gamma_1}\cdots g_k^{\gamma_k}(A)$ denote the set of $x\in F(X)$ such that there exist $y_0, y_1, \dots, y_k\in F(X)$ such that $y_0\in A$, $y_k=x$, and for any $1\leq j\leq k$, we have $g_j^{\gamma_j}(y_{j-1})=y_j$ if $\gamma_1\geq 0$ and $y_{j-1}=g_j^{|\gamma_j|}(y_j)$ if $\gamma_j<0$. This notation is consistent with the convention, when there is a single function $f_i$, that $f_i^{\gamma}(A)$ consists of the forward images of $A$ under $f_i^{\gamma}$ if $\gamma\in \mathbb{N}$, and $f_i^{\gamma}(A)$ consists of the preimages of $A$ under $f_i^{|\gamma|}$ if $\gamma\in\mathbb{Z}^-$. Because of the commutativity of $f_1,\dots, f_n$, the order in which these images or preimages are taken is not important.

Before proceeding, we fix some notation and conventions. Let $A\subseteq F(X)$ be a marker region, let
$g_1,\ldots,g_k$ be distinct members of
$\{f_1,\ldots,f_n\}$, and let
$\gamma_1,\ldots,\gamma_k\in\mathbb Z$. We define $g_1^{\gamma_1}\cdots g_k^{\gamma_k}(A)$ to be the set of all $x\in F(X)$ for which there exist
$y_0,y_1,\ldots,y_k\in F(X)$ such that
$y_0=x$, $y_k\in A$, and, for every $1\le j\le k$,
$y_{j-1}=g_j^{\gamma_j}(y_j)$ if $\gamma_j\ge0$,
and
$y_j=g_j^{|\gamma_j|}(y_{j-1})$ if $\gamma_j<0$.

If the exponents $\gamma_1,\ldots,\gamma_k$ are all nonnegative or
all nonpositive, the commutativity of
$f_1,\ldots,f_n$ implies that the resulting set is independent of
the order of the functions.
For mixed positive and negative exponents, this need not follow from
commutativity alone. The following claim gives a sufficient condition
for order independence.
\begin{claim}
\label{commutativity}
Suppose that 
$\gamma_j
<
\rho_{g_j}
\bigl(
\operatorname{root}(A),
F_{g_j}^{+}(A)
\bigr)$
whenever $\gamma_j>0$.
Then, for every permutation $\pi$ of $\{1,\ldots,k\}$,
$g_1^{\gamma_1}\cdots g_k^{\gamma_k}(A)=
g_{\pi(1)}^{\gamma_{\pi(1)}}\cdots
g_{\pi(k)}^{\gamma_{\pi(k)}}(A).$
\end{claim}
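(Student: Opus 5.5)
The plan is to prove that every ordering describes one and the same, order-free set. Write $P_j=\max\{\gamma_j,0\}$ and $N_j=\max\{-\gamma_j,0\}$, and let $g^{P}$ and $g^{N}$ denote $\prod_j g_j^{P_j}$ and $\prod_j g_j^{N_j}$; by commutativity these products do not depend on order. The candidate common value is
$$S:=\{x\in F(X):\ \exists a\in A\ \ g^{N}(x)=g^{P}(a)\}.$$
We then show $g_{\pi(1)}^{\gamma_{\pi(1)}}\cdots g_{\pi(k)}^{\gamma_{\pi(k)}}(A)=S$ for every permutation $\pi$. This suffices, since the identity permutation is a special case.

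\emph{Inclusion $\subseteq$.} Take a chain $x=y_0,y_1,\dots,y_k\in A$ witnessing membership for the order $\pi$. Apply $g^{N}$ to $y_0$ and propagate along the chain. A positive step $y_{j-1}=g^{\gamma}(y_j)$ pushes a factor $g^{\gamma}$ onto the $A$-side. A negative step $y_j=g^{|\gamma|}(y_{j-1})$ consumes the factor $g^{|\gamma|}$ that was applied to $x$. Commutativity lets these factors be regrouped freely, which yields $g^{N}(x)=g^{P}(y_k)$ with $y_k\in A$. This direction uses no hypothesis.

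\emph{Inclusion $\supseteq$.} This is the real content. Fix $x$ and $a\in A$ with $g^{N}(x)=g^{P}(a)$, and fix an order, which we relabel as the identity. We build the chain backwards from $y_k$, keeping the invariant
$$g^{N_{\le j}}(x)=g^{P_{\le j}}(y_j),$$
with $y_k$ a suitable point of $A$ (not necessarily $a$). Here $N_{\le j}$ and $P_{\le j}$ involve only the indices $\le j$. The positive steps are harmless, since forward images always exist. The difficulty is a negative step $y_j\mapsto y_{j-1}$. It needs a $g_j^{|\gamma_j|}$-preimage $y_{j-1}$ of $y_j$ that is compatible with the invariant, and such a preimage need not exist when earlier steps are positive in other directions. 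The natural candidate is $g^{N_{<j}}(x)$ pulled back appropriately. Its existence is exactly where the hypothesis $\gamma_j<\rho_{g_j}(\operatorname{root}(A),F^+_{g_j}(A))$ must enter.

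My plan for that step is to use the explicit form of $A$. Every $a\in A$ satisfies $f^{\beta+\alpha}(a)=\operatorname{root}(A)$, and the root is at $g_j$-distance larger than $\gamma_j$ from the forward face. Hence, for each positive step, the relevant forward images of $A$ stay inside the cone of preimages of $\operatorname{root}(A)$. In coordinates relative to the root, each point is then determined by its coordinate vector together with a choice among preimages of the root. Using this, I would choose the intermediate points $y_j$ as suitable preimages of $\operatorname{root}(A)$ lying on the path from $x$ to the root. Freeness on $F(X)$ makes the coordinates well defined, and this would let me produce each required preimage by truncating the path from $x$ to $\operatorname{root}(A)$, rather than asking for preimages of arbitrary points. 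I expect this coordinate bookkeeping, namely verifying that the chosen truncations still satisfy the defining relation of $A$ at the end ($y_k\in A$), to be the main obstacle. The point to check is that the bounds $0\le\alpha_i\le\diam_{f_i}(A)$ are preserved exactly because each positive $\gamma_j$ is smaller than the distance from the root to the forward face.
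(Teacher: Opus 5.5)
\textbf{There is a genuine gap, and it cannot be closed from the stated hypothesis.}

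Your inclusion of every ordered set in $S$ is correct and uses no hypothesis. Note also that $S=\{x: g^{N}(x)\in g^{P}(A)\}$ is itself the ordering in which all positive factors are applied first. So the entire statement is the reverse inclusion, and you have not proved it.

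The mechanism you propose for the reverse inclusion cannot work. Suppose the outermost factor $g_{\pi(1)}^{\gamma_{\pi(1)}}$ has $\gamma_{\pi(1)}>0$. Then the definition forces $x=y_0=g_{\pi(1)}^{\gamma_{\pi(1)}}(y_1)$, so $x$ itself must have a preimage. No truncation of the forward path from $x$ to $\operatorname{root}(A)$ can supply one. The hypothesis $\gamma_j<\rho_{g_j}(\operatorname{root}(A),F^+_{g_j}(A))$ controls only where forward images of $A$ land, not whether preimages exist.

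In fact, without surjectivity the claim is false. Take $X=\mathbb N^2\setminus\{(0,0),(0,1)\}$ with $f_1(a,b)=(a+1,b)$ and $f_2(a,b)=(a,b+1)$, so that $F(X)=X$. Let $A=\{0,1,2\}^2\cap X$, the rootless region with root $(5,5)$ and $\beta=(3,3)$. Then $\rho_{f_1}((5,5),F_1^+(A))=3>1$, so the hypothesis holds for $\gamma_1=1$. Now $(1,1)\in f_2^{-1}f_1(A)$, because $f_2(1,1)=(1,2)=f_1(0,2)$. But $(1,1)\notin f_1f_2^{-1}(A)$, because $(0,1)\notin X$. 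Planting the same defect in a product of two $2$-to-$1$ shifts gives a variant in which the root of $A$ is unique.

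\textbf{The paper's proof has the same dependence.} It is hidden in the assertion that each ordered set equals $\{x:\exists\alpha\ f_1^{\beta_1+\alpha_1-\delta_1}\cdots f_n^{\beta_n+\alpha_n-\delta_n}(x)=\operatorname{root}(A)\}$. That identification needs preimages: in the example this set also contains $(1,0)$, which lies in neither ordered set. The paper assumes all $f_i$ surjective shortly afterwards.

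\textbf{How to close the argument under surjectivity.} Your framework then closes in a few lines, and this is where the hypothesis really enters. Write $f^{v}$ for $f_1^{v_1}\cdots f_n^{v_n}$. Take $x\in S$, say $g^{N}(x)=g^{P}(a)$ with $f^{\beta+\alpha}(a)=\operatorname{root}(A)$.
\begin{enumerate}
\item Choose $b$ with $g^{P}(b)=x$. Then $b\in F(X)$, since $F(X)$ is closed under preimages.
\item We have $g^{P}(g^{N}(b))=g^{N}(x)=g^{P}(a)$. View $P$ as a vector in $\mathbb N^n$ and apply $f^{\beta+\alpha-P}$. This is legitimate because each $P_j=\gamma_j$ is smaller than the $g_j$-coordinate $\rho_{g_j}(\operatorname{root}(A),F^+_{g_j}(A))$ of $\beta$. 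It yields $f^{\beta+\alpha}(g^{N}(b))=\operatorname{root}(A)$, so $g^{N}(b)\in A$.
\item The points $y_j:=g^{N_{\le j}}g^{P_{>j}}(b)$ then form a valid chain for every ordering. They are all forward images of the single preimage $b$.
\end{enumerate}
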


\noindent \emph{Proof of the claim}.
For $1\le i\le n$, let $\beta_i:=\rho_{f_i}\bigl(\operatorname{root}(A),F_i^+(A)\bigr)$ and define
$$
\delta_i=
\begin{cases}
\gamma_j,&\text{if }f_i=g_j\text{ for some }1\le j\le k,\\
0,&\text{otherwise}.
\end{cases}
$$

If $\delta_i\le0$, then
$\beta_i-\delta_i\ge0$.
If $\delta_i>0$, then $\delta_i=\gamma_j$ and $f_i=g_j$ for some
$j$, so the hypothesis guarantees $\delta_i<\beta_i$ and hence
$\beta_i-\delta_i\ge0$.
The set $g_{\pi(1)}^{\gamma_{\pi(1)}}\cdots
g_{\pi(k)}^{\gamma_{\pi(k)}}(A)$ consists of those $x\in F(X)$ for which there exist
$\alpha_1,\ldots,\alpha_n\in\mathbb N$ satisfying
$0\le\alpha_i\le \diam_{f_i}(A)$
for every $1\le i\le n$
and
\begin{align*}
\operatorname{root}(A)=
f_1^{\beta_1+\alpha_1-\delta_1}
\cdots
f_n^{\beta_n+\alpha_n-\delta_n}(x).
\end{align*}
This description is independent of the permutation $\pi$.
Consequently, 
$$
g_1^{\gamma_1}\cdots g_k^{\gamma_k}(A)
=
g_{\pi(1)}^{\gamma_{\pi(1)}}\cdots
g_{\pi(k)}^{\gamma_{\pi(k)}}(A).
$$
\vskip -18pt \hfill $\mbox{\qed}_{\mbox{\scriptsize Claim}}$

\bigskip

For each $x\in H$, $T_x^{(0)}$ is a rootless marker region with root $x$ satisfying $\rho_{f_j}
\bigl(
x,
F_j^+(T_x^{(0)})
\bigr)
>
\frac14D_1$
for $1\le j\le n$. Hence Claim~\ref{commutativity} applies to all exponents
$|\alpha_j|\le \frac14D_1$. We therefore define 
$$
\widetilde T_x^{(0)}=\bigcup _{  \alpha_1,\cdots , \alpha_n \in \left[-\frac{1}{4}D_1,\frac{1}{4}D_1\right]\cap\mathbb{Z}}f_1^{\alpha_1}\cdots  f_n^{\alpha_n}(T_x^{(0)}),
$$
and this definition is independent of
the order of the
functions.

 For each $x\in H$ and $1\leq i\leq n$, we will construct $T_x^{(i)}$ and $T_x^{(n+i)}$ below.  We also define the auxiliary regions
  $$
 \widetilde T_x^{(i)}=\bigcup _{  \alpha_1,\cdots, \alpha_{i-1}, \alpha_{i+1},\cdots , \alpha_n \in \left[-\frac{1}{4}D_1,\frac{1}{4}D_1\right]\cap\mathbb{Z}}f_1^{\alpha_1}\cdots f_{i-1}^{\alpha_{i-1}} f_{i+1}^{\alpha_{i+1}}\cdots f_n^{\alpha_n}(T_x^{(i)})
 $$
 and 
 $$
 \widetilde T_x^{(n+i)}=\bigcup _{\alpha_1,\cdots, \alpha_{i-1}, \alpha_{i+1},\cdots , \alpha_n \in \left[-\frac{1}{4}D_1,\frac{1}{4}D_1\right]\cap\mathbb{Z}}f_1^{\alpha_1}\cdots f_{i-1}^{\alpha_{i-1}} f_{i+1}^{\alpha_{i+1}}\cdots f_n^{\alpha_n}(T_x^{(n+i)}).
$$
The
sets $T_x^{(i)}$ and $T_x^{(n+i)}$ need not be marker regions. However, we will see that they can be viewed as unions of finitely many marker regions satisfying the condition of Claim~\ref{commutativity}, so the conclusion of Claim~\ref{commutativity} holds for them. Therefore, the definitions of
$\widetilde T_x^{(i)}$ and $\widetilde T_x^{(n+i)}$
are independent of the order of the functions.

\vspace{1em}
\noindent\textbf{Step 1.}
We will obtain $\{T_x^{(1)}:x\in H\}$ by extending each element of $\{T_x^{(0)}:x\in H\}$ in the backward $f_1$-direction so that
for all distinct $x,y\in H$:

\begin{enumerate}[label=(1.\arabic*),leftmargin=3.2em]
\item $T_x^{(0)}\subseteq T_x^{(1)}$;
\item $\rho_{f_1}\!\bigl(F_1^-(T_x^{(0)}),F_1^-(T_x^{(1)})\bigr)\le \frac{1}{10n}D_1$;
\item For any $z\in F_1^-(T_x^{(1)})$, if $\rho(z, F_1^-(\widetilde T_y^{(1)}))=\rho_{f_1}(z, F_1^-(\widetilde T_y^{(1)}))$, then $\rho_{f_1}(z, F_1^-(\widetilde T_y^{(1)}))\ge D'$ or $\rho_{f_1}(z, F_1^-(\widetilde T_y^{(1)}))=0$.
\end{enumerate}

Define $T_x^{(1)}$ by induction on $m\in\mathbb{N}$ for $x\in C_m$. 
Suppose $m\geq 0$ and $x\in C_m$. Assume that $T_y^{(1)}$ and $\widetilde T_y^{(1)}$ have been defined for all $y\in\bigcup_{m'<m}C_{m'}$. 
We make the following adjustment to $T_x^{(0)}$. Define 
$$
\begin{aligned}
A_0=\Big\{z\in F_1^-(T_x^{(0)})\colon  
& \rho(z, F_1^-(\widetilde T_y^{(1)}))=
\rho_{f_1}(z, F_1^-(\widetilde T_y^{(1)}))<D' \text{ for some } y \in \bigcup_{m'<m}C_{m'}, \\
&\mbox{or } \rho(z, F_1^-(\widetilde T_y^{(0)}))=
\rho_{f_1}(z, F_1^-(\widetilde T_y^{(0)}))<D' \text{ for some } y \in \bigcup_{m'>m}C_{m'}\Big\}.
\end{aligned}
$$

We remark that, in the above definition, for
$y\in\bigcup_{m'>m}C_{m'}$, since $\widetilde T_y^{(1)}$ is not yet defined, we consider the initial region
$\widetilde T_y^{(0)}$ instead. This is important when $f_1$ is not surjective: a backward
face of $\widetilde T_y^{(0)}$ may contain defects due to the absence of $f_1$-preimages
and therefore may not be literally separable from other faces by a later backward extension. Such faces
are therefore treated as fixed obstacles from the beginning.

For $t\ge0$, define inductively 
$$
\begin{aligned}
    A_{t+1}=\Big\{z\in f_1^{-1}(A_t)\colon  &\rho(z, F_1^-(\widetilde T_y^{(1)}))=
\rho_{f_1}(z, F_1^-(\widetilde T_y^{(1)}))<D' \text{ for some } y \in \bigcup_{m'<m}C_{m'} \\
&\mbox{or } \rho(z, F_1^-(\widetilde T_y^{(0)}))=
\rho_{f_1}(z, F_1^-(\widetilde T_y^{(0)}))<D' \text{ for some } y \in \bigcup_{m'>m}C_{m'}\Big\}.
\end{aligned}
$$

\begin{claim}
\label{counting}
Let $s:=2D'(4C+1)^{n-1}$. Then
$A_s=\varnothing$.
\end{claim}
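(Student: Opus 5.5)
The plan is a pigeonhole argument along a single backward $f_1$-chain. Suppose toward a contradiction that $A_s\neq\varnothing$ and pick $z_s\in A_s$. By the recursive definition, $z_t:=f_1^{s-t}(z_s)$ lies in $A_t$ for every $0\le t\le s$. Thus $z_0\in F_1^-(T_x^{(0)})$, the points $z_0,z_1,\dots,z_s$ form a backward $f_1$-chain, and all of them share the same $f_j$-coordinates relative to $x$ for $j\ne1$. Membership $z_t\in A_t$ gives, for each $t$, a \emph{blocker}. This is a point $y_t\in H$ with $y_t\ne x$ (either $y_t\in\bigcup_{m'<m}C_{m'}$, using $\widetilde T_{y_t}^{(1)}$, or $y_t\in\bigcup_{m'>m}C_{m'}$, using $\widetilde T_{y_t}^{(0)}$), such that $z_t$ is within pure $f_1$-displacement $<D'$ of the corresponding backward $f_1$-face. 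I will show that a single blocker serves fewer than $2D'$ indices $t$, and that at most $(4C+1)^{n-1}$ distinct blockers can occur. Together these give $s+1>2D'(4C+1)^{n-1}$ indices covered by fewer than that many, a contradiction.

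\emph{Step 1: one blocker covers fewer than $2D'$ indices.} Fix $y\in H$ and one of the two candidate regions $\widetilde T_y$. I will use that its backward $f_1$-face is described through the root $y$. By Claim~\ref{commutativity} and the marker-region description, a point $w$ on this face satisfies $f_1^{a}f_2^{\alpha_2}\cdots f_n^{\alpha_n}(w)=y$ for some $a$ in a fixed small window. Property (1.2), applied inductively to already built regions, shows this window has width $\le \frac1{10n}D_1$; the tilde-thickening adds only transverse shifts. Along the chain, $\rho_{f_1}(z_t,y)$ changes by exactly $1$ per step, since we work in $F(X)$ and the coordinates are unique. The condition $\rho(z_t,\cdot)=\rho_{f_1}(z_t,\cdot)<D'$ forces the displacement to be purely in the $f_1$-direction, so it pins the $f_1$-level of $z_t$ to within $D'$ of a face level. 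I will check that the face level relevant to the chain is single, so the admissible $t$ form an interval of length $<2D'$. This is where the choice $\rho=\rho_{f_1}$ in the definition is used: it excludes transverse approaches that could otherwise spread the coverage.

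\emph{Step 2: at most $(4C+1)^{n-1}$ blockers.} Every blocker $y$ lies within graph distance $O(CD_1)$ of $x$; this follows from the shapes of $T^{(0)}$, the $\frac14D_1$-thickening, and (1.2). So by the coloring of $G^{10^nCD_1}$, the blockers all differ from $x$, but this alone does not bound their number. For that I will use the $D_1$-forward-independence of $H$. Consider the transverse offsets $(\rho_{f_j}\text{-coordinates of }y\text{ relative to the chain})_{j\ge2}$. These range over an interval of length about $4CD_1$ in each of the $n-1$ transverse directions. Two points of $H$ whose transverse offsets differ by less than $D_1$ in every coordinate, and whose $f_1$-offsets also differ by less than $D_1$, would violate forward-independence. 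Indeed, commonly reachable points $f^{\alpha}(y)=f^{\beta}(y')$ with $\alpha_i\beta_i=0$ and $\alpha_i,\beta_i\le D_1$ exist by commutativity once the coordinate differences are at most $D_1$. Partition the transverse range into $(4C+1)^{n-1}$ cells of side $\le D_1$. Since the chain has length $s\ll D_1$ (because $D_1\gg D'$), all blockers relevant to $t\in[0,s]$ have $f_1$-offsets within a window of width $<D_1$. Hence each cell contains at most one blocker.

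The main obstacle is Step 2, specifically making the "cell" argument rigorous in a non-invertible setting. Coordinates relative to the chain are defined only through common forward images, so I must verify carefully that two blockers in one cell really have a common forward image with exponents of the required disjoint-support form bounded by $D_1$. I would handle this by pushing everything forward by $f_1^{N}\cdots f_n^{N}$ for a large $N$ to a common point, and reading off all offsets there as integer vectors. Forward-independence then becomes the statement that distinct $y\in H$ have offset vectors differing by more than $D_1$ in some coordinate, which gives the cell bound directly.
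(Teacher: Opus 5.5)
Your architecture matches the paper's proof: a backward $f_1$-chain $z_0,\dots,z_s$, an injective assignment of witnesses to $(4C+1)^{n-1}$ transverse cells of side $D_1$, and fewer than $2D'$ indices per witness. Step~1 is the paper's triangle-inequality argument, which, like yours, tacitly needs the relevant backward face to sit at a single $f_1$-level along the chain. The genuine gap is in Step~2, at exactly the point you flag, and the fix you propose does not work.

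\textbf{The gap: closeness of offsets does not violate forward-independence.} You claim that two points of $H$ whose offsets differ by at most $D_1$ in every coordinate violate $D_1$-forward-independence. This is false for non-injective maps, and pushing forward to a common image cannot rescue it. Write $f^{h}$ for $f_1^{h_1}\cdots f_n^{h_n}$. From $f^{a}(y)=f^{b}(y')$ you cannot cancel the common part of $a$ and $b$, so you never reach a coincidence $f^{\alpha}(y)=f^{\beta}(y')$ with $\alpha_i\beta_i=0$.

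Concretely, let $y\neq y'$ be $f_2$-siblings. Their offset vectors relative to any common forward image coincide. Yet any $f^{\alpha}(y)=f^{\beta}(y')$ forces $\alpha=\beta$: apply $f_2$, use $f_2(y)=f_2(y')$, and invoke freeness of $y$. Then $\alpha_i\beta_i=0$ gives $\alpha=\beta=0$, i.e.\ $y=y'$, a contradiction. So siblings are $r$-forward-independent for every $r$, and one of your cells can contain several points of $H$. The statement ``distinct points of $H$ have offset vectors differing by more than $D_1$ in some coordinate'' is therefore wrong.

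\textbf{The missing idea: a common point below both witnesses.} Suppose $f^{h}(u)=y_t$ and $f^{h'}(u)=y_{t'}$. Replace $u$ by $f^{\min(h,h')}(u)$, taking the componentwise minimum; the exponents then have disjoint supports. Commutativity now gives $f^{h'}(y_t)=f^{h}(y_{t'})$, and the offset bounds make $h_i,h'_i\le D_1$.

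The chain supplies such a $u$.
\begin{itemize}
\item The paper extracts it from $f_1^{D_1}(z_{t'})$, which lies in both $\widetilde T^{(0)}_{y_t}$ and $\widetilde T^{(0)}_{y_{t'}}$ and hence maps forward onto both witnesses.
\item Alternatively $u=z_s$ works. Indeed $z_s$ maps forward onto every $z_t$, and each $z_t$ maps forward onto its witness $y_t$. This is immediate if $v_t=f_1^{\ell}(z_t)$. If instead $z_t=f_1^{\ell}(v_t)$ with $\ell<D'$, it holds because $v_t$ reaches $y_t$ with $f_1$-exponent far larger than $D'$.
\end{itemize}

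Your cells and your $f_1$-window must be measured as exponents from this lower point. Equivalently, use $\rho_{f_i}$, but only together with the existence of $u$. Then the cell condition and the $f_1$-window bound the normalized $h_i,h'_i$ by $D_1$, and $D_1$-forward-independence yields the injectivity of the witness-to-cell map.
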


\noindent \emph{Proof of the claim}. 
Toward a contradiction, assume that $A_s\neq\varnothing$. Fix
$z_s\in A_s$. Since $A_{t+1}\subseteq f_1^{-1}(A_t)$ for all $t\ge 0$, we have points $z_t\in A_t$ for $0\le t\le s$, such that
$f_1(z_{t+1})=z_t$ for each $0\le t<s$.
Note that 
\begin{align}
z_t=f_1^{\,t'-t}(z_{t'})
\qquad\text{whenever }0\le t<t'\le s.
\tag{1}
\end{align}

For each $0\le t<s$, by the definition of $A_t$, either there exists
$y_t\in \bigcup_{m'<m}C_{m'}$
such that
\begin{align}
   \rho\bigl(z_t,F_1^-(\widetilde T_{y_t}^{(1)})\bigr)
=
\rho_{f_1}\bigl(z_t,F_1^-(\widetilde T_{y_t}^{(1)})\bigr)<D',
\tag{2} 
\end{align}

or there exists
$y_t\in \bigcup_{m'>m}C_{m'}$
such that
\begin{align}
    \rho\bigl(z_t,F_1^-(\widetilde T_{y_t}^{(0)})\bigr)
=
\rho_{f_1}\bigl(z_t,F_1^-(\widetilde T_{y_t}^{(0)})\bigr)<D'.
\tag{3}
\end{align}

Choose
$v_t\in F_1^-(\widetilde T_{y_t}^{(1)})$
witnessing (2) or $v_t\in F_1^-(\widetilde T_{y_t}^{(0)})$ witnessing (3). Since the graph metric values in (2) and (3) are realized purely
in the $f_1$-direction, there exist
$\alpha_{1,t}, \beta_{1,t}\in\mathbb N$
such that
\begin{align}
    f_1^{\alpha_{1,t}}(z_t)=f_1^{\beta_{1,t}}(v_t),
\qquad
\alpha_{1,t}\beta_{1,t}=0,
\qquad
|\alpha_{1,t}-\beta_{1,t}|<D'.
\tag{4}
\end{align}

We first bound the number of distinct witnesses $y_t$. Since
$z_0=f_1^t(z_t)\in A_0\subseteq F_1^-(T_x^{(0)})$,
there exist
$a_{2,t},\ldots,a_{n,t}\in (D_1,2CD_1]\cap \mathbb{N}$
such that
$$
f_1^{2CD_1+t} f_2^{a_{2,t}}\cdots f_n^{a_{n,t}}(z_t)=x.
$$
Similarly, from $v_t\in F_1^-(\widetilde T_{y_t}^{(1)})$ or $v_t\in F_1^-(\widetilde T_{y_t}^{(0)})$, using condition \textup{(1.2)}, there exist
$b_{2,t},\ldots,b_{n,t}\in (0,2CD_1+D_1)\cap\mathbb N$
and $r_t\ge 2CD_1$ such that
$$
f_1^{r_t} f_2^{b_{2,t}}\cdots f_n^{b_{n,t}}(v_t)=y_t.
$$
For $2\le i\le n$, set
$$
\Delta_i(y_t):=a_{i,t}-b_{i,t}.
$$
%The definition is independent of the choices in $(4)$ and $(5)$, since we work on the free part $F(X)$. Moreover,
Then we have $\Delta_i(y_t)\in [-2CD_1,\,2CD_1]\cap \mathbb{Z}$.

Define
$$
q_i(y_t):=\left\lfloor\frac{\Delta_i(y_t)}{D_1}\right\rfloor
\in [-2C, 2C]\cap \mathbb{Z},
\qquad 2\le i\le n,
$$
and put
$$
\sigma(y_t):=(q_2(y_t),\ldots,q_n(y_t)).
$$

We claim that the map
$y_t\longmapsto \sigma(y_t)$
is injective on the set of witnesses appearing in (2) and (3). Suppose that
$\sigma(y_t)=\sigma(y_{t'})$
for some $t<t'$. Then, for each $2\le i\le n$, the values
$\Delta_i(y_t)$ and $\Delta_i(y_{t'})$ lie in the same interval of length
$D_1$. Hence
$$
\rho_{f_i}(y_t,y_{t'})<D_1
\qquad\text{for every }2\le i\le n.
$$

We also compare the $f_1$-direction. By $(1)$ and $(4)$,
$f_1^{\alpha_{1,t}+t'-t}(z_{t'})=f_1^{\beta_{1,t}}(v_t)$.
Therefore
$$
f_1^{D_1}(z_{t'})
=
f_1^{D_1+\beta_{1,t}-\alpha_{1,t}-(t'-t)}(v_t).
$$
Since
$t'-t\le s$ and $|\alpha_{1,t}-\beta_{1,t}|<D'$, we have 
$f_1^{D_1}(z_{t'})\in \widetilde T_{y_t}^{(0)}$ in both cases of (2) and (3). By the definition of $y_{t'}$, we also have
$f_1^{D_1}(z_{t'})\in \widetilde T_{y_{t'}}^{(0)}$. Thus, there exist a point $u$ and
integers $h_i,h'_i\in\mathbb N$, $1\le i\le n$, such that
$$
f_1^{h_1}\cdots f_n^{h_n}(u)=y_t,
\qquad
f_1^{h'_1}\cdots f_n^{h'_n}(u)=y_{t'},
$$
and
$$
0\le h_i,h'_i\leq D_1,
\qquad
h_ih'_i=0
\qquad\text{for every }1\le i\le n.
$$
Hence, by commutativity,
$f_1^{h'_1}\cdots f_n^{h'_n}(y_t)
=
f_1^{h_1}\cdots f_n^{h_n}(y_{t'})$.
Since $H$ is $D_1$-forward-independent and $y_t,y_{t'}\in H$, we must have
$y_t=y_{t'}$. 

It follows from the injectivity of $y_t\mapsto \sigma(y_t)$ that the
number of distinct witnesses $y_t$ is at most 
\begin{align}
    \bigl|\,([-2C, 2C]\cap \mathbb{Z})^{n-1}\,\bigr|=(4C+1)^{n-1}.
\tag{5}
\end{align}

It remains to bound how often a fixed witness can occur. Fix $y\in H$. Suppose
that
$y_t=y_{t'}=y$ with $t<t'$. First suppose $y\in \bigcup_{m'<m}C_{m'}$. Then by $(2)$, both $z_t$ and $z_{t'}$ are at $f_1$-pseudo-distance $<D'$
from $F_1^-(\widetilde T_y^{(1)})$. Since
$z_t=f_1^{\,t'-t}(z_{t'})$,
we have
$\rho_{f_1}(z_t,z_{t'})=t'-t.$
If $t'-t\ge 2D'$, then the triangle inequality for $\rho_{f_1}$ gives
$$
\rho_{f_1}\bigl(z_{t'},F_1^-(\widetilde T_y^{(1)})\bigr)
\ge
\rho_{f_1}(z_t,z_{t'})
-
\rho_{f_1}\bigl(z_t,F_1^-(\widetilde T_y^{(1)})\bigr)
>
2D'-D'=D',
$$
contradicting $(2)$. Hence
$t'-t<2D'$. Now suppose $y\in \bigcup_{m'>m}C_{m'}$. Then we also have $t'-t<2D'$ from a similar argument. 
Thus each fixed witness $y$ occurs for at most $2D'-1$ values of $t$.

Combining this with $(5)$, we get
$$
\bigl|\{0\leq t<s\colon A_t\neq\varnothing\}\bigr|
\le
(2D'-1)(4C+1)^{n-1}
<
2D'(4C+1)^{n-1}
=s.
$$
This contradicts the existence of the chain
$z_0,z_1,\ldots,z_s$.
Therefore $A_s=\varnothing$.
\hfill $\mbox{\qed}_{\mbox{\scriptsize Claim}}$
\medskip

We then define $T_x^{(1)}=T_x^{(0)}\cup \bigcup_{1\le t< s}A_t$.
Since $s=2D'(4C+1)^{n-1}$,
we may take $D_1$ so large that
$\frac{1}{10n}D_1>s$, then \textup{(1.2)} holds. The construction of the sets $A_t$
enforces \textup{(1.3)}. Since $\rho(x,y)>10^nCD_1$ for distinct $x, y\in C_m$, the constructions of $T_x^{(1)}$ and $T_y^{(1)}$ can take place simultaneously without interfering with each other. 

Each $A_t$, and hence $T_x^{(1)}$, is a union of marker regions.
Indeed, fix a witness $y$, an integer $\beta<D'$, and one of the
two $f_1$-directions. By the induction on $m$,
$\widetilde T_y^{(\ell)}$ is a union of marker regions, where
$\ell=1$ if $y\in\bigcup_{m'<m}C_{m'}$ and $\ell=0$ otherwise.
The corresponding part of $A_0$ is therefore a union of nonempty
sets of the form
$$
F_1^-(T_x^{(0)})
\cap
f_1^{\pm\beta}
\bigl(F_1^-(B)\bigr),
$$
where $B$ is a marker region contained in $\widetilde T_y^{(\ell)}$, and each such set is again a marker
region. Thus, $A_0$ is a union of marker regions.

If $A_t$ is a union of marker regions, then so is
$f_1^{-1}(A_t)$. Thus, we partition $f_1^{-1}(A_t)$ into several marker regions. Applying a similar argument to each single marker region shows that $A_{t+1}$ is also a union of
marker regions. Therefore, the assertion follows by induction on $t$.

The bounds on the face
adjustments ensure that every marker region $B$ in these
decompositions satisfies
$\rho_{f_j}
\bigl(
\operatorname{root}(B),
F_j^+(B)
\bigr)
>
\frac14D_1$
for $j\ne 1$.
Hence Claim~\ref{commutativity} applies to all exponents
$|\alpha_j|\le \frac14D_1$. Therefore, the definition of
$\widetilde T_x^{(1)}$
is independent of the order of the functions.

Figure~\ref{fig:step1-adjustment} illustrates the local effect of
Step~1 in the case $n=2$, where $f_1$ is injective and $f_2$ is
countable-to-one. The left panel shows the initial region
$T_x^{(0)}$, while the right panel shows the adjusted region
$T_x^{(1)}$. In the right panel, the blue and orange parts represent
$\bigcup_{1\le t<s} A_t$.

\begin{figure}[htbp]
    \centering
    \input{fig5}
    \caption{The local adjustment from $T_x^{(0)}$ to $T_x^{(1)}$
    in Step~1.}
    \label{fig:step1-adjustment}
\end{figure}

%----------------------------
\medskip
\noindent\textbf{Steps $2, \dots, n$.}
For each $i=2,\ldots,n$, we obtain
$\{T_x^{(i)}:x\in H\}$ from
$\{T_x^{(i-1)}:x\in H\}$ by repeating Step~1 with $f_1$ and
$F_1^-$ replaced by $f_i$ and $F_i^-$, respectively.
The same argument shows that each $T_x^{(i)}$ is a union of marker
regions to which Claim~\ref{commutativity} applies. Therefore, the definition of
$\widetilde T_x^{(i)}$
is independent of the order of the functions.

More precisely, for all distinct $x,y\in H$, the following
properties hold:
\begin{enumerate}[label=($i$.\arabic*),leftmargin=3.2em]
  \item $T_x^{(i-1)}\subseteq T_x^{(i)}$;
  \item $\rho_{f_i}\!\bigl(F_i^-(T_x^{(i-1)}),\,F_i^-(T_x^{(i)})\bigr)\le \frac{1}{10n}D_1$;
  \item For any $z\in F_i^-(T_x^{(i)})$, if $\rho(z, F_i^-(\widetilde T_y^{(i)}))=\rho_{f_i}(z, F_i^-(\widetilde T_y^{(i)}))$, then $\rho_{f_i}(z, F_i^-(\widetilde T_y^{(i)}))\ge D'$ or $\rho_{f_i}(z, F_i^-(\widetilde T_y^{(i)}))=0$.
\end{enumerate}

\medskip
\noindent
At the end of these steps, we obtain the family $\{T_x^{(n)}\colon x\in H\}$. Next, we perform a round of adjustments for $F_i^+$-faces.

%-----------------
\medskip
\noindent\textbf{Step $n+1$.}
In this step we obtain $\{T_x^{(n+1)}:x\in H\}$ by extending each element of $\{T_x^{(n)}:x\in H\}$ in the $f_1$-direction so that
for all distinct $x,y\in H$:
\begin{quote}
\begin{enumerate}[label=($n+1$.\arabic*),leftmargin=3.2em]
\item $T_x^{(n)}\subseteq T_x^{(n+1)}$;
  \item $\rho_{f_1}\bigl(F_1^+(T_x^{(n)}),F_1^+(T_x^{(n+1)})\bigr)\le \frac{1}{10n}D_1$;
\item  For any $z\in F_1^+(T_x^{(n+1)})$, if $\rho(z, F_1^+(\widetilde T_y^{(n+1)}))=
\rho_{f_1}(z, F_1^+(\widetilde T_y^{(n+1)}))$, \\then $\rho_{f_1}(z, F_1^+(\widetilde T_y^{(n+1)}))\ge D'$;
\item  For any $z\in F_1^+(T_x^{(n+1)})$, if $\rho(z, F_1^-(\widetilde T_y^{(n+1)}))=
\rho_{f_1}(z, F_1^-(\widetilde T_y^{(n+1)}))$, \\then $\rho_{f_1}(z, F_1^-(\widetilde T_y^{(n+1)}))\ge D'$.
\end{enumerate}
\end{quote}

Exactly as we did in the previous steps, define $T_x^{(n+1)}$ by induction on $m$ for $x\in C_m$. Fix $m\in \mathbb{N}$ and $x\in C_m$. Assume that $T_y^{(n+1)}$ and $\widetilde T_y^{(n+1)}$ have been defined for all $y\in\bigcup_{m'<m}C_{m'}$.
We define $A_0'\subseteq F_1^+(T_x^{(n)})$ to be the points which violate $(n+1.3)$ or $(n+1.4)$. More precisely, for each $x\in H$, define 
$$
\begin{aligned}
    A'_0=\Big\{z\in F_1^+(T_x^{(n)})\colon &\rho(z, F_1^+(\widetilde T_y^{(n+1)}))=
\rho_{f_1}(z, F_1^+(\widetilde T_y^{(n+1)}))< D' \text{ for some } y \in \bigcup_{m'<m}C_{m'}, \\& \mbox{or } \rho(z, F_1^-(\widetilde T_y^{(n)}))=
\rho_{f_1}(z, F_1^-(\widetilde T_y^{(n)}))< D'\text{ for some } y \in H\Big\}.
\end{aligned}
$$

Here the second condition ranges over all $y\in H$, since the
$F_1^-$-faces were fixed during the first $n$ adjustment stages.

Inductively, for $t\ge 0$, define 
 $$
 \begin{aligned}
     A_{t+1}'=\Big\{z\in f_1(A'_t)\colon &\rho(z, F_1^+(\widetilde T_y^{(n+1)}))=
\rho_{f_1}(z, F_1^+(\widetilde T_y^{(n+1)}))< D' \text{ for some } y \in \bigcup_{m'<m}C_{m'},\\& \mbox{or } \rho(z, F_1^-(\widetilde T_y^{(n)}))=
\rho_{f_1}(z, F_1^-(\widetilde T_y^{(n)}))< D'\text{ for some } y \in H\Big\}.
 \end{aligned}
$$
An argument analogous to the proof of
Claim~\ref{counting} shows that $A'_{s'}=\varnothing$ for $s'=4D'(4C+3)^{n-1}$.

We now set $T_x^{(n+1)}:=T_x^{(n)}\cup \bigcup_{1\le t<s'}A'_t$. Choosing $D_1$ so large that
$\frac{1}{10n}D_1>s'$, we obtain $(n+1.2)$ holds. The construction of the sets $A'_t$
enforces $(n+1.3)$ and $(n+1.4)$. 

Similarly, $T_x^{(n+1)}$ is a union of marker regions and the bounds on the face
adjustments ensure that every marker region $B$ in these
decompositions satisfies
$\rho_{f_j}
\bigl(
\{\operatorname{root}(B)\},
F_j^+(B)
\bigr)
>
\frac14D_1$
for $j\ne 1$.
Hence Claim~\ref{commutativity} applies to all exponents
$|\alpha_j|\le \frac14D_1$. Therefore, the definition of
$\widetilde T_x^{(n+1)}$
is independent of the order of the functions.

%------------------------------
\medskip
\noindent\textbf{Steps $n+2,\dots,2n$.}
For each $i=2,\ldots,n$, we obtain
$\{T_x^{(n+i)}:x\in H\}$ from
$\{T_x^{(n+i-1)}:x\in H\}$ by repeating Step~$n+1$ with
$f_1$, $F_1^+$, and $F_1^-$ replaced by
$f_i$, $F_i^+$, and $F_i^-$, respectively.
The same argument shows that each $T_x^{(n+i)}$ is a union of marker
regions to which Claim~\ref{commutativity} applies. Therefore, the definition of
$\widetilde T_x^{(1)}$
is independent of the order of the functions.

More precisely, for all distinct $x,y\in H$, the following
properties hold:
\begin{quote}
\begin{enumerate}[label=($n+i$.\arabic*),leftmargin=3.2em]
\item $T_x^{(n+i-1)}\subseteq T_x^{(n+i)}$;
\item $\rho_{f_i}\bigl(F_i^+(T_x^{(n+i-1)}),F_i^+(T_x^{(n+i)})\bigr)\le \frac{1}{10n}D_1$;
\item  For any $z\in F_i^+(T_x^{(n+i)})$, if $\rho(z, F_i^+(\widetilde T_y^{(n+i)}))=
\rho_{f_i}(z, F_i^+(\widetilde T_y^{(n+i)}))$, \\then $\rho_{f_i}(z, F_i^+(\widetilde T_y^{(n+i)}))\ge D'$ ;
\item  For any $z\in F_i^+(T_x^{(n+i)})$, if $\rho(z, F_i^-(\widetilde T_y^{(n+i)}))=
\rho_{f_i}(z, F_i^-(\widetilde T_y^{(n+i)}))$, \\then $\rho_{f_i}(z, F_i^-(\widetilde T_y^{(n+i)}))\ge D'$ .
\end{enumerate}
\end{quote}
\medskip

At the end of these steps, we obtain the family $\{T_x^{(2n)}:x\in H\}$. 
Their faces are uniformly separated.

% \begin{lemma}
% \label{lem:face-separation}
% For each $1\le i\le n$, the family $\{T_x^{(2n)}:x\in H\}$ satisfies
% $$
% \rho_{f_i}\bigl(F_i(T_x^{(2n)}),F_i(T_y^{(2n)})\bigr)\ge D'
% $$
% whenever $x,y\in H$ are distinct and
% $\widetilde T_x^{(2n)}\cap\widetilde T_y^{(2n)}\neq\varnothing$.
% \end{lemma}

% \begin{proof}
% Fix $1\le i\le n$ and distinct $x,y\in H$. $\rho_{f_i}\bigl(F_i^-(T_x^{(2n)}),F_i^-(T_y^{(2n)})\bigr)\ge D'$ follows
% from the condition imposed in the $i$-th $F_i^-$-face adjustment, namely
% $(i.3)$. $\rho_{f_i}\bigl(F_i^+(T_x^{(2n)}),F_i^+(T_y^{(2n)})\bigr)\ge D'$ and $\rho_{f_i}\bigl(F_i^+(T_x^{(2n)}),F_i^-(T_y^{(2n)})\bigr)\ge D'$
% follow from the $i$-th $F_i^+$-face adjustment, namely
% \textup{($n+i$.3)} and \textup{($n+i$.4)}. Combining these four cases gives $\rho_{f_i}\bigl(F_i(T_x^{(2n)}),F_i(T_y^{(2n)})\bigr)\ge D'$. 
% \end{proof}

%--------------------------
%After the above adjustments, we obtain the following face-separation lemma.

\subsection{Rootless marker regions}

We now use the face-separated family $\{T_x^{(2n)}:x\in H\}$ to construct Borel marker regions of controlled shape. The first outcome is a decomposition into rootless regions.

\begin{proof}[Proof of Theorem~\ref{thm:markers}]
Define a subequivalence relation $R_D^{0}\subseteq E_{f_1,\cdots,f_n}$ by
$$
\begin{aligned}
x\,R_D^{0}\,y
\iff\ & x\,E_{f_1,\cdots,f_n}\,y
\mbox{ and }
\forall z\in H\cap [x]_{E_{f_1,\cdots,f_n}}\,
\bigl(x\in T_z^{(2n)} \iff y\in T_z^{(2n)}\bigr).
\end{aligned}
$$
By the Luzin--Novikov theorem, the universal quantifier in the above formula can be replaced by a universal quantifier over natural numbers. Thus $R_D^{0}$ is Borel.

Geometrically, an $R_D^{0}$-class may be viewed as the region obtained by superimposing the regions $\{T_z^{(2n)}:z\in H\}$ and cutting along overlaps. In general, such a superposition may produce irregular classes whose boundary faces do not yet have the form of a single rootless region.

Figure~\ref{Fig.5} illustrates a typical example for $n=2$, where $f_1$ is injective and $f_2$ is countable-to-one. The green, purple, black, and blue
regions represent $T_{z_1}^{(4)}$, $T_{z_2}^{(4)}$, $T_{z_3}^{(4)}$, and
$T_{z_4}^{(4)}$, respectively. The upper-left panel shows their projection in
the $f_2$-direction, while the middle panel shows the overlap pattern. The
red parts indicate the overlap regions.

\begin{figure}[H]
    \centering
     \includegraphics[trim=1cm 0.8cm 4cm 1cm,clip,width=1\linewidth]{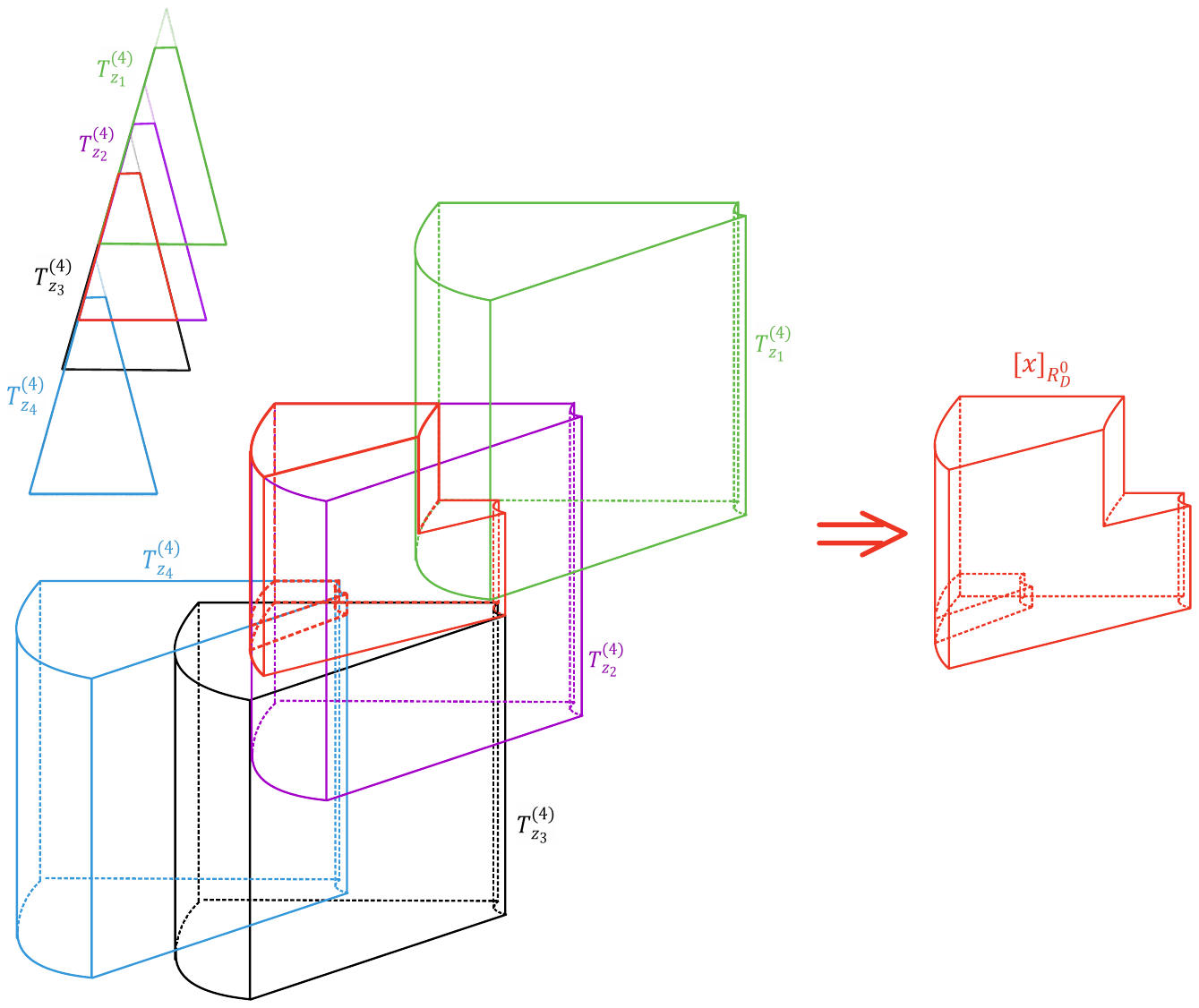}
    \caption{Superposition of overlapping $T_z^{(2n)}$ regions.}
    \label{Fig.5}
\end{figure}

Note that by the construction of 
$\{T_z^{(2n)}\}_{z\in H}$, if $x,y\in F(X)$ and $$f_1^{\frac14D_1}\cdots f_n^{\frac14D_1}(x)
=
f_1^{\frac14D_1}\cdots f_n^{\frac14D_1}(y),$$
then
$x R_D^0 y$.
The next claim isolates a uniform local configuration inside every $R_D^{0}$-class, which will be used to perform the final adjustment.

\begin{claim}
\label{clm:local-segment}
For
$y\in F(X)$, there are some $x$ and $0\le\beta\le D$ such that
$f_i^\beta(x)=y$
and
$\bigcup_{0\le\alpha\le D}
f_i^{-\alpha}(y)
\subseteq [x]_{R_D^0}$.
\end{claim}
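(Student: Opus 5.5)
The plan is to fix $1\le i\le n$ and $y\in F(X)$, and to locate the $R_D^0$-cut closest to $y$ along the backward $f_i$-direction. I would use the face separation properties $(i.3)$, $(n+i.3)$ and $(n+i.4)$ of the family $\{T_z^{(2n)}:z\in H\}$.

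Recall that $x R_D^0 y$ iff $x,y$ lie in exactly the same sets $T_z^{(2n)}$. So an $f_i$-edge $\{w,f_i(w)\}$ separates $R_D^0$-classes only if, for some $z$, exactly one of $w,f_i(w)$ lies in $T_z^{(2n)}$. In that case $w\in F_i^+(T_z^{(2n)})$, or $f_i(w)\in F_i^-(T_z^{(2n)})$. More generally, for $u\in f_i^{-\alpha}(y)$ with $\alpha\le D$, the points $u$ and $y$ lie in different classes only if the $f_i$-chain $u,f_i(u),\dots,f_i^\alpha(u)=y$ crosses such a face. I will call such crossings \emph{cuts}.

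\textbf{Step 1 (choosing $x$).} Consider the chain of cuts encountered between $y$ and its backward $f_i$-preimages up to depth $2D$, say. I aim to show that all cuts within $\rho_{f_i}$-distance $<D'=D+2$ of each other along backward $f_i$-rays from $y$ sit at a single "level". Here the level of a point $u$ with $f_i^\gamma(u)=y$ is $\gamma$.

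Two cuts coming from different faces $F_i^{\pm}(T_z^{(2n)})$ and $F_i^{\pm}(T_{z'}^{(2n)})$ are joined purely by $f_i$-steps, so their graph distance equals their $\rho_{f_i}$-distance. The separation properties then force that distance to be either $\ge D'$ or $0$. A distance of $0$ is the coincident-face case allowed only for $F_i^-$ in $(i.3)$.

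A single region's own two faces $F_i^+(T_z)$ and $F_i^-(T_z)$ are far apart. This is because $d_i(T_z^{(2n)})$ is about $2CD_1-D_1\gg D$, using properties $(i.2)$ and $(n+i.2)$.

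There is a subtlety: cuts on different backward branches could lie at different levels. I would handle this by projecting forward. If two cuts on different branches are at levels differing by less than $D'$, apply $f_i^{\gamma}$ to bring one onto the ray of the other. Commutativity and the description of faces via marker regions (Claim~\ref{commutativity}) show that faces are level sets in the $f_i$-coordinate relative to the root. So the images still lie on the corresponding faces, and the same dichotomy applies.

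\textbf{Step 2 (conclusion).} From Step 1, the levels at which cuts occur within depth $D$ of $y$ are pairwise at least $D'>D+1$ apart. Hence at most one such level $\ell\le D$ exists in the relevant window. Take $\beta=\ell$ and pick $x$ on that level in the same class as $y$, i.e. just on $y$'s side of the cut. If there is no cut, take $\beta=0$ and $x=y$. Then all $f_i$-preimages of $y$ up to depth $D$ either lie below the cut level or above it. I would shift the choice so that the window $[0,D]$ below $x$ avoids the unique cut; this is possible because the next cut is at distance $\ge D'\ge D+2$.

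The main obstacle is the uniformity across branches in Step 1: showing that cut levels do not depend on the branch. My first attempt would be to reduce this to the statement, from the construction, that $R_D^0$ is unchanged under $f_1^{D_1/4}\cdots f_n^{D_1/4}$-collapsing. I would combine this with the fact that $F_i^\pm(T_z^{(2n)})$ are unions of faces of marker regions whose $f_i$-coordinates are determined relative to the root.
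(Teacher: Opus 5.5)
There is a genuine gap in Step 2: the argument is anchored at the wrong point. The set in the statement, $\bigcup_{0\le\alpha\le D}f_i^{-\alpha}(y)$, contains $y$ itself (take $\alpha=0$). So whenever your unique cut level $\ell\le D$ exists, this set has points on both sides of the cut and meets two $R_D^0$-classes. No choice of $x$ can then put it inside $[x]_{R_D^0}$.

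Your ``shift'' quietly replaces this set by the depth-$D$ window below $x$, which is a different set. To make that window avoid the cut, $x$ has to be moved below the cut. You then lose even $x\,R_D^0\,y$, and the conclusion says nothing about the class of $y$. In fact the claim read literally is false: any $y\in F_i^-([y]_{R_D^0})$ that has an $f_i$-preimage is a counterexample, and you should have flagged this.

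The paper's proof begins by negating, and therefore establishes (and later uses), the following version. For every $x\in F(X)$ and $1\le i\le n$ there is $0\le\beta\le D$ with $\bigcup_{0\le\alpha\le D}f_i^{-\alpha}\bigl(f_i^{\beta}(x)\bigr)\subseteq[x]_{R_D^0}$. The point is to move \emph{forward} from the given point rather than backward. Forward moves are deterministic and let you place the nearest cut exactly where you want it.

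Your ingredients suffice once the argument is anchored this way.
If the depth-$D$ backward cone of $x$ contains no cut, take $\beta=0$.
Otherwise pick $u$ with $f_i^{\alpha}(u)=x$, $\alpha\le D$, and $\alpha_0<\alpha$ with $f_i^{\alpha_0}(u)\notin[x]_{R_D^0}$ and $f_i^{\alpha_0+1}(u)\in[x]_{R_D^0}$. Set $\beta=D-\alpha+\alpha_0+1\in[1,D]$ and $w=f_i^{\beta}(x)$. The cut then sits exactly at depth $D+1$ below $w$, while $x$ and $f_i^{\alpha_0+1}(u)$ lie in the depth-$D$ cone of $w$.

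A cut anywhere inside that cone can be moved onto the ray through $u$ by your collapsing idea. Points of $f_i^{-\gamma}(w)$ with $\gamma\le D<\frac14D_1$ have the same image under $f_1^{D_1/4}\cdots f_n^{D_1/4}$, hence the same memberships in every $T_z^{(2n)}$, and likewise for their $f_i$-images and $f_i$-preimages. This gives two face points on one $f_i$-ray at $\rho_{f_i}$-distance at most $D+1<D'$, which $(i.3)$, $(n+i.3)$ and $(n+i.4)$ forbid. Hence the whole cone lies in $[f_i^{\alpha_0+1}(u)]_{R_D^0}=[x]_{R_D^0}$.

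This is exactly the paper's argument: its $\eta$ is this $\beta$, and its auxiliary point $v'$ plays the role of your branch transport.

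A smaller point: your reason why a region's own $F_i^+$- and $F_i^-$-faces are far apart does not follow from $(i.2)$ and $(n+i.2)$. Those only bound an infimum, and they do not control $f_i$-face pieces created by the adjustments in the other directions. That separation has to be extracted from how the sets $A_t$ and $A'_t$ are defined.
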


\noindent\emph{Proof of the claim}.
Toward a contradiction, assume that there exist $x\in F(X)$ and
$1\le i\le n$ such that, for every $0\le\beta\le D$, there are
$0\le\alpha\le D$ and
$u\in f_i^{-\alpha}\bigl(\{f_i^\beta(x)\}\bigr)$
with $u\notin[x]_{R_D^0}$.

Taking $\beta=0$, choose $u\in f_i^{-\alpha}(x)$ for some
$1\le\alpha\le D$ such that $u\notin[x]_{R_D^0}$. Since
$f_i^\alpha(u)=x\in[x]_{R_D^0}$, there is a least
$\alpha_0<\alpha$ such that
$$
f_i^{\alpha_0}(u)\notin[x]_{R_D^0}
\qquad\text{and}\qquad
f_i^{\alpha_0+1}(u)\in[x]_{R_D^0}.
$$
By the definition of $R_D^0$, these two points are distinguished by
$T_z^{(2n)}$ for some
$z\in H\cap[x]_{E_{f_1,\ldots,f_n}}$. Since they are consecutive
in the $f_i$-direction, one of them belongs to
$F_i(T_z^{(2n)})$. Without loss of generality, assume that
$f_i^{\alpha_0}(u)\in F_i(T_z^{(2n)})$.

Set $\eta:=D-\alpha+\alpha_0+1$ and 
$w:=f_i^\eta(x)$.
Then $1\le\eta\le D$ and
$f_i^{D+1}\bigl(f_i^{\alpha_0}(u)\bigr)=w$.

We claim that no point in
$\bigcup_{0\le\gamma\le D}f_i^{-\gamma}(w)$
belongs to $F_i(T_{z'}^{(2n)})$ for any
$z'\in H\cap[x]_{E_{f_1,\ldots,f_n}}$. Otherwise, if
$v\in f_i^{-\gamma}(w)\cap F_i(T_{z'}^{(2n)})$
for some $0\le\gamma\le D$, then there exists $v'\in F(X)$ such that $f_1^{D'}\cdots f_n^{D'}(v')
=
f_1^{D'}\cdots f_n^{D'}(v)$ and $f_i^{D+1-\gamma}(f_i^{\alpha_0}(u))=v'.$ It follows that $\rho
\bigl(
f_i^{\alpha_0}(u),v'
\bigr)=\rho_{f_i}
\bigl(
f_i^{\alpha_0}(u),v'
\bigr)
\le D+1<D'$. 
This contradicts either condition \textup{($i$.3)} imposed in the
$i$-th $F_i^-$-face adjustment, or one of the conditions
\textup{($n+i$.3)} and \textup{($n+i$.4)} imposed in the
$i$-th $F_i^+$-face adjustment.
Hence, for each $z'\in H\cap [x]_{E_{f_1,\cdots,f_n}}$, either all points in $\bigcup_{0\le\gamma\le D}f_i^{-\gamma}(w)$
belong to $T_{z'}^{(2n)}$, or none of them do.
Since $f_i^{\alpha_0+1}(u)\in f_i^{-D}(w)$
belongs to this set and lies in $[x]_{R_D^0}$, it follows that
$\bigcup_{0\le\gamma\le D}f_i^{-\gamma}(w)\subseteq [x]_{R_D^0}$,
contradicting our assumption.
\hfill$\mbox{\qed}_{\mbox{\scriptsize Claim}}$

\medskip
We are now ready to define a sequence of equivalence relations $R_D^1, \dots, R_D^n$ and will set $R_D=R_D^n$. For each $1\leq i\leq n$, $R_D^i$ is a refinement of $R_D^{i-1}$ and each $R_D^i$-class is obtained from an $R_D^{i-1}$-class by a number of cuts. For non-surjective cases, we can add extra points to make each region in $F(X)$ have a full preimage, which ensures us to complete the following local cuts. Therefore, we will assume that all functions are surjections in the following proof.

We use the following notation in the inductive definition of $R_D^i$.
For $x\in F(X)$ define its \emph{$i$-th level} to be the least integer $L^i_x\geq 0$ such that
$$
f_i^{L^i_x}(x)\in [x]_{R_D^0}
\quad\text{and}\quad
f_i^{L^i_x+1}(x)\notin [x]_{R_D^0}.
$$
For $x\in F(X)$, define the associated \emph{$i$-th level set} to be 
$$
\mathrm{Lev}_i(x):=\{z\in [x]_{R_D^0}:L^i_z=L^i_x \mbox{ and } \rho_{f_i}(x,z)=0\}.
$$

Now, for $1\le i \le n$, suppose that $R_D^{i-1}$ has been defined and satisfies the conclusion in Claim~\ref{clm:local-segment}. We construct $R_D^i$ as follows. Fix $x\in F(X)$ and let $A=[x]_{R^{i-1}_D}$. First, for $y\in F_i^{-}(A)$, set
$$
[y]_{R_D^{i}}
:=
\bigcup_{0\le j\le D+1+\bigl(L^i_x \bmod (D+1)\bigr)}
f_i^{j}\bigl(\mathrm{Lev}_i(y)\bigr)\cap A.
$$
Then, for $z\in A\setminus \bigcup_{y\in F_i^-(A)}[y]_{R_D^i}$ and $L^i_z\bmod (D+1)=0$, we define
$$
[z]_{R_D^i}=\bigcup_{0\le j\le D}
f_i^{\,j}\bigl(\mathrm{Lev}_i(z)\bigr)\setminus \bigcup_{y\in F_i^-(A)}[y]_{R_D^i}.
$$
This completes the definition of $R_D^i$ as a subequivalence relation of $R_D^{i-1}$.
It is clear from the construction that the conclusion of Claim~\ref{clm:local-segment} still holds with $R_D^i$ replacing $R_D^0$.

Figure~\ref{Fig.6} illustrates the construction for $n=2$, where $f_1$ is injective and $f_2$ is countable-to-one. Here the class $[x]_{R_D^0}$ is refined by two successive families of cuts: first by blue lines parallel to the $f_2$-direction, and then by orange lines parallel to the $f_1$-direction.

\begin{figure}[H]
    \centering
     \includegraphics[trim=2cm 18cm 2cm 1.5cm,clip,width=1\linewidth]{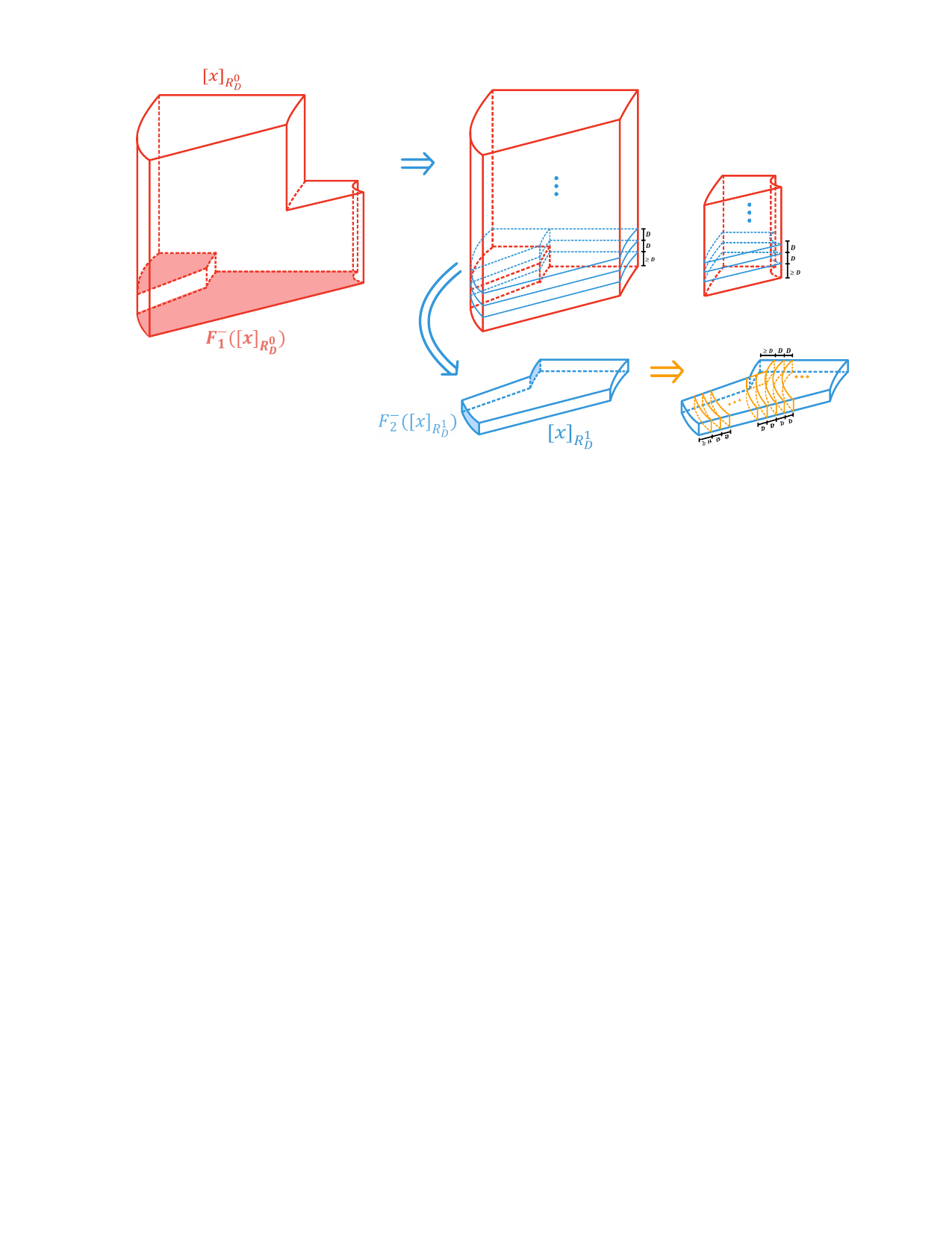}
    \caption{A two-step refinement of $[x]_{R_D^0}$.}
    \label{Fig.6}
\end{figure}

%Consequently, for all $1 \le m\leq n$, $R_D^{m}$ is a Borel subequivalence relation of $R_D^{m-1}$, and every $R_D^{m}$-class has $f_i$-edge length at least $D$ for $1\le i\leq m$ by Claim~\ref{clm:local-segment} and our constructions. It follows from the construction that, for every $R_D^n$-class $B$, every $1\le i\le n$, and all $y,z\in F_i(B)$, either $\rho_{f_i}(y,z)=0$ or $\rho_{f_i}(y,z)\ge D$.

Now set $R_D:=R_D^n$. By induction, the conclusion of
Claim~\ref{clm:local-segment} remains valid with $R_D$ in place
of $R_D^0$. And through our construction, every $R_D$-class is a rootless
region satisfying
$d_i([x]_{R_D})\ge D$
for every $1\le i\le n$.

We next verify that $R_D$ is smooth. Since $G_{f_1, \dots, f_n}\upharpoonright F(X)=(F(X), R)$ is a locally countable Borel graph, it has countable Borel edge chromatic number (\cite[Proposition 4.10]{KechrisSoleckiTodorcevic1999}). Let $c\colon R\to \mathbb{N}$ be a proper Borel edge coloring of $G_{f_1, \dots, f_n}\upharpoonright F(X)$. For each $R_D$-class $A$ and for every $x\in A$, let $\alpha_1(x), \dots, \alpha_n(x)\in \mathbb{N}$ be the unique numbers satisfying 
$$ f_1^{\alpha_1(x)}\cdots f_n^{\alpha_n(x)}(x)=\mbox{root}(A). $$
Let $p(x)$ be the unique path from $x$ to $\mbox{root}(A)$ of length $\ell=\alpha_1(x)+\cdots+\alpha_n(x)$ consisting of vertices
$$ x, f_1(x), \dots, f_1^{\alpha_1(x)}(x), f_1^{\alpha_1(x)}f_2(x), \dots, f_1^{\alpha_1(x)}\cdots f_n^{\alpha_n(x)}(x)=\mbox{root}(A). $$
Write the inverse of $p(x)$ as a sequence of edges $(e_1,\dots, e_\ell)$, and let $\lambda(x)=(c(e_1), \dots, c(e_\ell))$. Now there is a unique $x_A\in A$ with the least $\lambda(x_A)$ in the lexicographic order. The assignment from $x\in A$ to this $x_A$ is a Borel selector. This shows that $R_D$ is smooth.

It remains to prove the last statement of Theorem~\ref{thm:markers}.  That is, if $f_1^D\cdots f_n^D(x)=f_1^D\cdots f_n^D(y)$, then $xR_Dy$.
\begin{claim}\label{laststate}
     For any $x,y\in F(X)$ and $1\le i\le n$, if $$f_1^{\frac14D_1-i(2D+1)}\cdots f_n^{\frac14D_1-i(2D+1)}(x)=f_1^{\frac14D_1-i(2D+1)}\cdots f_n^{\frac14D_1-i(2D+1)}(y),$$ then $xR_D^iy$.
\end{claim}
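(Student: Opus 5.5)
We argue by induction on $i$, using as base case the observation (made just after the definition of $R_D^0$) that $f_1^{\frac14D_1}\cdots f_n^{\frac14D_1}(x)=f_1^{\frac14D_1}\cdots f_n^{\frac14D_1}(y)$ implies $xR_D^0y$. Write $N_i:=\frac14D_1-i(2D+1)$, so $N_{i-1}=N_i+2D+1$. Suppose $f_1^{N_i}\cdots f_n^{N_i}(x)=f_1^{N_i}\cdots f_n^{N_i}(y)$. Applying $f_1^{2D+1}\cdots f_n^{2D+1}$ to both sides gives the hypothesis at level $i-1$, so $xR_D^{i-1}y$ by the induction hypothesis. Let $A=[x]_{R_D^{i-1}}$. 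It remains to show that the $i$-th cut, which subdivides $A$ into slabs along the $f_i$-direction, does not separate $x$ from $y$.

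Set $u:=f_1^{N_i}\cdots f_n^{N_i}(x)$. The key point is that every point of $x$'s backward cone, that is, every $z$ with $f_1^{a_1}\cdots f_n^{a_n}(z)=u$ and $a_j\le N_i$, has the same $f_i$-coordinate relative to the structure. Any two points whose $f_1^{N_i}\cdots f_n^{N_i}$-images agree satisfy $\rho_{f_i}(x,y)=0$. So we will show $L^i_x=L^i_y$. For this, note that $f_i^{L}(x)$ and $f_i^{L}(y)$ have a common image under $f_1^{N_i}\cdots f_n^{N_i}$ after further application of $f_i$. Hence, as long as $L\le 2D+1$, the induction hypothesis at level $i-1$ (applied to $f_i^L(x)$ and $f_i^L(y)$, which share the $N_{i-1}$-image) gives $f_i^L(x)\,R_D^{i-1}\,f_i^L(y)$. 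Thus the two points leave $A$ at the same step, provided the exit level occurs within this window. If it does not, we reduce by stepping forward in blocks of size $D+1$: the relevant quantity for the cut is only $L^i \bmod (D+1)$ and which slab one lies in, and applying $f_i^{D+1}$ preserves the hypothesis with a smaller exponent margin, which is where the loss of $2D+1$ per step comes from. This gives $y\in\mathrm{Lev}_i(x)$, or at least gives that $x$ and $y$ lie in level sets related by the same power of $f_i$.

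Next we check the two cases of the definition of $R_D^i$. In the first case, $x$ lies in $[w]_{R_D^i}$ for some $w\in F_i^-(A)$, so $x\in f_i^j(\mathrm{Lev}_i(w))$ with $j\le D+1+(L^i\bmod (D+1))\le 2D+1$. Pull $y$ back by the same $j$ steps: choose a $j$-th $f_i$-preimage $w'$ of $y$ inside $A$. Then $w'$ and $w$ share an image under $f_1^{N_i+j}\cdots$, which is within the $N_{i-1}$ margin since $j\le 2D+1$. By the level computation above, $w'\in\mathrm{Lev}_i(w)$, and hence $y\in[w]_{R_D^i}$. In the second case, where $x\in[z]_{R_D^i}$ with $L^i_z\equiv 0 \pmod{D+1}$, the same pullback with $j\le D$ places $y$ in $f_i^j(\mathrm{Lev}_i(z))$. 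We must also confirm that $y$ is not removed as part of some $[w]_{R_D^i}$ with $w\in F_i^-(A)$. This follows by symmetry from the first case, since if $y$ were in such a class then so would $x$ be. Choosing the preimages uses the standing surjectivity assumption adopted before the definition of $R_D^i$.

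The main obstacle is the bookkeeping showing $L^i_x=L^i_y$ (equivalently, equal membership in level sets) using only the weaker common-image hypothesis. The levels are measured relative to $R_D^0$-classes, not $R_D^{i-1}$-classes, so we need the base observation at the full exponent $\frac14D_1$ for points up to $2D+1$ steps forward. We also need the fact that $A$'s backward $f_i$-face is an $R_D^0$-level set. We would try to handle this by the face-separation conditions $(i.3)$ and $(n+i.3)$–$(n+i.4)$, exactly as in Claim~\ref{clm:local-segment}.
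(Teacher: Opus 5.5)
Your skeleton matches the paper's proof. From the hypothesis at exponent $N_i=\frac14D_1-i(2D+1)$ you get $xR_D^{i-1}y$ by the induction hypothesis, and you aim to show $L^i_x=L^i_y$ and $\rho_{f_i}(x,y)=0$. You then split according to whether $x$ lies in a class $[w]_{R_D^i}$ with $w\in F_i^-(A)$, where you pull $y$ back $j\le 2D+1$ steps along $f_i$, or in a remainder class.

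\textbf{The gap.} It sits in the step you yourself call the main obstacle, $L^i_x=L^i_y$, and the handling you propose does not work.
\begin{itemize}
\item Your restriction to $L\le 2D+1$ is spurious.
\item The fallback is not a valid argument. You step forward in blocks of $D+1$, track only $L^i \bmod (D+1)$, and speak of a ``smaller exponent margin''. But forward steps do not reduce the margin at all. Also, membership in $\mathrm{Lev}_i$ requires exact equality of $L^i$, not equality modulo $D+1$.
\end{itemize}
The missing observation is one line. Write $g:=f_1^{N_i}\cdots f_n^{N_i}$. By commutativity, $g(f_i^\beta(x))=f_i^\beta(g(x))=f_i^\beta(g(y))=g(f_i^\beta(y))$ for \emph{every} $\beta\ge 0$. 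So every pair of forward iterates satisfies the hypothesis with the same exponent $N_i\le N_{i-1}$. The induction hypothesis then gives $f_i^\beta(x)\,R_D^{i-1}\,f_i^\beta(y)$, hence $f_i^\beta(x)\,R_D^0\,f_i^\beta(y)$, for all $\beta$. (The base observation for $R_D^0$ already suffices, since $N_i\le\frac14D_1$ and $L^i$ is defined via $R_D^0$.) Then $L^i_x=L^i_y$ follows at once, and this is exactly how the paper argues. The loss of $2D+1$ per stage comes only from pulling back along $f_i$-preimages, as in your case analysis, never from forward steps.

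\textbf{Face separation is not needed.} You also do not need $F_i^-(A)$ to be an $R_D^0$-level set, so the appeal to $(i.3)$ and $(n+i.3)$--$(n+i.4)$ points in the wrong direction. The paper uses only the induction hypothesis. Points with a common image at a slightly smaller exponent have $f_i$-preimages that share the $N_{i-1}$-image. Those preimages are therefore $R_D^{i-1}$-related, so such points agree on membership in $F_i^-(A)$. The paper uses this to place the pulled-back point $z_2$ in $F_i^-(A)$ and conclude $y\in[z_2]_{R_D^i}=[z_1]_{R_D^i}$.

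\textbf{Your pullback case.} Write $x=f_i^j(w_1)$ with $w_1\in\mathrm{Lev}_i(w)$; the comparison should be with $w_1$, not $w$. You put the pulled-back point $w'$ directly into $\mathrm{Lev}_i(w)$ via $w'R_D^{i-1}w_1$, $L^i_{w'}=L^i_{w_1}$ and $\rho_{f_i}(w',w_1)=0$. This is correct and even avoids the paper's detour through $F_i^-(A)$. However, it rests on the same level computation. The proposal becomes a complete proof once the one-line argument above replaces your window discussion.
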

\noindent\emph{Proof of the claim}.
The conclusion holds for $i=0$ since if $f_1^{\frac14D_1}\cdots f_n^{\frac14D_1}(x)=f_1^{\frac14D_1}\cdots f_n^{\frac14D_1}(y)$,
then $xR_D^0y$.  Next, we prove the claim by induction.

Suppose the conclusion holds for $i-1$. Therefore, if $$f_1^{\frac14D_1-(i-1)(2D+1)-1}\cdots f_n^{\frac14D_1-(i-1)(2D+1)-1}(u)=f_1^{\frac14D_1-(i-1)(2D+1)-1}\cdots f_n^{\frac14D_1-(i-1)(2D+1)-1}(v),$$ then $u\in F_i^-([u]_{R_D^{i-1}})\iff v\in F_i^-([u]_{R_D^{i-1}})$.

Assume $$f_1^{\frac14D_1-i(2D+1)}\cdots f_n^{\frac14D_1-i(2D+1)}(x)=f_1^{\frac14D_1-i(2D+1)}\cdots f_n^{\frac14D_1-i(2D+1)}(y),$$ then we prove $xR_D^iy$. 

Note that $x\in F_i^-([x]_{R_D^{i-1}})\iff y\in F_i^-([x]_{R_D^{i-1}})$.

Consider the first case that $x,y\in F_1^-([x]_{R_D^{i-1}})$ or $x,y\in [x]_{R_D^{i-1}}\setminus \bigcup_{z\in F_i^-([x]_{R_D^{i-1}})}[z]_{R_D^i}$. For every $\beta\in\mathbb N$, we have 
$f_1^{\frac14D_1-i(2D+1)}\cdots f_n^{\frac14D_1-i(2D+1)}\bigl(f_i^\beta(x)\bigr)
=
f_1^{\frac14D_1-i(2D+1)}\cdots f_n^{\frac14D_1-i(2D+1)}\bigl(f_i^\beta(y)\bigr)$. Thus, $f_i^\beta(x)R_D^{i-1}f_i^\beta(y)$ according to the hypothesis and consequently, $L_x^i=L_y^i$.
Moreover, the assumption guarantees that $\rho_{f_i}(x,y)=0$, implying
$\mathrm{Lev}_i(x)=\mathrm{Lev}_i(y)$. By the definition of $R_D^i$, we have $xR_D^i y$.

We can state that the only remaining case is $x,y\in \bigcup_{z\in F_i^-([x]_{R_D^{i-1}})}[z]_{R_D^i}$. Since if $x\in \bigcup_{z\in F_i^-([x]_{R_D^{i-1}})}[z]_{R_D^i}$, then there are $z_1\in F_i^-([x]_{R_D^{i-1}})$ and $\beta\in (0,2D+1]\cap\mathbb{N}$ such that $f_i^\beta(z_1)=x$. For $z_2\in f_i^{-\beta}(y)$, we have $$f_1^{\frac14D_1-i(2D+1)}\cdots f_n^{\frac14D_1-i(2D+1)}\bigl(f_i^\beta (z_1)\bigr)
=
f_1^{\frac14D_1-i(2D+1)}\cdots f_n^{\frac14D_1-i(2D+1)}\bigl(f_i^\beta (z_2)\bigr).$$ 
Thus $z_2\in  F_i^-([x]_{R_D^{i-1}})$ by our statement, and $y\in[z_2]_{R_D^i}$ by the definition. Similar to the preceding argument, we also have $\mathrm{Lev}_i(z_1)=\mathrm{Lev}_i(z_2)$. It follows that $y\in [z_2]_{R_D^i}=[z_1]_{R_D^i}$. So $x R_D^iy$.
\hfill$\mbox{\qed}_{\mbox{\scriptsize Claim}}$

\medskip
Finally, since
$\frac14D_1-n(2D+1)>D$, the equality
$f_1^D\cdots f_n^D(x)=f_1^D\cdots f_n^D(y)$ implies
$f_1^{\frac14D_1-n(2D+1)}\cdots
f_n^{\frac14D_1-n(2D+1)}(x)
=
f_1^{\frac14D_1-n(2D+1)}\cdots
f_n^{\frac14D_1-n(2D+1)}(y)$.
Claim~\ref{laststate} therefore gives $xR_D^ny$.
Since $R_D=R_D^n$, it follows that $xR_Dy$.
\end{proof}

\begin{remark}
\label{rem:uniform-upper-bound}
Although it is not stated in the statement of Theorem~\ref{thm:markers}, we note that the $R_D$-classes constructed in the proof of Theorem~\ref{thm:markers} have uniformly bounded $f_i$-diameters for all $1\leq i\leq n$. In fact, once the parameter $D$ is given,  all auxiliary parameters, in particular the large scale $D_1$, are fixed. The construction then obtains face-separated regions of uniformly bounded size of about $2CD_1$ and further operations are all cuts which respect this bound. Thus there exists $M_D<\infty$ such that
$$
d_i(A)\le M_D
$$
for every $R_D$-class $A$ and every $1\le i\le n$. Indeed, we can take $M_D=2CD_1$.
\end{remark}

%---------------------
\subsection{Rooted marker regions}

We next refine the rootless regions in Theorem~\ref{thm:markers} further
to obtain rooted regions whose side lengths in all $n$ directions equal to $d$ or $d+1$.

\begin{theorem}\label{thm:rooted-markers}
    Let $X$ be a standard Borel space, and let $d\in \mathbb{N}^+$. For $n\in \mathbb{N}$, suppose that $f_1,\cdots ,f_n:X\to X$ are countable-to-one commuting Borel functions. Assume that for every $r\in\mathbb N^+$, there is a Borel $r$-forward-independent
hitting set with syndeticity $Cr$ for
$G_{f_1,\ldots,f_n}\upharpoonright F(X)$,
where $C\in\mathbb N^+$ is independent of $r$. Then there exists a smooth Borel subequivalence relation
$R_d\subseteq E_{f_1,\cdots,f_n}\upharpoonright F(X)$
such that every $R_d$-class $A$ is a rooted region satisfying
 $d_i(A)=d$ or $d_i(A)=d+1$ for $1\le i \le n$.
\end{theorem}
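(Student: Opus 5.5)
The plan is to start from the rootless decomposition of Theorem~\ref{thm:markers} and cut each rootless class into rooted boxes. Choose $D$ large compared with $d$; concretely $D\ge d(d+1)$ suffices, since every integer $L\ge d(d+1)$ can be written as $L=a d+b(d+1)$ with $a,b\in\mathbb N$. Apply Theorem~\ref{thm:markers} with this $D$ to obtain the smooth relation $R_D$. Each class $A$ is then a rootless region with root $x_A$, offsets $\beta_i\ge 1$, and side parameters $\diam_{f_i}(A)+1\ge D$ in every direction. Remark~\ref{rem:uniform-upper-bound} also gives an upper bound $M_D$, so all these parameters are bounded. Since the Borel selector $A\mapsto x_A$ exists, every point $y\in A$ carries well-defined coordinates $(\alpha_1(y),\dots,\alpha_n(y))$ with $f_1^{\beta_1+\alpha_1}\cdots f_n^{\beta_n+\alpha_n}(y)=x_A$ and $0\le\alpha_i\le\diam_{f_i}(A)$. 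These coordinates are computed Borel-ly from $y$ and $x_A$.

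In each direction $i$, partition the coordinate interval $[0,\diam_{f_i}(A)]$ into consecutive blocks of lengths $d+1$ or $d+2$ points, that is, side length $d$ or $d+1$ in the sense of $\diam_{f_i}$. Use a fixed rule depending only on $\diam_{f_i}(A)$, for example as many blocks of the shorter length as possible first. This is possible because $\diam_{f_i}(A)+1\ge D$ is large. Then declare $y\,R_d\,y'$ iff $y\,R_D\,y'$ and, for each $i$, $\alpha_i(y)$ and $\alpha_i(y')$ lie in the same block. The relation $R_d$ is Borel, since the coordinates are Borel functions and only finitely many block patterns occur. It refines the smooth relation $R_D$ with finitely many pieces per class, each piece determined by a block index vector, so a selector is obtained by combining the selector for $R_D$ with the minimal block index. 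Hence $R_d$ is smooth.

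The next step is to check that each piece is a rooted region with the right side lengths. A piece is a product of blocks $[c_i,c_i+\ell_i]$ with $\ell_i\in\{d,d+1\}$. The candidate root is $r=f_1^{\beta_1+c_1}\cdots f_n^{\beta_n+c_n}$ applied to points whose coordinates equal $(c_1+\ell_1,\dots,c_n+\ell_n)$, i.e.\ the point obtained by moving forward from the piece to its extreme corner. By freeness this $r$ is the same for every point of the piece. Then the piece equals $\{y: f_1^{\gamma_1}\cdots f_n^{\gamma_n}(y)=r,\ 0\le\gamma_i\le\ell_i\}$, because membership in $A$ together with the coordinate constraints is exactly this condition. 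Here one uses that $A$ contains all backward preimages within its box, which is immediate from the definition of a marker region. Thus $\diam_{f_i}=\ell_i$, the piece contains its root, and $\beta=0$. The condition $d_i\ge\ell_i$ (every $f_i^{\alpha}$-preimage of the forward face, $\alpha\le\ell_i$, stays in the piece) follows again from the box description, since preimages only decrease coordinate $i$ and keep the others fixed.

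The main obstacle is the non-surjective case. The region $A$ may be defective, and the lower bound $d_i(A)\ge D$ in Theorem~\ref{thm:markers} is only guaranteed relative to preimages that actually exist. One must verify that cutting in coordinates still yields $d_i\ge \ell_i$ and that each piece is nonempty and rooted. Pieces whose points all lack preimages vanish harmlessly, and the root may fail to be in the piece only if the corner itself is missing, which cannot happen because the corner is a forward image of any piece point. I would handle this exactly as the paper does in the proof of Theorem~\ref{thm:markers}: work with the formal coordinate box and take intersections with $F(X)$, checking that the defining conditions of $d_i$ refer only to existing preimages.
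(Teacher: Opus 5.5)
\textbf{Gap: the coordinate boxes are not rooted regions.} Cutting each $R_D$-class into coordinate boxes matches the paper's strip-cutting. The paper cuts direction by direction using the levels $L^i_y$; you cut all directions at once in coordinates relative to $\operatorname{root}(A)$. The failure is the step where you identify each box as a rooted region.

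The claim ``by freeness this $r$ is the same for every point of the piece'' is false as soon as some $f_i$ is not injective. Freeness only says that the exponent vector $(\alpha_1(y),\dots,\alpha_n(y))$ of a point is unique. It does not say that only one point has a given exponent vector.

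Your piece is $P=\{y\in A:\alpha_i(y)\in[c_i,c_i+\ell_i]\ \text{for all } i\}$. Its top layer, where $\alpha(y)=(c_1,\dots,c_n)$, is the whole set $f_1^{-(\beta_1+c_1)}\cdots f_n^{-(\beta_n+c_n)}(x_A)$. This set typically has many points; for $n=1$ and $f$ surjective and exactly $2$-to-$1$, it has $2^{\beta_1+c_1}\ge 2$ points.

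Two distinct top-layer points cannot coexist in a rooted region. Suppose $P$ were rooted with root $r\in P$ and $z$ is in the top layer. Then $f^{\gamma}(z)=r$ for some $\gamma$, and freeness gives $\alpha(z)=\alpha(r)+\gamma$. Since $\alpha(r)\ge c=\alpha(z)$, this forces $\gamma=0$ and $z=r$. So $P$ is really a rootless region with root $x_A$ and offsets $\beta_i+c_i>0$, equivalently a disjoint union of several rooted boxes. It is not a single rooted region, and your assertions that ``the piece contains its root'' and ``the corner cannot be missing'' both presuppose a unique corner.

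\textbf{The fix is one more splitting step.} This is exactly the last step of the paper's proof. For each $y\in\bigcap_{i}F_i^+(P)$, set $[y]_{R_d}=\{z\in P:\exists\,\alpha_1,\dots,\alpha_n\in\mathbb N,\ f_1^{\alpha_1}\cdots f_n^{\alpha_n}(z)=y\}$.

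\begin{itemize}
\item Here freeness does the right job. Each $z\in P$ reaches exactly one such $y$, namely $f_1^{\alpha_1(z)-c_1}\cdots f_n^{\alpha_n(z)-c_n}(z)$, so this is a Borel partition.
\item Each class is a rooted region with root $y$ and $\diam_{f_i}=\ell_i$.
\item The inequality $d_i\ge\ell_i$ is inherited, because the $f_i^{\alpha}$-preimages ($\alpha\le\ell_i$) of a forward-face point of the class stay in the box and still map to $y$.
\item Smoothness becomes trivial, since $z\mapsto y$ is itself a Borel selector.
\end{itemize}

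With this step added your argument goes through. The non-surjective defects still need the same handling, which the paper also only sketches.
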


\begin{proof}
Choose an integer $D>d^2$, and let
$R'_D\subseteq E_{f_1,\cdots,f_n}\upharpoonright F(X)$
be the Borel subequivalence relation given by Theorem~\ref{thm:markers}.
Thus every $R'_D$-class is a rootless region $A$ satisfying
 $d_i(A)\ge D$ for $1\le i \le n$.

We again define a sequence of equivalence relations $R^0_d, R^1_d, \dots, R^n_d$. Let $R_d^0=R'_D$. Each $R^i_d$ is a refinement of $R^{i-1}_d$. For each $0\leq i\leq n$, we maintain the inductive hypothesis that for $j< i$, the $f_{j}$-side lengths of the $R^{j}_d$-classes equal to $d$ or $d+1$ and for $j\ge i$, the $f_{j}$-side lengths of the $R^i_d$-classes are at least $D$. 

Now suppose $x\in F(X)$ and let $B=[x]_{R^{i-1}_d}$. We define the $R^i_d$-classes which are subsets of $B$. By the inductive hypothesis, $d_i(B)\geq D$. By an elementary fact in number theory, we can write
$$
d_i(B)=ad+b(d+1)\quad \mbox{ for some integers $a,b\ge 0$.}
$$
At least one of $a$ and $b$ is positive; without loss of generality, we assume $a>0$. For $y, z\in B$, set
$$yR_d^{i}z\iff \left\lfloor \frac{L_y^i}{d}\right\rfloor=\left\lfloor\frac{L_z^i}{d}\right\rfloor\leq a 
\mbox{ or } \left\lfloor\frac{L_y^i-ad}{d+1}\right\rfloor=\left\lfloor\frac{L_z^i-ad}{d+1}\right\rfloor\ge0.$$
This finishes the definition of $R_d^i$. It is clear that the inductive hypothesis is maintained.

We illustrate the construction in Figure~\ref{Fig.7}. When $n=2$, $R_d^1$ is obtained by cutting each rootless
region $[x]_{R_d^0}$ into strips in the $f_1$-direction. The resulting
$f_1$-edge lengths are $d$ or $d+1$. Then $R_d^2$ is obtained by
cutting each $R_d^1$-class into strips in the $f_2$-direction, again with
edge lengths $d$ or $d+1$; see Figure~\ref{Fig.7}. The figure is drawn in the case where $f_1$ is injective and $f_2$ is
countable-to-one.

\begin{figure}[H]
\centering
 \includegraphics[trim=3.5cm 18.1cm 6cm  2.5cm,clip,width=0.6\linewidth]{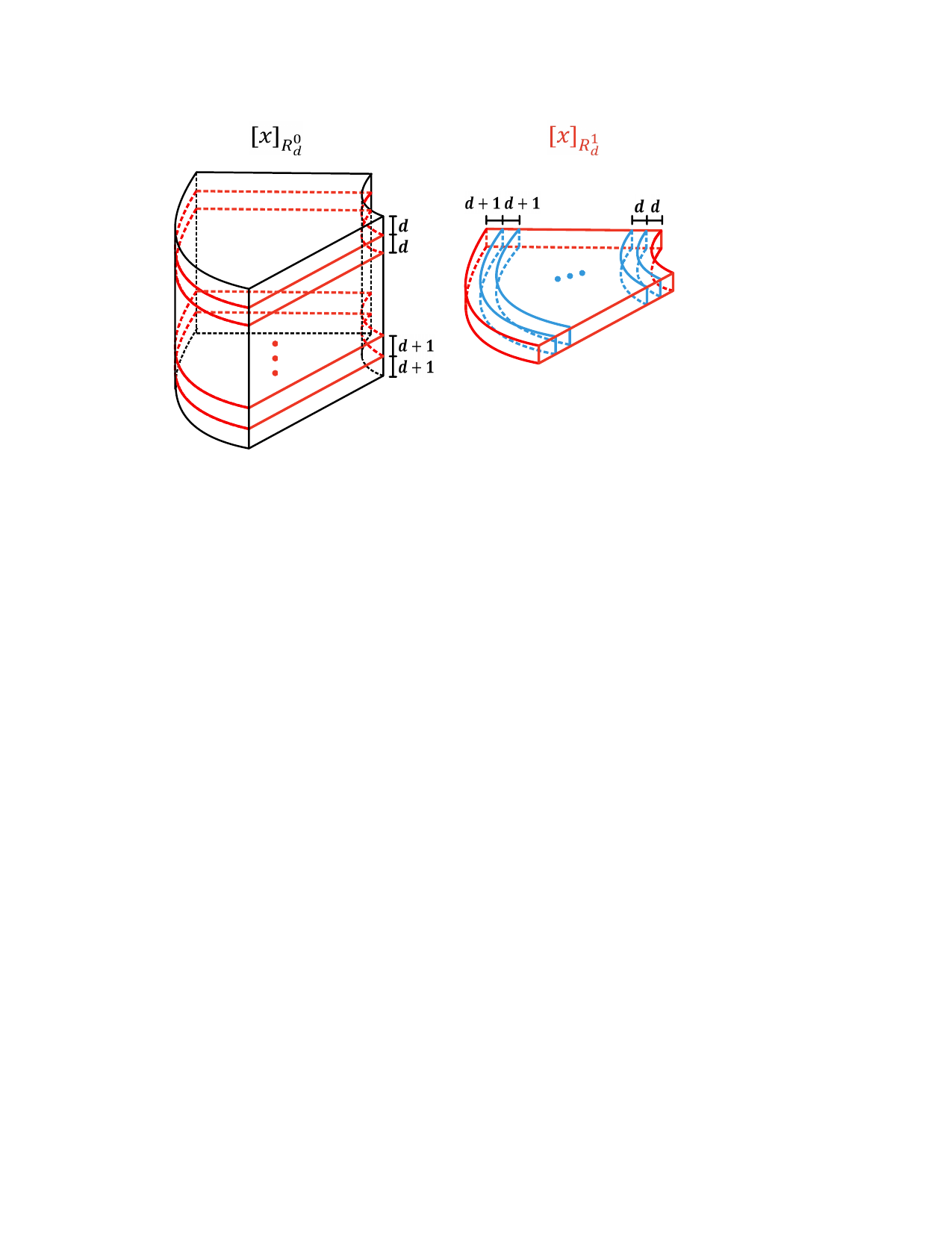}
\caption{A two-step refinement of $[x]_{R_d^0}$.
}
\label{Fig.7}
\end{figure}

Each $R_d^{n}$-class $A$ is again a rootless region, and by construction it now satisfies
$$
d_i(A)=d\text{ or } d_i(A)=d+1
\quad\text{for}\quad
1 \le i \le n.
$$
We finally split each $R_d^n$-class into rooted regions. Let $A$ be an
$R_d^n$-class. For each
$$y\in \bigcap\limits_{1\le i\le n} F_i^{+}(A),$$
define
$$[y]_{R_d}
\ :=\
\bigl\{z\in A:\ \exists\, \alpha_1,\cdots, \alpha_n\in\mathbb{N}\ \text{ such that }\ f_1^{\alpha_1}\cdots f_n^{\alpha_n}(z)=y\bigr\}.$$
Then $R_d$ is a Borel subequivalence relation of $R_d^{n}$, and every $R_d$-class $B$ is a rooted region satisfying
$d_i(B)=d$ or $d_i(B)=d+1$
for $1 \le i \le n$.

The assignment from each $R_d$-class $B$, which is a rooted region, to its root, $\mbox{root}(B)$, is obviously a Borel selector. Thus $R_d$ is smooth.
\end{proof}

%-------------------------
\section{Applications}
\subsection{Borel asymptotic dimension}
Given a locally countable Borel graph, one of the main problems is to determine whether its 
connectedness relation is hyperfinite. In this subsection, we prove
a finite Borel asymptotic dimension result for the graphs considered in this
paper, and deduce their hyperfiniteness as a consequence.

The following theorem was proved in \cite{GrebikHiggins2026}.

\begin{theorem}[{\cite[Theorem 1.2]{GrebikHiggins2026}}]
\label{thm:singlefunction}
Let $X$ be a standard Borel space, and let $f:X\to X$ be a
countable-to-one Borel function. Then $G_f\upharpoonright F(X)$ has finite
Borel asymptotic dimension if and only if for every $r\in\mathbb N^+$, there
exists a Borel $r$-forward-independent hitting set.
\end{theorem}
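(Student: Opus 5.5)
Since Theorem~\ref{thm:singlefunction} is quoted from \cite{GrebikHiggins2026}, we only sketch how we would prove the two directions. Throughout, $\rho$ is the graph metric on $G_f\upharpoonright F(X)$. On $F(X)$ every forward orbit is injective, so each component is a rooted-at-infinity tree: every point has a unique forward ray.

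\emph{Hitting sets $\Rightarrow$ finite Borel asymptotic dimension.} This is the one-function analogue of the marker arguments of Section~4, with one difference: no syndeticity is assumed, so the classes cannot just be the cells below a hitting point. Fix $r$ and a Borel $R$-forward-independent hitting set $H$, with $R\gg r$ (say $R=10r$). For $y\in F(X)$, let $k(y)$ be least with $f^{k(y)}(y)\in H$, and set $h(y)=f^{k(y)}(y)$. Both are Borel, by Luzin--Novikov applied to the countable-to-one maps $f^k$.

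Cut the backward tree below each $x\in H$ into levels of height $R$. The anchor of $y$ is $a(y)=f^{\,k(y)\bmod R}(y)$, and we put $y\mathrel{E_r}y'$ iff $a(y)=a(y')$. Each class lies within forward distance $<R$ below a single point, so its $\rho$-diameter is $<2R$. The class is smooth, since $a$ itself is a Borel selector.

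The step to verify is that $B_\rho(y,r)$ meets at most boundedly many classes.
\begin{itemize}
\item Points $z$ with $\rho(z,y)\le r$ have forward rays merging with that of $y$ within $r$ steps.
\item By $R$-forward-independence, either $h(z)=h(y)$, or the hits lie on the common ray at mutual distance $>R$, so $k$ changes in a controlled way.
\item A single level cut cannot separate $B(y,r)$ into unboundedly many pieces. The reason is that anchors off the ray of $y$ lie strictly below the merge point.
\end{itemize}

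\emph{Main obstacle.} The difficulty is branching. Many distinct anchors at the same level can lie just below the merge point, all within distance $r$ of $y$. I would first fix this by taking two level systems with offsets $0$ and $R/2$ and, for each $y$, choosing the system in which $y$ is at distance $\ge R/4$ from a level boundary. By the merge-point argument, the ball is then contained in one cell of that system. Doing this choice Borel-ly and consistently, i.e.\ turning it into a single equivalence relation whose balls meet $\le m+1$ classes, is the delicate combinatorial core. I expect $m=1$ or $2$ to suffice.

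\emph{Finite Borel asymptotic dimension $\Rightarrow$ hitting sets.} Given $r$, take an equivalence relation $E$ witnessing Borel asymptotic dimension $\le m$ at scale $Kr$ for $K\gg m$, with class diameters $\le d$. Along each forward ray, the sequence of $E$-classes visited changes at least every $d+1$ steps, and a window of length $Kr$ meets at most $m+1$ classes. Hence some class $Q$ is exited within boundedly many steps. We let $H$ be the set of exit points $y$ with $y\in Q$ and $f(y)\notin Q$, restricted to a Borel color class so that exits from different classes are forward-separated. Forward-independence follows because two such exit points within forward distance $r$ of a common point would force more than $m+1$ classes into one $Kr$-ball. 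If $E$ is not smooth, we use a countable Borel coloring of $G^{Kr}$, as in the lemma above, to make these choices Borel.
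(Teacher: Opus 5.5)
The paper gives no proof of this statement; it is quoted from \cite{GrebikHiggins2026}. Your sketch therefore has to stand on its own, and in each direction it stops at the step that carries the content.

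\emph{$(\Leftarrow)$.} The obstacle is not the Borel bookkeeping you single out. Suppose every $y$ had a system whose cut set misses $B(y,r)$. Then, grouping such points by cell, each system gives a uniformly bounded family of $r$-separated pieces, and the standard enlargement argument turns finitely many such families into one equivalence relation. The real problem is that two offsets do not give such a system for every $y$. The level $k$ jumps at every point of $H$, while $R$-forward-independence says nothing about points on different branches. This is also why your second bullet is false.

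Take $h\in H$, a preimage $y$ of $h$, and a preimage $z\notin H$ of $f(h)$ with $z\neq h$. Then $h(y)=h$ lies between $y$ and the merge point $f(h)=f(z)=f^2(y)$, off the ray of $z$. Consequently $k(z)=k(y)+g-1$ rather than $k(y)-1$, where $g>R$ is the gap from $h$ to the next point of $H$. If $g=3R/2+1$, then $k(h)=0$ and $k(f(h))=3R/2$, so $h$ is within distance $1$ of a cut in both systems. If instead each tree below $x\in H$ is cut separately, every point of $H$ is a cut in every system.

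Your framework can be repaired with a third offset:
\begin{itemize}
\item Take $c\in\{0,R/3,2R/3\}$ with $R>6r$, and let $Q_c=\{u:k(u)\equiv c\pmod R\}$.
\item Put $y,y'$ in the same $c$-cell iff the first points of $Q_c$ on their rays coincide. Such a point occurs within $2R$ steps, and cells may straddle points of $H$.
\item If $B(y,r)\cap Q_c=\varnothing$, then $B(y,r)$ lies in one $c$-cell, because every ray from $B(y,r)$ reaches $f^r(y)$ without leaving the ball.
\item For every $y$, the values of $k$ on $B(y,r)$ lie in $[0,2r]\cup[K,K+2r]$ with $K=k(f^r(y))$. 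This is according to whether the ray meets $H$ before $f^r(y)$ or not.
\item Each interval contains at most one of the three residues, so some offset is always missed, and one gets $\operatorname{asdim}_B\le 2$.
\end{itemize}
Unlike Theorem~\ref{thm:asdim}, which is the paper's closest argument, this route needs no syndeticity.

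\emph{$(\Rightarrow)$.} The forward-independence claim is unsupported.
\begin{itemize}
\item If $y$ and $f^j(y)$ with $j\le r$ are both exit points, only the classes $[y]_E$ and $[f^j(y)]_E$ are involved. Nothing pushes a ball past $m+1$ classes.
\item A class need not meet a ray in an interval, so even a single class can have exits $y$ and $f^j(y)$.
\item Restricting to exits of one colour discards points, and you give no reason why every ray still meets the remaining ones.
\item The colouring lemma of Section~4 that you invoke presupposes a forward-independent hitting set, so using it here is circular.
\end{itemize}

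The missing tool is Lemma~\ref{lem:qk}. Let $\vec G_r$ be the directed graph with $x\to f^j(x)$ for $1\le j\le r$. A quasi-kernel of $\vec G_r$ is exactly an $r$-forward-independent set reached from every point within $2r$ steps. So it suffices to find a finite Borel colouring of $\vec G_r$.

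Use the coloured-cover form of asymptotic dimension: $m+1$ Borel families of uniformly bounded pieces, with distinct pieces in the same family more than $r$ apart. Let $i(x)$ be the least family having a piece that contains $x$. Let $\lambda(x)$ be the distance from $x$ up to the first point common to all rays from that piece. Then $x\mapsto(i(x),\lambda(x)\bmod(r+1))$ is a proper colouring. Indeed, if $x$ and $f^j(x)$ have the same $i$, they lie in the same piece, so their $\lambda$-values differ by $j$.

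For finite-to-one $f$, Lemma~\ref{lem:color} gives such a colouring directly. The underlying graph of $\vec G_r$ is locally finite, its metric is bi-Lipschitz to $\rho$, and it has chromatic number at most $r+1$. This route even yields syndeticity $2r$; compare Proposition~\ref{prop:42}.
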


%The above theorem characterizes finite Borel asymptotic dimension for graphs generated by a single countable-to-one Borel function on the free part. 
Here we show a strengthened version in which the hitting set is required to have a linear syndeticity bound.

\begin{proposition} \label{prop:42}
     Let $X$ be a standard Borel space, and let $f:X\to X$ be a
countable-to-one Borel function. Then $G_f\upharpoonright F(X)$ has finite Borel asymptotic dimension if and only if, for every $r\in\mathbb N^+$, there is a Borel $r$-forward-independent hitting set with syndeticity $4r$.
\end{proposition}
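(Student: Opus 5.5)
The direction ``$\Leftarrow$'' is immediate. A Borel $r$-forward-independent hitting set with syndeticity $4r$ is in particular a Borel $r$-forward-independent hitting set. So if such sets exist for every $r\in\mathbb N^+$, Theorem~\ref{thm:singlefunction} gives that $G_f\upharpoonright F(X)$ has finite Borel asymptotic dimension.

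For ``$\Rightarrow$'', fix $r\in\mathbb N^+$. By Theorem~\ref{thm:singlefunction}, there is a Borel $2r$-forward-independent hitting set $H'\subseteq F(X)$, with no syndeticity bound. For $y\in F(X)$ let
$$h(y):=\min\{k\in\mathbb N: f^k(y)\in H'\},$$
which is finite since $H'$ is hitting, and is Borel as a function of $y$. Define
$$H:=\{y\in F(X): h(y)>0 \text{ and } 2r\mid h(y)\}.$$
This is Borel. The plan is to check the two required properties directly, using only the linear geometry of forward orbits of a single function. The key structural fact is this: if $p\in H'$ and $q=f^c(p)$ is the next point of $H'$ on the forward orbit of $p$, then $c\ge 2r+1$ by $2r$-forward-independence (with $n=1$ the condition just says $f^a(p)\ne q$ for $a\le 2r$).

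\emph{Forward-independence.} With one function, the condition $\alpha\beta=0$ means one of two distinct points $y,z\in H$ equals $f^a$ of the other with $1\le a\le r$; say $z=f^a(y)$. There are two cases.
\begin{itemize}
\item If $a\le h(y)$, then $h(z)=h(y)-a$. Since both values are multiples of $2r$ and $0<a\le r$, this is impossible.
\item If $a>h(y)$, then $a>h(y)\ge 2r>r$, because $h(y)>0$ and $2r\mid h(y)$. This contradicts $a\le r$.
\end{itemize}
Excluding $h=0$, i.e.\ not putting $H'$ itself into $H$, is exactly what blocks conflicts across a point of $H'$. This is the one delicate choice in the construction.

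\emph{Syndeticity $4r$.} Take $x\in F(X)$. There are two cases.
\begin{itemize}
\item If $h(x)\ge 2r$, then among $x,f(x),\dots,f^{2r-1}(x)$ the values of $h$ run through $2r$ consecutive positive integers. One of them is a positive multiple of $2r$, so $H$ is reached within $2r-1$ steps.
\item If $h(x)<2r$, let $p=f^{h(x)}(x)\in H'$ and let $g\ge 2r+1$ be the gap to the next point of $H'$. The points $f^c(p)$ for $1\le c\le 2r$ have $h$-values $g-1,\dots,g-2r$. These are $2r$ consecutive integers, all at least $1$, so one of them is a positive multiple of $2r$. Hence $H$ is reached within $h(x)+2r<4r$ steps.
\end{itemize}
The existence of the next point of $H'$ follows by applying the hitting property to $f(p)$. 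The only step requiring care is the gap bound $g\ge 2r+1$, which is why I invoke Theorem~\ref{thm:singlefunction} at scale $2r$ rather than $r$.
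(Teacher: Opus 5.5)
Your proof is correct, but it takes a different route from the paper's.

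\textbf{The paper's route.} It starts from a merely $r$-forward-independent hitting set $H_0$ and builds $H=\bigcup_m H_m$ iteratively. At each stage it inserts $f^{k-2r}(x)$ whenever $k\ge 4r$ is the first time the forward orbit of $x$ meets $H_m$.

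\textbf{Your route.} You invoke the Grebik--Higgins characterization at scale $2r$ and make a one-shot definition: keep exactly the points whose forward distance $h$ to $H'$ is a positive multiple of $2r$. Everything then reduces to two facts: the identity $h(f^a(y))=h(y)-a$ for $a\le h(y)$, and the gap bound $g\ge 2r+1$. Both live on a single forward ray, so the backward branching of a non-injective $f$ never enters.

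\textbf{What your route buys.} It is more robust. The paper's verification only shows that a newly inserted point has no point of $H$ fewer than $2r$ steps ahead of it. It does not exclude a new point landing at most $r$ steps after an old one, which can happen when a long side branch merges just after an old point. For instance, take $r=1$ with $w,f^3(w)\in H_0$ and $f(w),f^2(w)\notin H_0$. Take $x$ with $f^2(x)=f(w)$, $f(x)\neq w$, and $x,f(x)\notin H_0$. Then the first hitting time of $x$ is $k=4=4r$, so $f(w)$ is put into $H_1$ right next to $w$. This configuration is realized, for example, on $\mathbb{Z}$ with one extra two-point hair and $H_0=3\mathbb{Z}$.

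Your construction has no such issue. Its only cost is requiring $2r$-independence of the input set, which is free since the characterization holds for every $r$. You also get syndeticity $4r-1$.
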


\begin{proof}
 By Theorem~\ref{thm:singlefunction}, it suffices to show that for any $r\in \mathbb N^+$, if there is a Borel $r$-forward-independent hitting set, then there is a Borel $r$-forward-independent hitting set with syndeticity $4r$. 
 
Fix $r\in \mathbb N^+$. Let $H_0$ be a Borel $r$-forward-independent hitting set. Inductively define 
$$H_{m+1}=H_m\cup\left\{f^{k-2r}(x)\colon x\in F(X), \mbox{$k\in\mathbb{N}$ is the least such that $f^k(x)\in H_m$, and $k\geq 4r$}\right\}.$$
 Thus we have an increasing sequence of Borel subsets of $F(X)$. Let $H=\bigcup_m H_m$. 

We verify that $H$ is a Borel $r$-independent hitting set with syndeticity $4r$. 

 Note that if $x\in H_m\setminus H_{m-1}$, then  $f^{2r}(x)\in H_{m-1}$ and there is no $j<2r$ such that $f^j(x)\in H_{m-1}$. Thus, for any $x\in H$, there is no $j<2r$ such that $f^j(x)\in H$. Otherwise for $x\in H_m\setminus H_{m-1}$, if $f^j(x)\in H$ for some $0<j<2r$, we have that $f^j(x)\in H_k\setminus H_{k-1}$ for some $k\ge m$. However, $f^{2r}(x)\in H_{m-1}$ leads to a contradiction since $f^j(x)\in H_k$, and $f^{2r-j}(f^j(x))\in H_{m-1}\subseteq H_{k-1}$.
It follows that $H$ is $r$-forward-independent.

Furthermore, for any $x\notin H$, by the hitting property of $H_0$, there is some $k\in\mathbb{N}$ which is the least such that $f^k(x)\in H_0$. By our construction, for
$$m=\left\lfloor \frac{k}{2r}\right\rfloor-1,$$ 
there is some $2r\leq j<4r$ such that $f^j(x)\in H_m$. Thus $H$ has syndeticity $4r$.
\end{proof}

%Thus, in the single-function case, finite Borel asymptotic dimension is equivalent to the existence of $r$-forward-independent hitting sets with a linear syndeticity bound. 
%The next theorem shows that the similar marker condition is sufficient for finite Borel asymptotic dimension for graphs generated by finitely many commuting Borel functions.

The next theorem generalizes the sufficiency direction of the above proposition to graphs generated by finitely many commuting Borel functions.

\begin{theorem}
\label{thm:asdim}
Let $X$ be a standard Borel space. For $n\in\mathbb{N}$, suppose that $f_1,\cdots,f_n:X\to X$ are countable-to-one commuting Borel functions, and let $\rho$ be the graph metric on $G_{f_1,\cdots,f_n}\upharpoonright F(X)$. Assume that for every $r\in\mathbb N^+$, there is a Borel $r$-forward-independent hitting set with syndeticity $Cr$ for $G_{f_1,\ldots,f_n}\upharpoonright F(X)$, where $C\in\mathbb N^+$ is independent of $r$. 
Then $(F(X),\rho)$ has finite strong Borel asymptotic dimension. In particular,
$E_{f_1,\cdots,f_n}\upharpoonright F(X)$ is hyperfinite.
\end{theorem}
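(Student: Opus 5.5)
The plan is to derive strong Borel asymptotic dimension at most $n$ (or some bound depending only on $n$) from Theorem~\ref{thm:rooted-markers}, using several shifted copies of the rooted marker decomposition. Then the hyperfiniteness statement follows at once from Lemma~\ref{lem:hyp}, since $G_{f_1,\dots,f_n}\upharpoonright F(X)$ is locally countable (each $f_i$ is countable-to-one).

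Fix $r>0$ and choose $d\gg r$, say $d\geq 10(n+1)r$. Apply Theorem~\ref{thm:rooted-markers} to get a smooth $R_d$ whose classes are rooted regions with $d_i(A)\in\{d,d+1\}$. First check that each class has uniformly bounded $\rho$-diameter: every $y\in A$ reaches $\operatorname{root}(A)$ by $f_1^{\alpha_1}\cdots f_n^{\alpha_n}$ with $\alpha_i\le d+1$, so $\operatorname{diam}_\rho(A)\le 2n(d+1)$. The difficulty is condition (ii): a ball $B_\rho(x,r)$ may meet many $R_d$-classes, because in backward directions a point has countably many preimages, each possibly lying in a different rooted region. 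Hence a single decomposition does not suffice, and the boundaries must be controlled differently.

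To handle this, I would use the structure of the regions. The partition into $R_d$-classes is \emph{forward-determined} in the following sense, which I would extract from the construction (cuts made along levels $L^i$ and the last statement of Theorem~\ref{thm:markers}). Points $y,z$ with the same forward images far enough out lie in the same class unless they are separated by a face. Then I would build $n+1$ smooth relations $E^0,\dots,E^n$ from shifted decompositions. For $j=0,\dots,n$, let $E^j$ be the relation whose classes are the $R_d$-classes with the $r$-neighborhoods of the faces of ``dimension'' $j$ removed or reassigned, in the standard Gao--Jackson / Conley et al.\ style: points within $\rho$-distance $r$ of exactly $j$ face-types get grouped by the corresponding lower-dimensional face pieces. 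Here the faces $F_i^\pm$ are separated by $\geq d\gg r$ thanks to the side lengths.

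Now take $E_r$ to be the union of these pieces, i.e.\ a partition of $F(X)$ into pieces of types $0,\dots,n$. Each piece has bounded diameter and, within a type, distinct pieces are $\rho$-separated by more than $2r$. Then $B_\rho(x,r)$ meets at most one piece of each type, so $m=n$ works. Smoothness of each piece relation follows as in the proof of Theorem~\ref{thm:markers}, by choosing a lexicographically least edge-color path, or by noting that each piece is assigned canonically to a root of an $R_d$-class, which is a Borel selector.

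The main obstacle is the separation claim: two pieces of the same type near preimage branches must be at distance $>2r$, even though backward branching lets a single point neighbor many regions. I would try to prove this via the forward-independence of the roots. Roots of the $R_d$-classes form a $d$-forward-independent set, as remarked after Theorem~1.1. Any two points at distance $\le 2r$ have a common forward image within $2r$ steps, so their classes' roots have a forward meeting within $d$ steps, which is impossible unless the configuration is the face-adjacent one already accounted for by the type.
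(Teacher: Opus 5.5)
Your argument rests entirely on the separation claim, and that claim is neither set up nor proved. The relations $E^0,\dots,E^n$ and the ``lower-dimensional face pieces'' are never actually defined. The justification you sketch also fails. $d$-forward-independence only forbids $f_1^{\alpha_1}\cdots f_n^{\alpha_n}(u)=f_1^{\beta_1}\cdots f_n^{\beta_n}(u')$ when $\alpha_i\beta_i=0$ for every $i$. Two points at distance $\le 2r$ in general only have a common forward image in which some coordinate has both exponents positive, and without injectivity such exponents cannot be cancelled.

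This is exactly the branching you set out to control. In the last step of the proof of Theorem~\ref{thm:rooted-markers}, all points of $\bigcap_i F_i^+(A)$ of an $R^n_d$-class $A$ have the same image under $f_1^{\beta_1}\cdots f_n^{\beta_n}$, and each becomes the root of its own $R_d$-class. So whenever $f_i$ is not injective, $f_i$-siblings $u\neq u'$ at distance $2$ are roots of different classes. This does not contradict forward-independence, since siblings never violate it. Consequently a single backward face abuts arbitrarily many regions (countably many, for countable-to-one maps), and their faces need not line up with each other or with those of the region above. Nothing in your proposal shows that face pieces built from this configuration have bounded diameter, or that pieces of the same type are more than $2r$ apart. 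The bound $m=n$ is therefore unsupported.

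The missing idea is one you already mention. You cite the last clause of Theorem~\ref{thm:markers}, but then pass to rooted regions, which destroys it. Take instead $E_r:=R_D$ from Theorem~\ref{thm:markers} with $D>2r$. It is smooth, and its classes have bounded $\rho$-diameter by Remark~\ref{rem:uniform-upper-bound}. Because $f_1^D\cdots f_n^D(y)=f_1^D\cdots f_n^D(y')$ implies $yR_Dy'$, the whole set $f_1^{-2r}\cdots f_n^{-2r}(f_1^r\cdots f_n^r(x))$ lies in one class, so backward branching is absorbed.

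Fix $y_0$ in that set. Every $z\in B_\rho(x,r)$ has a common image with some $f_1^{h_1}\cdots f_n^{h_n}(y_0)$, $h_i\in[0,2r]$, under a map $f_1^{\gamma_1}\cdots f_n^{\gamma_n}$ with all $\gamma_i\le 2r<D$, hence is $R_D$-equivalent to it. Since $d_i\ge D>2r$, a forward $f_i$-segment of length at most $2r$ meets at most two classes. Hence $B_\rho(x,r)$ meets at most the $2^n$ classes of the points $f_1^{j_1r}\cdots f_n^{j_nr}(y_0)$ with $j_i\in\{0,2\}$, and fewer if no such $y_0$ exists. This gives $\operatorname{asdim}^*_B(F(X),\rho)\le 2^n-1$, and Lemma~\ref{lem:hyp} yields hyperfiniteness. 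A single decomposition suffices, and no type decomposition is needed.
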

\begin{proof}
Fix an integer $r$ and $ D> 2r$. Let $E_r:=R_D\subseteq E_{f_1,\cdots,f_n}\upharpoonright F(X)$
be the smooth Borel subequivalence relation given by Theorem~\ref{thm:markers} for $D$. Then every $E_r$-class is a rootless region $A$ satisfying
$d_i(A) \ge D$
for $1 \le i \le n$.
Moreover, we have $ xE_r y$ if $f_1^D\cdots f_n^D(x)=f_1^D\cdots f_n^D(y)$. 
By Remark~\ref{rem:uniform-upper-bound}, the $E_r$-classes have uniformly
bounded side lengths. Since each $E_r$-class is a rootless region, this implies
that the $E_r$-classes have uniformly bounded $\rho$-diameter.

Next we show that every ball $B_{\rho}(x,r)$ meets at most $2^n$ many $E_r$-classes. Fix $x\in F(X)$. By the property of $E_r$, we have that $f_1^{-2r}\cdots f_n^{-2r}(f_1^r\cdots f_n^r(x))$ is contained in a single $E_r$-class. Assume first that
$f_1^{-2r}\cdots f_n^{-2r}
\bigl(\{f_1^r\cdots f_n^r(x)\}\bigr)
\neq\varnothing$,
and choose $y_0$ in this set. We claim that
$$ B_{\rho}(x, r)\subseteq \bigcup_{j_1, \dots, j_n\in\{0,2\}} \left[f_1^{j_1r}\cdots f_n^{j_nr}(y_0)\right]_{E_r}, $$
which clearly implies that $B_{\rho}(x, r)$ meets at most $2^n$ many $E_r$-classes. To see the claim, let $z\in B_{\rho}(x, r)$. Then there are $\alpha_1,\dots, \alpha_n, \beta_1, \dots, \beta_n\in [0, r]$ such that $f_1^{\alpha_1}\dots f_n^{\alpha_n}(z)=f_1^{\beta_1}\cdots f_n^{\beta_n}(x)$. Since $f_1^{2r}\cdots f_n^{2r}(y_0)=f_1^r\cdots f_n^r(x)$, we have that 
$$ f_1^{\alpha_1+r-\beta_1}\cdots f_n^{\alpha_n+r-\beta_n}(z)=f_1^r\cdots f_n^r(x)=f_1^{2r}\cdots f_n^{2r}(y_0). $$
Since $z, y_0\in F(X)$, this implies that there are $h_1, \dots, h_n\in [0, 2r]$ such that $zE_rf_1^{h_1}\cdots f_n^{h_n}(y_0)$. Since the right side set $\bigcup_{j_1, \dots, j_n\in\{0,2\}} \left[f_1^{j_1r}\cdots f_n^{j_nr}(y_0)\right]_{E_r}$ is $E_r$-invariant, it suffices
to prove that
$f_1^{h_1}\cdots f_n^{h_n}(y_0)$
belongs to it.
Without loss of generality, we can assume $z=f_1^{h_1}\cdots f_n^{h_n}(y_0)$. 
Let $y_1=f_2^{h_2}\cdots f_n^{h_n}(y_0)$. Then $z=f_1^{h_1}(y_1)$. Since each $E_r$-class is a rootless region with $f_1$-side length at least $D>2r$, and since $0\leq h_1\leq 2r$, we have that the set $\{y_1, f_1(y_1), \dots, f_1^{h_1}(y_1)=z\}$ is contained in at most two $E_r$-classes. If it is contained in a single $E_r$-class, we actually have $zE_ry_1$, or $z\in [y_1]_{E_r}$; if it is contained in two different $E_r$-classes, then we have $zE_r f_1^{2r}(y_1)$, or $z\in [f_1^{2r}(y_1)]_{E_r}$. Thus for some $j_1\in\{0, 2\}$, $z\in [f_1^{j_1r}(y_1)]_{E_r}$. Now let $y_2=f_1^{j_1r}f_3^{h_3}\cdots f_n^{h_n}(y_0)$. Then $f_1^{j_1r}(y_1)=f_2^{h_2}(y_2)$. By a similar argument, we get $j_2\in\{0,2\}$ such that $f_1^{j_1r}(y_1)\in [f_2^{j_2r}(y_2)]_{E_r}$. This implies that $z\in [f_2^{j_2r}(y_2)]_{E_r}$. Repeating this argument, we obtain $j_1, \dots, j_n\in\{0,2\}$ such that $z\in [f_1^{j_1r}f_2^{j_2r}\cdots f_n^{j_nr}(y_0)]_{E_r}$. The claim is proved. 

If $f_1^{-2r}\cdots f_n^{-2r}
\bigl(f_1^r\cdots f_n^r(x)\bigr)
=\varnothing$, then some of the points used in the preceding construction do not
exist. The corresponding $E_r$-classes are therefore absent, so
the number of $E_r$-classes meeting $B_\rho(x,r)$ can only
decrease. Hence it is still at most $2^n$.

Thus, for every $r>0$, we have constructed a smooth Borel equivalence relation $E_r$ whose classes have uniformly bounded diameter and such that every ball of radius $r$ meets at most $2^n$ $E_r$-classes. Hence
$$
\operatorname{asdim}^*_B(F(X),\rho)\le 2^n-1<\infty.
$$
The hyperfiniteness of $E_{f_1,\cdots,f_n}\upharpoonright F(X)$ now follows from Lemma~\ref{lem:hyp}.
\end{proof}

\begin{corollary}[{\cite[Theorem 1.6]{NaryshkinShinkoWeilacherYuCommutingFunctions}}] Let $X$ be a standard Borel space,  let $f_1, \dots, f_n\colon X\to X$ be commuting Borel functions each of which is bounded-to-one, and let $\rho$ be the graph metric on $G_{f_1,\dots, f_n}\upharpoonright F(X)$. Then $(F(X), \rho)$ has finite Borel asymptotic dimension. In particular, $E_{f_1,\dots, f_n}\upharpoonright F(X)$ is hyperfinite.
\end{corollary}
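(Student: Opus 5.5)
The corollary follows by chaining two earlier results. Theorem~\ref{thm:Weilacher} supplies the hypothesis of Theorem~\ref{thm:asdim}, and Theorem~\ref{thm:asdim} then gives the conclusion. Concretely, let $f_1,\dots,f_n$ be bounded-to-one commuting Borel functions. Theorem~\ref{thm:Weilacher} gives, for every $r\in\mathbb N^+$, a Borel $r$-forward-independent hitting set for $G_{f_1,\dots,f_n}\upharpoonright F(X)$ with syndeticity $2n^2r$. So the hypothesis of Theorem~\ref{thm:asdim} holds with the constant $C=2n^2$, which depends only on $n$ and not on $r$. Every bounded-to-one function is countable-to-one, so the standing assumption of Theorem~\ref{thm:asdim} is also met.

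Applying Theorem~\ref{thm:asdim}, we get that $(F(X),\rho)$ has finite strong Borel asymptotic dimension, in fact at most $2^n-1$. Strong Borel asymptotic dimension at most $m$ implies Borel asymptotic dimension at most $m$, since a smooth equivalence relation is in particular a Borel equivalence relation. This gives $\operatorname{asdim}_B(F(X),\rho)<\infty$. Alternatively, since the $f_i$ are bounded-to-one, $G_{f_1,\dots,f_n}\upharpoonright F(X)$ is locally finite, so $\rho$ is proper and the two notions coincide, as remarked after Definition~\ref{def:fBasd}.

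Hyperfiniteness of $E_{f_1,\dots,f_n}\upharpoonright F(X)$ is part of the conclusion of Theorem~\ref{thm:asdim}; equivalently, it follows from Lemma~\ref{lem:hyp} applied to the locally countable graph $G_{f_1,\dots,f_n}\upharpoonright F(X)$. There is no real obstacle here. The only point to check is that the constant $C=2n^2$ is uniform in $r$, which is explicit in Theorem~\ref{thm:Weilacher}.
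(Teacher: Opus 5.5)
Your proposal is correct and follows the paper's proof. Theorem~\ref{thm:Weilacher} gives hitting sets with syndeticity $2n^2r$, Theorem~\ref{thm:asdim} then gives finite strong, hence finite, Borel asymptotic dimension, and hyperfiniteness follows from Lemma~\ref{lem:hyp}; the paper's written proof cites Theorem~\ref{thm:singlefunction} next to Theorem~\ref{thm:Weilacher}, which looks like a slip for Theorem~\ref{thm:asdim}, so your chain is the intended argument.
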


\begin{proof} Since $G_{f_1,\dots, f_n}\upharpoonright F(X)$ is locally finite, the notions $\operatorname{asdim}_B(F(X), \rho)$ and $\operatorname{asdim}_B^*(F(X), \rho)$ are the same. It follows from Theorems~\ref{thm:singlefunction} and \ref{thm:Weilacher} that $(F(X), \rho)$ has finite Borel aymptotic dimension, which implies that  $E_{f_1,\dots, f_n}\upharpoonright F(X)$ is hyperfinite, either by \cite[Theorem 7.1]{ConleyJacksonMarksSewardTuckerDrob2023} or by Lemma~\ref{lem:hyp}.
\end{proof}

We also have the following corollary strengthening Theorem~\ref{thm:singlefunction} and Proposition~\ref{prop:42}.

\begin{corollary} Let $X$ be a standard Borel space, and let $f:X\to X$ be a
countable-to-one Borel function. Then the following are equivalent:
\begin{enumerate}[label=(\roman*),leftmargin=2.7em]
\item $G_f\upharpoonright F(X)$ has finite Borel asymptotic dimension.
\item $G_f\upharpoonright F(X)$ has finite strong Borel asymptotic dimension. 
\item For every $r\in\mathbb N^+$, there exists a Borel $r$-forward-independent hitting set. 
\item For every $r\in\mathbb N^+$, there is a Borel $r$-forward-independent hitting set with syndeticity $4r$.
\end{enumerate}
\end{corollary}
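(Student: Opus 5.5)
We prove the cycle of implications (ii)$\Rightarrow$(i)$\Rightarrow$(iii)$\Rightarrow$(iv)$\Rightarrow$(ii). Each arrow except the last is either immediate or already established; the last is a special case of Theorem~\ref{thm:asdim}.

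First, (ii)$\Rightarrow$(i) is the remark following Definition~\ref{def:fBasd}. A smooth $E_r$ witnessing strong Borel asymptotic dimension at most $m$ also witnesses Borel asymptotic dimension at most $m$.

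Second, (i)$\Rightarrow$(iii) is the forward direction of Theorem~\ref{thm:singlefunction}, the result of Grebík--Higgins.

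Third, (iii)$\Rightarrow$(iv) is exactly what the proof of Proposition~\ref{prop:42} establishes. There we start from an arbitrary Borel $r$-forward-independent hitting set $H_0$. We add points $f^{k-2r}(x)$ at the appropriate places to obtain $H=\bigcup_m H_m$, and $H$ is still $r$-forward-independent and now has syndeticity $4r$. I would cite that argument directly; no appeal to asymptotic dimension is needed for this step.

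Finally, for (iv)$\Rightarrow$(ii), apply Theorem~\ref{thm:asdim} with $n=1$, $f_1=f$ and $C=4$. Its hypothesis asks for, for every $r\in\mathbb N^+$, a Borel $r$-forward-independent hitting set for $G_f\upharpoonright F(X)$ with syndeticity $Cr$, with $C$ independent of $r$; this is precisely (iv). The conclusion is that $(F(X),\rho)$ has finite strong Borel asymptotic dimension, in fact at most $2^1-1=1$. I need to check that the definitions agree when $n=1$. The free part $F(X)$ for a single function is the set of points with no eventually periodic behaviour, matching the one-function notion used by Grebík--Higgins. The definitions of hitting set and forward-independence specialize correctly. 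Theorem~\ref{thm:asdim} only requires $f$ to be countable-to-one, which is assumed here.

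The only potential obstacle is bookkeeping rather than mathematics. The hypotheses of Theorem~\ref{thm:asdim} (via Theorem~\ref{thm:markers}) are stated with a uniform constant $C$, and we must be sure that $C=4$ is admissible for every $r$; it is, since (iv) fixes syndeticity $4r$ for all $r$.

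With all four implications in place, the equivalence follows. As a byproduct, (ii) together with Lemma~\ref{lem:hyp} also recovers hyperfiniteness of $E_f\upharpoonright F(X)$ under any of the conditions.
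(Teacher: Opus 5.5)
Your cycle (ii)$\Rightarrow$(i)$\Rightarrow$(iii)$\Rightarrow$(iv)$\Rightarrow$(ii) is the paper's route. The paper records (ii)$\Rightarrow$(i) as trivial and gets (iv)$\Rightarrow$(ii) from Theorem~\ref{thm:asdim} with $n=1$, $C=4$. The rest comes from Theorem~\ref{thm:singlefunction} and the reduction inside Proposition~\ref{prop:42}. Your steps (ii)$\Rightarrow$(i), (i)$\Rightarrow$(iii) and (iv)$\Rightarrow$(ii) are fine as stated.

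\textbf{The gap is in (iii)$\Rightarrow$(iv).} You import the construction from Proposition~\ref{prop:42} and assert that $H=\bigcup_m H_m$ ``is still $r$-forward-independent''. That is false in general.
\begin{itemize}
\item The argument there only shows that a point of $H_m\setminus H_{m-1}$ with $m\ge 1$ has no $H$-point among its next $2r-1$ forward iterates.
\item Points of $H_0$ are not covered. When $f$ is not injective, a branch merging just after such a point can deposit a new point there.
\end{itemize}
Concretely, take $X=\mathbb Z\cup\{b_0,\dots,b_{2r}\}$ with $f(n)=n+1$, $f(b_0)=1$, $f(b_{i+1})=b_i$, and $H_0=(2r+1)\mathbb Z$. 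This $H_0$ is an $r$-forward-independent hitting set on $F(X)=X$. For $w=b_{2r}$, the least $k$ with $f^k(w)\in H_0$ is $k=4r+1\ge 4r$. So $f^{k-2r}(w)=f^{2r+1}(w)=1$ enters $H_1$, while $0\in H_0$ and $f(0)=1$. The paper's proof of the corollary rests on the same construction, so this gap is shared, but it is genuine.

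\textbf{A direct fix.} The implication is nonetheless true, and there is a short argument you can substitute.
\begin{itemize}
\item Let $k(x)=\min\{k: f^k(x)\in H_0\}$. Along any forward orbit, $k$ drops by $1$ per step, except that at $h\in H_0$ it jumps from $0$ to $D(h)-1$. Here $D(h)=1+k(f(h))\ge r+1$ by forward-independence and freeness.
\item Let $g(h)=D(h)-2r\lfloor (D(h)-1)/(2r)\rfloor\in[1,2r]$. This is the distance from $h$ to the next point on its orbit with $k\equiv 0\pmod{2r}$.
\item Set
\[ H=\{x\in F(X): k(x)\equiv 0 \pmod{2r}\}\setminus\{h\in H_0: g(h)\le r\}, \]
which is clearly Borel.
\end{itemize}
Consecutive points of $\{k\equiv 0\}$ on an orbit are $2r$ apart within a run, and $g(h)$ apart just after $h\in H_0$. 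Deleted points are never consecutive:
\begin{itemize}
\item A deleted $h$ has $D(h)>2r$, so its successor has $k\ge 2r$, is not in $H_0$, and is kept.
\item Its predecessor either has $k=2r$, or is some $h'\in H_0$ with $D(h')\le 2r$. In the latter case $g(h')=D(h')>r$, so $h'$ is also kept.
\end{itemize}
Hence consecutive points of $H$ on every forward orbit are between $r+1$ and $3r$ apart, and every point reaches $H$ within $3r-1$ steps. This gives a Borel $r$-forward-independent hitting set with syndeticity $3r\le 4r$. The same argument repairs the proof of Proposition~\ref{prop:42}.
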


\begin{proof} By definition, (ii)$\Rightarrow$(i) holds. Theorem~\ref{thm:asdim} shows (iv)$\Rightarrow$(ii). 
\end{proof}

\begin{corollary}
\label{coro:color}
   Let $X$ be a standard Borel space. For $n\in\mathbb{N}$, suppose that $f_1,\cdots,f_n:X\to X$ are finite-to-one commuting Borel functions. Assume that for any $r\in\mathbb N^+$, there is a Borel $r$-forward-independent hitting set with syndeticity $Cr$ for $G_{f_1,\ldots,f_n}\upharpoonright F(X)$, where $C\in\mathbb N^+$ is independent of $r$. Then $\chi_B(G_{f_1,\cdots,f_n}\upharpoonright F(X))\leq 3$.
\end{corollary}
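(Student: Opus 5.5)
The natural route is to combine Lemma~\ref{lem:color} with Theorem~\ref{thm:asdim}. There are three steps.

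\emph{Local finiteness.} Since each $f_i$ is finite-to-one, every vertex $x$ of $G_{f_1,\dots,f_n}\upharpoonright F(X)$ has finite degree. Its neighbours are the at most $n$ points $f_i(x)$ together with the finite sets $f_i^{-1}(x)$. So the graph is locally finite, and its graph metric $\rho$ is proper.

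\emph{Finite asymptotic dimension.} By Theorem~\ref{thm:asdim}, which uses exactly the hypothesis on $r$-forward-independent hitting sets with syndeticity $Cr$, we get $\operatorname{asdim}^*_B(F(X),\rho)\le 2^n-1$. Since strong Borel asymptotic dimension implies Borel asymptotic dimension, $\operatorname{asdim}_B(F(X),\rho)<\infty$. Lemma~\ref{lem:color} then gives
\[
\chi_B(G_{f_1,\dots,f_n}\upharpoonright F(X))\le 2\chi(G_{f_1,\dots,f_n}\upharpoonright F(X))-1 .
\]

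\emph{Classical chromatic number.} It remains to show $\chi(G_{f_1,\dots,f_n}\upharpoonright F(X))\le 2$, i.e.\ that the graph is bipartite. This is the only step requiring an argument. I would use the parity function. Fix a connected component and a base point $x_0$. Any $y$ in the component satisfies $f^{\alpha}(y)=f^{\beta}(x_0)$ for some multi-indices $\alpha,\beta\in\mathbb N^n$, writing $f^\alpha=f_1^{\alpha_1}\cdots f_n^{\alpha_n}$. Define
\[
h(y)=\sum_i(\beta_i-\alpha_i)\in\mathbb Z .
\]
On the free part this is well defined, exactly as in the well-definedness of $\rho_{f_i}$ in Definition~\ref{def:fi-pseudo-dist}: each $\beta_i-\alpha_i$ is determined by $y$ and $x_0$. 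Along an edge $y\mapsto f_j(y)$ the value of $h$ changes by exactly $1$. Hence every closed walk has even length, and $y\mapsto h(y)\bmod 2$ is a proper $2$-coloring. This requires $F(X)$ to contain no fixed points or short cycles, which freeness guarantees, since $f_j(y)\neq y$ on $F(X)$.

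Note that this $2$-coloring need not be Borel, which is why the Borel bound passes through Lemma~\ref{lem:color}. If the graph is edgeless, the bound $\le 3$ is trivial. Combining the three steps gives $\chi_B\le 2\cdot 2-1=3$.
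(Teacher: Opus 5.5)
Your proof is correct and follows the paper's argument: local finiteness, Theorem~\ref{thm:asdim} for finite (strong, hence ordinary) Borel asymptotic dimension, bipartiteness, and then Lemma~\ref{lem:color} to get $\chi_B\le 2\cdot 2-1=3$. The only difference is the bipartiteness step, where your height function $h$ (well defined because $x_0$ lies in the free part) is cleaner than the paper's argument. It also supplies the justification the paper omits for its assertion that $|\rho(x_0,x_k)-\rho(x_0,x_{k+1})|=1$ along a cycle, since $\rho(x_0,y)\equiv h(y)\pmod 2$.
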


\begin{proof}
Since each $f_i$ is finite-to-one and there are only finitely many generators,
the graph
$G_{f_1,\ldots,f_n}\upharpoonright F(X)$
is locally finite.

We first observe that this graph has no odd cycles. Indeed, suppose that
$x_0,x_1,\ldots,x_m=x_0$
is a cycle. Let $\rho$ be the graph metric on $G_{f_1,\cdots,f_n}\upharpoonright F(X)$, we can state that $|\rho(x_0,x_k)-\rho(x_0,x_{k+1})|=1$ for any $0\le k\le m$. Thus, the number of $x_j$ such that $\rho(x_0,x_{j-1})<\rho(x_0,x_j)$ is equal to the number of $x_l$ such that $\rho(x_0,x_{l-1})>\rho(x_0,x_l)$, implying $m$ is even.
Therefore, the graph has no odd cycles, and so
$\chi\bigl(G_{f_1,\ldots,f_n}\upharpoonright F(X)\bigr)\le 2$.

By Theorem~\ref{thm:asdim},
$\operatorname{asdim}^*_B(F(X),\rho)<\infty$, which implies that $\operatorname{asdim}_B(F(X), \rho)<\infty$.
Since the graph is locally finite, Lemma~\ref{lem:color} gives
$$
\chi_B\bigl(G_{f_1,\ldots,f_n}\upharpoonright F(X)\bigr)
\le
2\chi\bigl(G_{f_1,\ldots,f_n}\upharpoonright F(X)\bigr)-1
\le 3.
$$
\end{proof}

%---------------------
\subsection{Borel perfect matching}
%In~\cite{GaoJacksonKrohneSeward2024}, Gao, Jackson, Krohne, and Seward proved the existence of Borel perfect matchings for Schreier graphs arising from free Borel actions of $\mathbb Z^n$. Their proof uses orthogonal marker regions. In the present setting, the generators need not be invertible, so such orthogonal structures are not directly available. We instead use the marker decompositions developed above and perform local modifications near the boundaries of the marker regions.
In this subsection, we first consider the case of two commuting Borel functions, where one is injective and the other one is bounded-to-one. In the following, we prove again the existence of $r$-forward-independent hitting set with a slightly improved syndeticity.

\begin{proposition}
\label{prop:specialcase}
Let $X$ be a standard Borel space, and let
$f_1,f_2:X\to X$ be commuting Borel functions such that $f_1$ is injective
and $f_2$ is bounded-to-one. Then, for every $r\in\mathbb N^+$, there is a
Borel $r$-forward-independent hitting set with syndeticity $6r$ for
$G_{f_1,f_2}\upharpoonright F(X)$.
\end{proposition}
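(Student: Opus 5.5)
We will prove the proposition with the same LOCAL-to-Borel reduction used for Theorem~\ref{thm:Weilacher}, but with a sharper combinatorial choice of the hitting set on $\mathbb Z^2$. Fix $r\in\mathbb N^+$. Take $F=([0,6r]\cap\mathbb N)^2$ and let $\Pi$ be the LCL on $\mathsf M=\mathbb N^2$ encoding two requirements: $r$-forward-independence, as in condition (ii) before Theorem~\ref{thm:3.2}, and hitting with syndeticity $6r$, where some labelled point lies at $(\gamma_1,\gamma_2)$ with $\gamma_1+\gamma_2\le 6r$. Borel $\Pi$-labelings of $F(X)$ are then exactly the desired sets. The action of $\mathbb N^2$ given by $f_1,f_2$ on $F(X)$ is free and bounded-to-one, since $f_1$ is injective and $f_2$ is bounded-to-one. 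So Theorem~\ref{thm:3.3} together with the equivalence (1)$\Leftrightarrow$(4) of Theorem~\ref{thm:3.2} reduces the problem to the following analogue of Proposition~\ref{prop:3.4}. For a free continuous $\mathbb Z^2$-action on a zero-dimensional Polish space $Z$, we must find a clopen $r$-forward-independent hitting set $H$ with syndeticity $6r$ with respect to $f_i(z)=e_i\cdot z$.

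The improvement over the constant $2n^2r=8r$ comes from using a rectangular, $\ell^\infty$-type separation rather than the graph metric. Forward-independence fails for distinct $x,y\in H$ only when $y=(\gamma_1,\gamma_2)\cdot x$ with $|\gamma_1|\le r$ and $|\gamma_2|\le r$. Indeed, taking $\alpha_i=\max(-\gamma_i,0)$ and $\beta_i=\max(\gamma_i,0)$ realizes any such difference, and conversely each $|\alpha_i-\beta_i|\le r$. Hence it suffices that $H$ be maximal independent in the graph $G_\infty$ on $Z$ where $z\sim w$ if and only if $w=\gamma\cdot z$ with $0<\|\gamma\|_\infty\le r$.

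$G_\infty$ is a bounded-degree Schreier-type graph, so Linial's $O(\log^*n)$ algorithm and Theorem~\ref{thm:3.2}, exactly as in Lemma~\ref{lem:3.5}, give a clopen maximal independent set $H$ of $G_\infty$. Independence gives $r$-forward-independence immediately. For hitting, given $z$, put $u=(r,r)\cdot z$. Maximality yields $y\in H$ with $y=\gamma\cdot u$ and $\|\gamma\|_\infty\le r$. Then $y=(r+\gamma_1,r+\gamma_2)\cdot z$ with both coordinates in $[0,2r]$, so the total is at most $4r\le 6r$.

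This already appears to give syndeticity $4r$, which is within the stated $6r$. The authors' bound of $6r$ suggests they use a different, perhaps more explicit, construction exploiting injectivity of $f_1$, for example reducing to a one-function problem along $f_1$-orbits in the spirit of Proposition~\ref{prop:42}. Either route suffices.

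The main obstacle is the bookkeeping in the reduction, since Theorem~\ref{thm:3.2} is stated for LCLs on $\mathsf M$ while the witnesses live on $\mathbb Z^2$. The LCL must be formulated purely with forward offsets in $F$, checking independence among labelled points inside the window $F$ based at each point. We have to verify that a labeling valid at every point of $Z$ is globally $r$-forward-independent. This holds because any two conflicting points have a common forward image within distance $r$ in each coordinate, and the window based at their coordinatewise-minimal predecessor sees both.
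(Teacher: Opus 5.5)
\paragraph{A gap in the transfer to $F(X)$.} Your $\mathbb Z^2$ construction is fine. On $Z$, an $\ell^\infty$-maximal independent set is exactly an $r$-forward-independent set, and the shift $u=(r,r)\cdot z$ gives syndeticity $4r$. The gap is in the transfer back to $F(X)$. Theorem~\ref{thm:3.3} gives you a Borel $\Pi$-labeling of $F(X)$. So what you must show is that a $\Pi$-labeling \emph{of $F(X)$} is globally $r$-forward-independent; on $Z$ only the trivial converse is needed. On $F(X)$ the coordinatewise-minimal predecessor you invoke need not exist. $\Pi$ only compares points lying in a common window $\{\gamma\cdot w:\gamma\in F\}$, i.e.\ points with a common $\mathbb N^2$-predecessor. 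If $f_1^{a}(x)=f_2^{b}(y)$ with $0<a,b\le r$, such a predecessor exists iff $y\in f_1^{a}(X)$, and this fails when $f_1$ is not surjective.

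Concretely, let $P=\{(i,j)\in\mathbb Z^2: i\ge0\text{ or }j\ge0\}$ with $f_1,f_2$ the unit translations (commuting, injective, free). Then $x=(-1,0)$ and $y=(0,-1)$ satisfy $f_1(x)=f_2(y)$ but have no common predecessor in $P$, so $\Pi$ never compares them. For $r=1$, intersect $x+3\mathbb Z^2$ with $\{i<0\le j\}$, $y+3\mathbb Z^2$ with $\{j<0\le i\}$, and $(1,1)+3\mathbb Z^2$ with $\{i,j\ge0\}$. The union is a $\Pi$-labeling containing both $x$ and $y$, so it is not $1$-forward-independent. The identification stated just before Theorem~\ref{thm:3.2} has the same defect, so it cannot simply be quoted. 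Separately, a statement about $\mathbb Z^2$-actions feeds into item (3) of Theorem~\ref{thm:3.2} via Bernshteyn's theorem, not into item (1), which quantifies over all free $\mathbb N^2$-actions.

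\paragraph{A repair, and comparison with the paper.} One repair is to enlarge the alphabet:
\begin{itemize}
\item also label each $p$ by a set $S(p)\subseteq([0,r]\cap\mathbb N)^2$;
\item require $\alpha\in S(f_1^{\alpha_1}f_2^{\alpha_2}(w))$ for every $w\in H$ and every $\alpha\in([0,r]\cap\mathbb N)^2$;
\item forbid any $S(p)$ from containing two distinct offsets with disjoint supports.
\end{itemize}
This is still an LCL on $\mathbb N^2$. A conflict $f_1^{\alpha_1}f_2^{\alpha_2}(x)=f_1^{\beta_1}f_2^{\beta_2}(y)=p$ with distinct $x,y\in H$ would put such a pair $\alpha\neq\beta$ into $S(p)$, so any solution is genuinely $r$-forward-independent. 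On $Z$ the honest labels $S(p)=\{\alpha\in([0,r]\cap\mathbb N)^2:(-\alpha)\cdot p\in H\}$ are clopen.

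So repaired, your route is genuinely different from the paper's, which never leaves $F(X)$. The paper uses injectivity of $f_1$ to form the bounded-degree directed graph with $x\,\vec R\,y$ iff $x\neq y$ and $f_1^{a}f_2^{b}(x)=y$ for some $a\in[-r,r]\cap\mathbb Z$, $b\in[0,r]\cap\mathbb Z$. Cancelling $f_1$-powers shows that every forward conflict is an $\vec R$-edge. It then takes a Borel quasi-kernel (Lemma~\ref{lem:qk}) and follows an $\vec R$-path of length at most $2$ from $f_1^{2r}(x)$ into $H$. The initial shift absorbs negative $f_1$-exponents, giving $2r+2r+2r=6r$. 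Because conflicts are edges of $\vec R$ directly, missing predecessors never matter there.

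Your approach needs the machinery of Theorems~\ref{thm:3.2} and~\ref{thm:3.3}, with Bernshteyn and Linial, plus the bookkeeping above. In return it does not use injectivity of $f_1$ and gives $4r$. The same $\ell^\infty$ trick with $u=(r,\ldots,r)\cdot z$ would sharpen Theorem~\ref{thm:Weilacher} from $2n^2r$ to $2nr$.
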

\begin{proof}
Fix $r\in\mathbb N^+$. 
 Since $f_1$ is injective and $f_2$ is
bounded-to-one, define a directed graph $\vec G=(F(X),\vec R)$ by
$$
x\,\vec R\,y
\quad\Longleftrightarrow\quad
x\neq y
\ \text{ and }\
\exists a\in[-r,r]\cap \mathbb{Z},\ \exists b\in[0,r]\cap \mathbb{Z}\ 
\bigl(f_1^a f_2^b(x)=y\bigr).
$$
Then $\vec G$ has bounded degree and hence finite Borel chromatic
number. Applying Lemma~\ref{lem:qk},
we obtain a Borel quasi-kernel
$H\subseteq F(X)$
for $\vec G$. We claim that $H$ is the desired set.

First, $H$ is $r$-forward-independent. Indeed, suppose that distinct
$x,y\in H$ satisfy
$f_1^{\alpha_1}f_2^{\alpha_2}(x)
=
f_1^{\beta_1} f_2^{\beta_2}(y)$,
where
$\alpha_i, \beta_i \in [0,r]\cap \mathbb{N}$ and
$\alpha_i\beta_i=0$ for $i\in \{1,2\}$.
Since $f_1$ is injective, $f_1$-powers can be cancelled, and we obtain either
$f_1^a f_2^b(x)=y$ or
$f_1^a f_2^b(y)=x$
for some $a\in[-r,r]\cap \mathbb{Z}$ and $b\in[0,r]\cap \mathbb{Z}$. Hence $x\,\vec R\,y$ or
$y\,\vec R\,x$, contradicting the $\vec R$-independence of $H$.
Therefore, $H$ is $r$-forward-independent.

It remains to prove that $H$ has syndeticity $6r$. Fix
$x\in F(X)$ and consider $f_1^{2r}(x)$. If
$f_1^{2r}(x)\in H$, then the desired conclusion follows
immediately, since $H$ is reached from $x$ in $2r$ forward steps. If $f_1^{2r}(x)\notin H$, then there are $y,z\in F(X)$ with $f_1^{2r}(x)\vec R y\vec R z$, such that we have either (a) $y\in H$, or (b) 
$z\in H$,
since $H$ is a quasi-kernel of $\vec G$, 

If (a) holds, i.e., $y\in H$. We can write $y=f_1^{2r+j_1}f_2^{j_2}(x)$ for some $j_1\in [-r,r]\cap \mathbb{Z}$ and $j_2\in [0,r]\cap \mathbb{Z}$, then from $x$ we reach $y\in H$ in at most
$2r+j_1+j_2\le 4r\le 6r$
forward steps. Suppose (b) holds, i.e., $x\vec R\,y\vec R\,z$ for some $z\in H$. Then for some
$k_1\in[-r,r]\cap \mathbb{Z}$ and $k_2\in[0,r]\cap \mathbb{Z}$,
$z=f_1^{k_1} f_2^{k_2}(y)$.
Hence
$z=f_1^{2r+j_1+k_1} f_2^{j_2+k_2}(x)$, and the total
number of forward steps is at most
$(2r+j_1+k_1)+(j_2+k_2)
\le (2r+r+r)+(r+r)
=6r$. 
\end{proof}

We give a simple example of two commuting Borel functions where one of them is an injection and the other one is exactly 2-to-1.

\begin{example}
Let $X=2^\mathbb{Z}\times2^\mathbb{N}$. For $x\in X$, define
$$
f_1(x,y)=(\sigma_{\mathbb{Z}}(x),y),
\qquad
f_2(x,y)=(x,\sigma_{\mathbb{N}}(y)),
$$
where $\sigma_\mathbb{Z}$ is on $2^\mathbb{Z}$ with  $\sigma_{\mathbb{Z}}(x)(n)=x(n+1)$ and $\sigma_\mathbb{N}$ is on $2^\mathbb{N}$ with $\sigma_{\mathbb{N}}(y)(n)=y(n+1)$.
Then $f_1$ and $f_2$ are Borel functions, $f_1$ is bijective, $f_2$ is exactly 2-to-1, and
$f_1\circ f_2=f_2\circ f_1$.
\end{example}

We introduce some additional notation to be used below. For $1\le i\le n$ and distinct points $x,y\in X$, we say that
$y$ is an \emph{$f_i$-sibling} of $x$ if
$f_i(y)=f_i(x)$. When $f_i$ is exactly $2$-to-$1$, every point
$x\in X$ has a unique $f_i$-sibling. 
% The following theorem shows the existence of perfect matchings for $G_{f_1,f_2}$ under an additional assumption. 

By \cite[Proposition~2.8]
{ConleyMiller2017MeasurablePerfectMatchings}, if $f$ is a countable-to-one Borel
surjection, then $G_f$ has a Borel perfect matching outside the
injective part of $f$. In particular,
$G_f$ has a Borel perfect matching when $f$ is an even-to-one
Borel surjection. Without surjectivity, this conclusion is false
in general. The next theorem shows that surjectivity is no longer
needed after adding a commuting injective map and restricting to
the free part.

\begin{theorem}
\label{thm:perfect-matching}
Let $X$ be a standard Borel space. Suppose that $f_1,f_2:X\to X$ are commuting Borel functions such that $f_1$ is injective and $f_2$ is bounded-to-one and exactly even-to-one.
Then there exists a Borel perfect matching for $G_{f_1,f_2}\upharpoonright F(X)$.
\end{theorem}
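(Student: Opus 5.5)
The plan is to combine the marker machinery of Section~4 with the even fibers of $f_2$. First, by Proposition~\ref{prop:specialcase}, for every $r$ there is a Borel $r$-forward-independent hitting set with syndeticity $6r$ for $G_{f_1,f_2}\upharpoonright F(X)$. So Theorem~\ref{thm:rooted-markers} applies with $n=2$ and $C=6$. For a suitable $d$ (say $d\ge 2$), it yields a smooth Borel subequivalence relation $R_d$ whose classes are rooted regions $A$ with $d_1(A),d_2(A)\in\{d,d+1\}$.

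Because $f_1$ is injective, each such $A$ is a finite union of $f_1$-segments. Inside $A$, the $f_2$-structure is a finite forest: the root sits at the top, and every non-top point $y$ with $f_2(y)\in A$ has all its $f_2$-siblings in $A$, since $A$ is a full backward cone truncated at depth $d_2(A)$. The exception is the points where backward extension stops because preimages do not exist. Since $f_2$ is exactly even-to-one, however, any point that has an $f_2$-preimage has an even nonzero number of them. Hence the only defects are full "missing fibers", never odd fibers.

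The local step is to match each region $A$ internally, using $f_2$-sibling edges (a sibling pair $y,y'$ with $f_2(y)=f_2(y')$ is matched through the common image only indirectly). The cleaner choice is to match vertically along $f_2$-edges from leaves upward, greedily. A finite rooted tree in which every internal vertex has an even number of children has a perfect matching of the non-root vertices, except that the root is left over. Concretely, in each fiber pair one child is matched to its parent and the remaining children are paired off. Siblings are not adjacent, so I would instead match children to parents and propagate parity. If the leftover count is odd, the parent is free, and the argument proceeds by induction on depth.

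Unmatched vertices then lie on the top $f_2$-face $F_2^+(A)$, that is, on the $f_1$-segment through the root. I would fix this leftover set using $f_1$-edges along that segment, whose length is $d$ or $d+1$. If the segment length is even, the leftovers are matched pairwise along $f_1$.

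The main obstacle is the parity of the leftover set. When the number of unmatched points in a region is odd, we must pass a single unmatched point across to a neighbouring region through an $f_2$-edge leaving $F_2^+(A)$, or through an $f_1$-edge at the end of the segment. This is a global Borel parity-correction problem, analogous to the $\mathbb Z^n$ argument of \cite{GaoJacksonKrohneSeward2024}. I would try to handle it by choosing the decomposition with extra slack, using Theorem~\ref{thm:markers} at two scales. A coarse rootless region $B$ is cut into rooted regions of side $d$ or $d+1$ in a way that makes each fine region's leftover count even; the choice between $d$ and $d+1$ lets us adjust parity along $f_1$. Any unavoidable odd defect is then pushed along an $f_1$-path to the forward $f_1$-face of $B$. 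Face separation (conditions $(i.3)$, $(n+i.3)$ and $(n+i.4)$) guarantees that such paths from distinct coarse regions do not interfere. There the defect is absorbed by an alternating-path augmentation within a bounded neighbourhood, and Borelness follows because all choices are made inside finite classes of smooth relations, using their Borel selectors.
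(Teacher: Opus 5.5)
There is a genuine gap, and it starts with your local step. The claim that a finite rooted tree in which every internal vertex has an even number of children has a matching covering all non-root vertices is false. A root with two leaf children already fails. In general, a vertex whose children are all leaves can be matched to at most one of them, since siblings are not adjacent. Take a rooted region $A$, say with $f_1$ bijective and $f_2$ exactly $2$-to-$1$ and surjective. Any matching of $A$ by $f_2$-edges leaves at least half of $F_2^-(A)$ unmatched: at least $2^{d_2(A)-1}$ points in each $f_2$-tree. These points lie on the backward $f_2$-face, not on $F_2^+(A)$.

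So the leftover is not one parity defect per region, and the rest of your plan rests on a false picture. In fact $f_2$-edges are the wrong primary matching edges. Each region is a union of $f_1$-segments, and if all of them had an even number of points you could match with $f_1$-edges alone. The real difficulty is $f_1$-parity, and your two-scale correction does not resolve it:
\begin{itemize}
\item choosing fine side lengths $d$ or $d+1$ cannot change the parity of the $f_1$-length of the coarse region $B$;
\item pushing a defect along $f_1$ to the forward $f_1$-face of $B$ only moves it into the next coarse region on the same $f_1$-line.
\end{itemize}
With $f_1$-edges alone this is the parity problem on the $\mathbb{Z}$-lines of $f_1$, which has no Borel solution in general. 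So the ``alternating-path augmentation within a bounded neighbourhood'' would have to use $f_2$-edges in some specific way. You give no construction of it, and that augmentation is exactly what the theorem requires.

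What is missing is a concrete mechanism by which $f_2$-edges repair $f_1$-parity. This is where the even-to-one hypothesis really enters; you use it only in the false tree claim. The paper proceeds as follows.
\begin{itemize}
\item It first alters the decomposition so that each class $A$ has $\rho_{f_2}(\operatorname{root}(A),A)=1$. This guarantees that every $x$ and its sibling $x'$ lie in the same class. Your rooted regions, by contrast, separate every point of $F_2^+(A)$ from all of its siblings.
\item It matches the $f_1$-segments through $x$ and $x'$ along $f_1$, out of phase with each other. Then unmatched points lie on the $f_1$-faces and property (ii) of $M$ holds.
\item It repairs the defects with a bounded gadget in $\mathcal A_m(x)$. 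There, $f_2$-edges into the segment through $f_2(u)$ re-phase the matching along the segments from $u$ to $x$ and from $u'$ to $x'$, while leaving the status of $f_2(x)$ unchanged.
\item The heights $m$ are staggered ($m>m'+10$, possible since $d_1\gg d_2$), so the gadgets for $x$ and $f_2(x)$ do not collide.
\item For general even-to-one $f_2$, a Borel sibling involution $x\mapsto x'$ replaces the unique sibling.
\end{itemize}

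Finally, you do not treat non-surjective $f_1$, where $f_1$-segments can be truncated arbitrarily. The paper handles separately the part of $F(X)$ whose $E_{f_1,f_2}$-classes contain a point with no $f_1$-preimage. There it matches along $f_1$-rays from their starting points and Borel-selects a base point on each bi-infinite $f_1$-line.
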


\begin{proof}
%We note that the non-surjective case can be reduced to the surjective case. Our construction will not depend on whether  $f_2^{-1}(x)\neq \varnothing$, and so we may assume $f_2^{-1}(x)\neq \varnothing$ and the proof will not change. If $f_1^{-1}(x)=\varnothing$ then we can simply match points in $\{f_2^j(x):j\ge 0\}$ in a trivial way since we have a point to start, and the points remain to be matched is like the first case. Thus, we assume that $f_1,f_2$ are surjective.

We first assume that $f_1, f_2$ are surjective and that $f_2$ is exactly $2$-to-$1$. By Theorem~\ref{thm:Weilacher} or Proposition~\ref{prop:specialcase}, this case satisfies the condition in Theorem~\ref{thm:markers}.
Fix an integer $d>0$, and choose $D\gg d$.
Let
$R'_D\subseteq E_{f_1,f_2}\upharpoonright F(X)$
be the Borel subequivalence relation given by Theorem~\ref{thm:markers} when $n=2$. Recall that every $R'_D$-class $A$ is a rootless region with $d_1(A),d_2(A)\ge D$. Then, as in the proof of Theorem~\ref{thm:rooted-markers}, we can cut the $R'_D$-classes into regions with $f_2$-side lengths $d$ or $d+1$, and denote the resulting Borel subequivalence relation by $R'_d$. 

We further define $R_d\subseteq R_d'$ as follows. For $x\in F_1^+([x]_{R_d'})\cap F_2^+([x]_{R_d'})$, let 
$$[x]_{R_d}=\left\{y\in [x]_{R_d'}:\exists j_1,j_2\in\mathbb{N}\ \mbox{ such that } f_1^{j_1}f_2^{j_2}(y)=f_2(x)\right\}.$$
Thus, each $R_d$-class is a rootless region with $\rho_{f_2}(\mathrm{root}([x]_{R_d}),[x]_{R_d})=1$.

For each $x\in F(X)$, let $x'$ denote the \emph{sibling} of $x$. 
We have
$x\;R_d\;x'\text{ for every }x\in F(X)$. 

Fix a Borel linear ordering $\prec$ on $F(X)$. We now define a Borel partial matching $M$ on $F(X)$ as follows.
Whenever
$x\prec x'$ and
$x'\in F_1^-([x]_{R_d})$,
we put
$$
\left\{f_1^{2k}(x),f_1^{2k+1}(x)\right\}\in M
\quad\text{and}\quad
\left\{f_1^{2k+1}(x'),f_1^{2k+2}(x')\right\}\in M
$$
for every $k$ such that $2\le 2k+2\le d_1([x]_{R_d})$.
By construction, $M$ is a Borel partial matching satisfying the following properties:
\begin{enumerate}[label=\roman*),leftmargin=2.7em]
\item If $x\notin\dom(M)$, then $x\in F_1([x]_{R_d})$.
\item If $x,f_1(x)\in\dom(M)$, then
$
\{x,f_1(x)\}\in M
\iff
\{x',f_1(x')\}\notin M.$

\item There is no $f_2$-edge in $M$.
\end{enumerate}

For any partial matching $N$ of $F(X)$ and $x\in F(X)$, define
$$
p_N(x)=
\begin{cases}
(1,1), & \text{if } x,f_1(x)\in \dom(N),\\
(1,0), & \text{if } x\in\dom(N),\ f_1(x)\notin\dom(N),\\
(0,1), & \text{if } x\notin\dom(N),\ f_1(x)\in\dom(N),\\
(0,0), & \text{if } x,f_1(x)\notin\dom(N).
\end{cases}
$$
Thus, $p_N(x)$ keeps track of the matched/unmatched statuses of $x$ and $f_1(x)$ with respect to $N$. 

Next we specify a procedure to modify $M$ into a perfect matching $N$. The procedure is performed near points $x$ with $p_M(x)=(1,0)$ or $(0,1)$. Thus, if $p_M(x)=(0,0)$ or $(1,1)$, then do nothing. 

Consider the case $p_M(x)=(1,0)$, in other words, $x$ is matched in $M$ and $f_1(x)$ is not. 
By the construction of $M$, this is equivalent to
$p_M(x')=(0,1)$. The left panel of Figure~\ref{Fig.8} illustrates the situation.
We are now going to modify the partial matching $M$ in a local area near $x$ and $x'$, so that in the resulting partial matching, which we denote as $M_x$, the matched/unmatched status of $f_2(x)$ is unchanged, but both $x$ and $x'$ are taken care of.  More precisely, we will have $$
p_{M_x}(x)=(0,0),\qquad
p_{M_x}(x')=(1,1),\qquad 
p_{M_x}(f_2(x))=p_M(f_2(x)).
$$
To specify the local area, choose $m\in [5,d_1([x]_{R_d})]\cap \mathbb{N}$,
and let $u\in [x]_{R_d}$ satisfy
$f_1^m(u)=x$.
Define
$$
\mathcal A_m(x)
   :=\{f_1^j(u),\,f_1^j(u')\colon 0\leq j\leq m\}\cup \{f_1^jf_2(u)\colon 0\le j\le 5\}.
$$
The exact modifications performed to $\{f_1^j(u), f_1^j(u'), f_1^jf_2(u)\colon 0\leq j\leq 5\}$ are illustrated in the right panel of Figure~\ref{Fig.8}. Then further modifications are extended along the $f_1$-segments connecting $u$ to $x$ and $u'$ to $x'$, respectively.

For the case $p_M(x)=(0,1)$, the modifications for $x'$ are performed to obtain $M_{x'}$ so that
$$
p_{M_{x'}}(x)=(1,1),\qquad
p_{M_{x'}}(x')=(0,0),\qquad
p_{M_{x'}}(f_2(x))=p_M(f_2(x)).
$$

\begin{figure}[H]
    \centering
    \resizebox{0.6\linewidth}{!}{\input{fig9.tex}}
    \caption{A local modification inside the area $\mathcal A_m(x)$.}
    \label{Fig.8}
\end{figure}

Now, to modify the partial matching $M$ to obtain a new partial matching $M'$ by using the local modification above, we set
the $m$-values for the local areas significantly differently for all pairs of $x$ and $f_2(x)$ in the same $R_d$-classes. 
More precisely, for $x\in F_1^+([x]_{R_d})\setminus F_2^+([x]_{R_d})$, we ensure $\mathcal{A}_m(x)\cap \mathcal{A}_{m'}(f_2(x))=\varnothing$ by setting $m>m'+10$.  This is possible because we have for all $x\in F(X)$, $d_1([x]_{R_d})\geq D\gg d+1\geq d_2([x]_{R_d})$. And for $x\in F_2^+([x]_{R_d})$, we just need to make sure that there are only $f_1$-edges matched in $\mathcal{A}_m(x)\cup \mathcal{A}_{m'}(f_2(x))$.

We now have that for any $x\in F(X)$, $p_{M'}(x)=(0,0)$ or $(1,1)$, and $p_{M'}(x)=(0,0)$ only if $x\in F_2^+([x]_{R_d})$. In a final round of modification, match up $x$ with $f_1(x)$ if $p_{M'}(x)=(0,0)$. The result is a perfect matching $N$ of $F(X)$. This finishes the proof assuming that $f_1, f_2$ are surjective and that $f_2$ is exactly $2$-to-$1$.

The proof is similar in the more general case that $f_2$ is bounded-to-one and exactly even-to-one. In this case, every point $x\in F(X)$ has an odd number of siblings, and it is possible to assign $x\mapsto x'$ so that $x'$ is a sibling of $x$, and $(x')'=x$. With this assignment, the proof works exactly as above. 

Next we consider the general cases in which $f_1$ or $f_2$ is not surjective. First, note that the above proof does not depend on whether $f_2^{-1}(x)=\varnothing$, and so the proof is exactly the same if $f_2$ is not surjective. 
Now suppose $f_1$ is not surjective. Let $A$ be the set of all $x\in F(X)$ such that there is $y\in [x]_{E_{f_1, f_2}}$ with $f_1^{-1}(y)=\varnothing$. Then $f_1$ is surjective as a function on $F(X)\setminus A$. It suffices to define a Borel perfect matching on $A$. Without loss of generality, assume $A=F(X)$, i.e., for any $x\in F(X)$ there is $y\in [x]_{E_{f_1,f_2}}$ with $f^{-1}(y)=\varnothing$. We define a perfect matching $M$ on $F(X)$ so that $M$ consists only of $f_1$-edges. 

For each $x\in F(X)$ let $L_x=E_{f_1}$. If there is $y\in L_x$ such that $f_1^{-1}(y)=\varnothing$, $L_x$ would be a ray with a starting point; otherwise it is bi-infinite. Now if $L_x$ is a ray with a starting point $z$, we put $\{f_1^{2k}(z), f_1^{2k+1}(z)\}\in M$ for all $k\in\mathbb{N}$. If $L_x$ is bi-infinite and if we are able to select a single element $z\in L_x$ in a Borel way, then we can put $\{f_1^{2k}(z), f_1^{2k+1}(z)\}\in M$ for all $k\in\mathbb{Z}$ and $M$ will be a perfect matching. Thus it suffices to specify a procedure to select a single point in $L_x$ in a Borel way. For this, consider a bi-infinite $L_x$. Consider the graph $G_L$ with the vertex set $V_L=\{L_y\colon y\in [x]_{E_{f_1,f_2}}\}$ and the edge relation $R_L$ defined by
$$ L_yR_LL_z\iff \exists u\in L_y\ \exists v\in L_z\ (f_2(u)=v \mbox{ or } f_2(v)=u). $$
Then $G_L$ is locally finite. Let $\rho_L$ be the graph metric of $G_L$. By our assumption, there is a ray $L_y$ with the starting point $y$ so that $\rho_{L}(L_y, L_x)$ is the least and so that if there is another ray $L_{y'}$ with the starting point $y'$ and with $\rho_{L}(L_y, L_x)=\rho_{L}(L_{y'}, L_x)$, then $f_1^{\alpha_1}f_2^{\alpha_2}(y)=f_2^{\beta_2}(y')$ for some $\alpha_1, \alpha_2, \beta_2\in\mathbb{N}$. Intuitively, the $f_1$-level of $y$ is no higher than the $f_1$-level of $y'$. Although such $L_y$ might not be unique, the $f_1$-level of its starting point is uniquely determined. Let $z\in L_x$ be the point whose $f_1$-level is the same as $y$, in other words, $\rho_{f_1}(z, y)=0$. We have thus selected a single point in $L_x$ in a Borel way.
\end{proof}

\begin{corollary} Let $X$ be a standard Borel space and $I\subseteq \mathbb{N}$ is a countable index set.
Suppose that $\{f_i\}_{i\in I}$ is a family of Borel functions on $X$ such that $f_1\circ f_2=f_2\circ f_1$, $f_1$ is injective and $f_2$ is bounded-to-one and exactly even-to-one. Then there exists a Borel perfect matching for $G_{\{f_i\}_{i\in I}}\upharpoonright F(X)$.
\end{corollary}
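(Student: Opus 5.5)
The plan is to reduce the corollary to Theorem~\ref{thm:perfect-matching}. The perfect matching produced there for $G_{f_1,f_2}\upharpoonright F(X)$ consists of edges of $G_{f_1,f_2}$, and every such edge is also an edge of $G_{\{f_i\}_{i\in I}}$. So it suffices to check two things: that the relevant vertex set is covered, and that the free part appearing in the corollary is compatible with the free part for the pair $f_1,f_2$.

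The first step is to fix the interpretation of $F(X)$ for an infinite family. The definition of the free part was given only for finitely many commuting functions, while the corollary assumes commutativity only of $f_1,f_2$. I will therefore read $F(X)$ in the corollary as the free part $F_{1,2}(X)$ of the monoid action generated by $f_1,f_2$. This is the only reading available from the hypotheses. Any reasonable alternative reading, such as a free part defined using all $f_i$ whenever that makes sense, gives a subset of $F_{1,2}(X)$ that is invariant under the relevant connectivity, and the argument below goes through for it as well.

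The second step is to verify that $F_{1,2}(X)$ is a Borel set and is invariant under $E_{f_1,f_2}$. If $x$ is free, then $f_1(x)$ and $f_2(x)$ are free: a collision at $f_1(x)$ would shift to a collision at $x$ by adding $1$ to the $f_1$-exponent on both sides. Conversely, if $y$ is a preimage of a free point, then $y$ is free, because a collision at $y$ pushes forward by commutativity to a collision at the image. Hence $F_{1,2}(X)$ is a union of $E_{f_1,f_2}$-classes. Applying Theorem~\ref{thm:perfect-matching} to the restrictions of $f_1,f_2$ to $F_{1,2}(X)$ then gives a Borel set $N$ of $f_1$- and $f_2$-edges that matches every point of $F_{1,2}(X)$ exactly once.

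The last step is to conclude. Regard $N$ as a subset of the edge set of $G_{\{f_i\}_{i\in I}}$, restricted to $F(X)$. Every vertex of $F(X)$ has degree exactly one in $(F(X),N)$, and $N$ is Borel. So $N$ is a Borel perfect matching of $G_{\{f_i\}_{i\in I}}\upharpoonright F(X)$. Note that a perfect matching of a spanning subgraph is automatically a perfect matching of the larger graph, so the extra edges coming from $f_i$ with $i\ge 3$ cause no difficulty.

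The main obstacle I expect is purely definitional: ensuring that $F(X)$ is read as a subset of the $\{f_1,f_2\}$-free part on which all $E_{f_1,f_2}$-classes are complete. This is needed so that Theorem~\ref{thm:perfect-matching} applies without leaving stray vertices whose $f_1,f_2$-neighbours lie outside the domain. The invariance check in the second step is what handles this.
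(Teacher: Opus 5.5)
Your proposal is correct and is essentially the paper's argument: the paper gives no separate proof because the corollary is immediate from Theorem~\ref{thm:perfect-matching}, since a Borel perfect matching of the spanning subgraph $G_{f_1,f_2}\upharpoonright F(X)$ is automatically a Borel perfect matching of $G_{\{f_i\}_{i\in I}}\upharpoonright F(X)$. Your reading of $F(X)$ as the $\{f_1,f_2\}$-free part and your check of its $E_{f_1,f_2}$-invariance are harmless but not actually needed, since Theorem~\ref{thm:perfect-matching} can be applied to $f_1,f_2$ on $X$ directly without restricting.
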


%----------------
\subsection{Borel edge coloring}
%Borel edge colorings are closely related to Borel perfect matchings. For a $k$-regular graph, if its edge chromatic number is equal to the degree, then it has a Borel perfect matching. Thus, for an injective function $f_1 $ and  an exactly 2-to-1 function $f_2$, we want to have a Borel $5$-edge coloring for $G_{f_1,f_2}$. 

In this final subsection we consider Borel edge colorings. First, we consider again the case of two commuting functions where one of them is injective and the other one is exactly $2$-to-$1$. In this case the graph has maximum degree $5$,  hence its
edge chromatic number is at least $5$. We do not know whether
$G_{f_1,f_2}\upharpoonright F(X)$ has Borel edge chromatic
number $5$, but we prove
the following Borel $6$-edge-coloring result. 
The proof of the following theorem is inspired by \cite{GaoWangWang2025ContinuousEdge}.

\begin{theorem}
\label{thm:color}
Let $X$ be a standard Borel space. Suppose that $f_1,f_2:X\to X$ are commuting Borel functions such that $f_1$ is injective and $f_2$ is exactly 2-to-1.
Then $\chi'_B(G_{f_1,f_2}\upharpoonright F(X))\leq 6$.
\end{theorem}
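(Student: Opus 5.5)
The plan is to reduce to a Borel perfect matching plus a Borel edge coloring of the remainder. The graph $G=G_{f_1,f_2}\upharpoonright F(X)$ has maximum degree $5$: each vertex has two $f_1$-edges (to $f_1(x)$ and to $f_1^{-1}(x)$ if it exists) and three $f_2$-edges (to $f_2(x)$ and to the two $f_2$-preimages).

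\textbf{Step 1: split off $f_2$-edges.} The $f_2$-edges alone form the graph $G_{f_2}\upharpoonright F(X)$, which has maximum degree $3$. Along each $f_2$-orbit it is a $3$-regular tree when $f_2$ is surjective, and a subtree of one otherwise.

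\textbf{Step 2: handle the $f_1$-edges.} These form disjoint lines or rays, on which a $2$-coloring is obstructed only by the parity problem. The naive bound $2+4=6$ fails, since a tree of degree $3$ can need $4$ Borel colors.

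\textbf{Step 3: the intended route.} Instead I would use the marker regions of Theorem~\ref{thm:rooted-markers}, or the decomposition $R_d$ built in the proof of Theorem~\ref{thm:perfect-matching}, which is available here by Proposition~\ref{prop:specialcase}. I would color the $f_2$-edges with three colors $\{a,b,c\}$ as follows. For each $x$, the two edges $\{x',f_2(x)\}$ and $\{x,f_2(x)\}$ get distinct colors in $\{a,b\}$, and the edge from $f_2(x)$ upward gets $c$. This is proper at every vertex: a vertex $y$ has its two children-edges colored $a,b$ and its parent edge colored $c$. Choosing which sibling gets $a$ is a Borel choice, for instance via a Borel linear order.

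With the $f_2$-edges colored by $\{a,b,c\}$, the $f_1$-edges form paths on which every vertex already sees all three colors. I would then color the $f_1$-paths with three new colors $\{1,2,3\}$. Any path or ray can be Borel properly edge colored with $3$ colors, because a line with $3$ colors is handled by a Borel maximal independent set argument (equivalently, $\chi'_B(G_{f_1})\le 3$ by Kechris--Solecki--Todorcevic). This gives $3+3=6$ colors, and it appears to finish immediately.

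\textbf{Step 4: check the $f_2$-coloring.} I need to verify that the edge from $f_2(x)$ to $f_2^2(x)$ also gets $c$ from the vertex $f_2(x)$'s perspective. It does not: it is colored $a$ or $b$ as a child-edge of $f_2^2(x)$, and also $c$ would be assigned to it. So the rule must be formulated per edge. The edge $\{y,f_2(y)\}$ gets $a$ or $b$ according to which sibling $y$ is. At the vertex $y$, its two child-edges are colored $a$ and $b$, and its parent edge is also $a$ or $b$, which is a conflict.

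The fix is to use the third color by alternating levels. Color the edge $\{y,f_2(y)\}$ from the pair $\{a,b\}$, $\{b,c\}$, or $\{c,a\}$ according to $\ell(y) \bmod 3$, where $\ell$ is a Borel $f_2$-level function. Then at a vertex the child-edges use the pair for level $\ell$ and the parent edge uses the pair for level $\ell+1$, with disjointness handled by ordering within each pair. This is exactly where the markers enter. A global $f_2$-level mod $3$ need not exist Borel-ly, but within the $R_d$-regions, with $d_2\in\{d,d+1\}$, levels are defined. Across $f_2$-faces a bounded local patch is needed, and I would absorb it using the $f_1$-colors near the faces, in the spirit of the modification in Theorem~\ref{thm:perfect-matching}.

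\textbf{Main obstacle.} The main obstacle is this face repair. Near $F_2^{\pm}$ faces, where the mod-$3$ level pattern breaks, one must recolor finitely many edges using the slack that the $f_1$-paths have only degree $2$ against $3$ colors. Proving that a consistent local recoloring always exists, a finite case analysis done separately for each face, is where I expect the real work to lie.
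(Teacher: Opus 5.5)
There is a genuine gap. Your reduction to disjoint palettes is sound in itself. On $F(X)$ the $f_1$-edges and $f_2$-edges are distinct, and the line graph of the $f_1$-lines is again $G_{f_1}$, so three Borel colors suffice there. The theorem therefore reduces to a Borel proper $3$-edge-coloring of the $3$-regular forest $G_{f_2}\upharpoonright F(X)$. That is exactly the step you never supply.

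Your mod-$3$ scheme fails even locally, with a genuine level function. Suppose edges $\{y,f_2(y)\}$ with $\ell(y)=\ell$ are colored from a pair $P_\ell$.
\begin{itemize}
\item At a vertex $z$ of level $\ell+1$, its two child edges are siblings, so they use both colors of $P_\ell$.
\item Its parent edge is then forced to be the single color in $P_{\ell+1}\setminus P_\ell$.
\item Every edge with lower endpoint at level $\ell+1$ is such a parent edge, so all of them get the same color.
\item Hence the two child edges at any vertex of level $\ell+2$ clash.
\end{itemize}
So the obstruction is not Borelness of levels or the $f_2$-faces: no level-periodic proper $3$-edge-coloring exists. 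In any proper $3$-edge-coloring the colors of a sibling pair force the color of the parent edge, so the coloring is determined upward and free only downward.

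The proposed ``face repair'' by borrowing from $\{1,2,3\}$ is also unsupported. A Borel $3$-edge-coloring of the $f_1$-lines generally has no spare color at an edge, since its two line-neighbours may already use the other two. Any $f_2$-edge taking a color from $\{1,2,3\}$ therefore forces a recoloring of $f_1$-lines, which reopens the parity problem that made three colors necessary there. What you call the main obstacle is the whole content of the proof, not a finite case analysis.

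The paper does not separate palettes. It uses the rootless regions $R_D$ to cut both the $f_1$-lines and the $f_2$-trees into finite pieces.
\begin{itemize}
\item It reserves $c_1,c_2$ for the $f_2$-edges leaving through $F_2^+$ (these come in sibling pairs) and $c_3$ for the $f_1$-edges leaving through $F_1^+$.
\item Inside a region it greedily colors the finite $f_2$-forest on the $F_1^+$-face with $\{1,2,3\}$ and transports this coloring column by column along $f_1$.
\item Hence both endpoints of an internal $f_1$-edge see the same three $f_2$-colors, and the internal $f_1$-chains can be colored greedily from $\{1,2,3,c_1,c_2\}$.
\end{itemize}

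If you want to keep your disjoint-palette route, the missing idea is a Borel perfect matching $M$ of the $f_2$-forest that contains infinitely many edges of every forward $f_2$-orbit. Every path of the remaining $2$-regular forest then has a unique highest vertex, which is Borel-selectable, and each path can be $2$-colored outward from it. One way to build such an $M$ is as follows. Take a Borel $\lambda\colon F(X)\to\{0,1\}$ giving siblings different values and taking both values infinitely often along every forward $f_2$-orbit. Put $\{x,f_2(x)\}$ in $M$ exactly when the least $k\ge 0$ with $\lambda(f_2^k(x))=0$ is odd.
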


\begin{proof}
By Theorem~\ref{thm:Weilacher} or Proposition~\ref{prop:specialcase}, the hypothesis of
Theorem~\ref{thm:markers} is satisfied. Fix $D>0$, and let
$R_D\subseteq E_{f_1,f_2}\upharpoonright F(X)$
be the subequivalence relation given by Theorem~\ref{thm:markers}. Let $\prec$ be a Borel linear ordering of $F(X)$. Let the set
of colors be
$\{1,2,3,c_1,c_2,c_3\}$.

We define the desired coloring $c$ by coloring the edges in several stages.
Let $x'$ denote the sibling of $x$. Then we always have $xR_Dx'$. If
$x,x'\in F_1^+([x]_{R_D})\cap F_2^+([x]_{R_D})$
and $x\prec x'$, define
$$
c(\{x,f_2(x)\})=c_1,
\qquad
c(\{x',f_2(x')\})=c_2.
$$
If
$y,y'\in F_2^+([x]_{R_D})$ and $f_1^j(y)=x,f_1^j(y')=x'$ for some $0\le j\le diam_{f_1}([x]_{R_D})$, define
$$
c(\{y,f_2(y)\})=c_1,\qquad
c(\{y,f_2(y')\})=c_2
$$
Moreover, for every
$z\in F_1^+([z]_{R_D})$,
define
$$
c(\{z,f_1(z)\})=c_3.
$$
Thus, all edges in between different $R_D$-classes have been colored. 

To color the edges within an $R_D$-class, we first color the $f_2$-edges whose endpoints lie on the
$F_1^+$-face of the $R_D$-class, using the colors $1,2,3$. This is possible since these $f_2$-edges form a binary forest, and therefore a greedy algorithm will do. The
remaining $f_2$-edges are colored columnwise: for an $f_2$-edge
$\{y,f_2(y)\}$, choose the unique point
$x\in F_1^+([y]_{R_D})$
such that $f_1^j(y)=x$ for some $j\ge 0$, and define
$$
c(\{y,f_2(y)\})=c(\{x,f_2(x)\}).
$$

It remains to color the remaining $f_1$-edges. Starting from an $x\in F_1^-([x]_{R_D})$ and moving along the $f_1$ forward direction, we use a greedy algorithm with the colors
$\{1,2,3,c_1,c_2\}$. Note that for every $f_1$-edge $\{y,f_1(y)\}$ to be colored, the colors already used on
edges incident with $y$ or $f_1(y)$ do not exhaust this set of five colors.
Thus at least one color is available. Since the remaining $f_1$-edges form
a finite chain inside the $R_D$-class, this greedy algorithm is Borel.

Therefore $c$ is a Borel proper edge coloring of
$G_{f_1,f_2}\upharpoonright F(X)$ with six colors. Hence
$\chi'_B\bigl(G_{f_1,f_2}\upharpoonright F(X)\bigr)\le 6$.
\end{proof}

The next result gives a general upper bound for the Borel edge chromatic number. This bound improves the general result from \cite{KechrisSoleckiTodorcevic1999} by $(n-1)(k-1)$.

\begin{theorem}
\label{thm:edgecoloring}
    Let $X$ be a standard Borel space. For $n\in\mathbb{N}$, suppose that $f_1,\cdots,f_n:X\to X$ are commuting Borel functions each of which is $k$-to-1. 
 Then $\chi'_B(G_{f_1,...,f_n}\upharpoonright F(X))\leq (n+1)(k+3)-5.$
\end{theorem}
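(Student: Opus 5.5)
Since each $f_i$ is $k$-to-$1$, the graph $G_{f_1,\dots,f_n}\upharpoonright F(X)$ has maximum degree at most $n(k+1)$. By Theorem~\ref{thm:Weilacher}, the hypothesis of Theorem~\ref{thm:markers} holds with $C=2n^2$. The plan imitates the proof of Theorem~\ref{thm:color}. Fix a large $D$ and take the rootless marker decomposition $R_D$ from Theorem~\ref{thm:markers}. By Remark~\ref{rem:uniform-upper-bound}, every $R_D$-class is finite, so any greedy procedure run inside a class along a Borel linear order $\prec$ is Borel. Every edge is an $f_i$-edge $\{y,f_i(y)\}$ for some $i$. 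We color the edges in $n$ rounds, handling the $f_n$-edges first and the $f_1$-edges last, and inside each round we treat edges leaving a class before edges inside it.

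In the round for $f_i$, the edges $\{y,f_i(y)\}$ with $y\in F_i^+([y]_{R_D})$ cross between classes. At a vertex $w$, such edges arrive from at most $k$ preimages and leave through one edge. I would give these crossing edges a reserved palette of size about $k+1$, chosen depending on $i$, so that the crossing edges of the same direction at a vertex get distinct colors. The $f_i$-edges inside a class are then colored by the columnwise trick from Theorem~\ref{thm:color}. The $f_i$-edges lying on the face $F_j^+$ of the class for the last processed direction form a forest of in-degree at most $k$. We color this forest greedily and propagate the colors along the other directions inside the class, so that parallel $f_i$-edges in one column get the same color. The purpose is to make the colors already used at a vertex by earlier rounds depend only on the column structure, which controls their number.

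The main obstacle is the count. When we greedily color an $f_i$-edge $\{y,f_i(y)\}$, the number of already-colored edges meeting it must stay below the target $(n+1)(k+3)-5$. The edges meeting it consist of at most $2k-1$ other $f_i$-edges (the remaining preimages of $y$ and $f_i(y)$, plus one forward edge), together with the at most $(k+1)$ edges at each endpoint in each of the other $n-1$ directions. A naive bound gives $2n(k+1)-2$ conflicts, which is the Kechris--Solecki--Todorcevic bound. The saving of $(n-1)(k-1)$ has to come from commutativity. In the free part, a back-edge at $y$ in direction $j$ and a back-edge at $f_i(y)$ in direction $j$ are related by the commuting square $f_if_j=f_jf_i$. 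With columnwise coloring, the two endpoints therefore see overlapping color sets in direction $j$: roughly $k+3$ distinct colors per direction instead of $2(k+1)$.

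I would first try to prove this claim exactly. For each $j\neq i$, at most $k+3$ colors appear among the $j$-edges meeting $\{y,f_i(y)\}$, counting the reserved crossing colors. Then $(n-1)(k+3)$ plus the $2k-1$ colors from direction $i$ stays below $(n+1)(k+3)-5$, and the greedy step succeeds. If the column propagation breaks near faces, I would fix this by choosing $D$ large and using the face separation built into the $R_D$-classes. Faces of different directions are then far apart, so near any edge at most one face forces extra colors, and those extra colors are absorbed by the reserved crossing palettes.
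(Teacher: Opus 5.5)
There is a genuine gap. Your count rests entirely on the claim that, for each $j\neq i$, at most $k+3$ colors occur on the $f_j$-edges meeting $\{y,f_i(y)\}$, and you only say you would try to prove it.

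The mechanism you propose for this claim, columnwise propagation, fails once two generators are non-injective. In Theorem~\ref{thm:color} colors are copied along the injective map $f_1$. Copying colors along a non-injective $f_j$ can force two adjacent edges to share a color.

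Here is an example. Let $\tau$ be the shift on $2^{\mathbb Z}$ and $\sigma$ the shift on $2^{\mathbb N}$. On $X=2^{\mathbb Z}\times 2^{\mathbb Z}\times 2^{\mathbb N}$ let $f_1(a,b,c)=(\tau a,b,\sigma c)$ and $f_2(a,b,c)=(a,\tau b,\sigma c)$. These commute and are exactly $2$-to-$1$, and $(a,b,c)\in F(X)$ whenever $a$ and $b$ are aperiodic. Take $z=(a,b,c)$ and $z'=(a,b,c')$ with $c,c'$ differing only at coordinate $0$. Then $f_1(z)=f_1(z')$ and $f_2(z)=f_2(z')$.

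Consequently the $f_1$-edges $\{z,f_1(z)\}$ and $\{z',f_1(z)\}$ are adjacent. They lie in one $R_D$-class by the last clause of Theorem~\ref{thm:markers}. They have the same image $\{f_2(z),f_2f_1(z)\}$ under $f_2$, so your scheme gives them the same color. The example is symmetric, so the same failure occurs for $f_2$-edges propagated along $f_1$.

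This configuration occurs at every point, including deep in the interior of the classes. Enlarging $D$ or invoking face separation therefore does not repair it. Without a proper column structure, the overlap of color sets at $y$ and $f_i(y)$, which was supposed to produce the saving $(n-1)(k-1)$, is not available.

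The paper obtains the saving by a different mechanism that never copies colors along columns. It works with the rooted regions of Theorem~\ref{thm:rooted-markers}. Each crossing $f_i$-edge $\{x,f_i(x)\}$ gets a color from a private palette $\Gamma_i^\partial$ of size $k$, according to the position of $x$ in its $f_i$-fiber. The coloring is then extended inside each region $A$ via Lemma~\ref{lem:localcolor}, by induction on $n$.

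In that lemma, the internal $f_n$-edges are sorted into layers $E_{n-1},\dots,E_0$ according to which backward faces $F_m^-(A)$ contain them. In a rooted region, an internal $f_n$-edge meets $F_m^-(A)$ only if it lies in it. An edge of $E_m$ with $m\ge 1$ may reuse $\Gamma_m^\partial$, because its endpoints carry no crossing backward $f_m$-edges and at most one forward $f_m$-edge each.

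Each layer is then colored greedily, top-down along the $f_n$-trees, from a palette strictly larger than the number of forbidden colors. This reuse of the boundary palettes in the interior is what brings the total down to $(n+1)(k+3)-5$.
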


% We can assume that $f_1,\cdots,f_n$ are surjective. 
The rest of this subsection is devoted to a proof of Theorem~\ref{thm:edgecoloring}. %Missing preimages only decrease the number of adjacent edges, so the coloring estimates below are unaffected. 
For an edge $e$ of $G_{f_1,\dots,f_n}$ and a region $A\subseteq F(X)$, we say that $e$ is \emph{adjacent to} $A$ if $|e\cap A|=1$.
We first prove a lemma that allows us to extend a partial proper edge coloring of the set of edges adjacent to a rooted region $A$ to a proper coloring of all edges in $A$.

\begin{lemma}\label{lem:localcolor}
     Let $X$ be a standard Borel space and $n,k\in\mathbb{N}^+$. Let $f_1,\cdots,f_n:X\to X$ be commuting Borel functions each of which is $k$-to-1. Let $A\subseteq F(X)$ be a rooted region with $d_i(A)>0$ for every $1\le i\le n$. Let $E=\{e\colon e\cap A\neq\varnothing\}$. 
Let $\Gamma_n$ be a set of colors with $|\Gamma_n|=(n+1)(k+3)-5$. Suppose $\Gamma_n=\left(\bigcup_{1\le i\le n}\Gamma_i^\partial\right)\sqcup \Gamma_n^{\mathrm{aux}},$ where the sets $\Gamma_i^\partial$ and $\Gamma_n^{\mathrm{aux}}$ are pairwise disjoint, with
$|\Gamma_i^\partial|=k
\text{ and }
|\Gamma_n^{\mathrm{aux}}|=3(n-1)+k+1$. Then, for any partial proper coloring $c_0$  whose domain is the set of edges adjacent to $A$ such that each $f_i$-edge is colored by $\Gamma_i^\partial$, there is a Borel proper edge coloring $c\colon E \rightarrow \Gamma_n$ satisfying
    \begin{enumerate}[label={\rm (\roman*)},leftmargin=3.2em]
     \item $c$ extends $c_0$ and is a proper edge coloring of $E$;
     \item if $e\in E$ and $c(e)\in \Gamma_i^\partial$ for some $1\le i \le n$, then either $e$ is an $f_i$-edge adjacent to $A$  or $e$ is an $f_{j}$-edge for $j > i$ and $e \subseteq A$. 
 \end{enumerate}
\end{lemma}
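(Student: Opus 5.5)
The plan is to color the internal edges, meaning those $e\in E$ with $e\subseteq A$, one generating direction at a time. Within each direction I will run a greedy procedure that moves outward from $\operatorname{root}(A)$. Each direction gets its own palette, chosen so that (ii) holds automatically and so that the precolored adjacent edges of $c_0$ can only create conflicts in a controlled way.

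\emph{Setup.} Since $A$ is rooted, every $y\in A$ has unique exponents $a_1(y),\dots,a_n(y)$ with $0\le a_i(y)\le\diam_{f_i}(A)$ and $f_1^{a_1(y)}\cdots f_n^{a_n(y)}(y)=\operatorname{root}(A)$. Fix $j$. If $y\in A$ and $f_j(y)\in A$, then $a_j(f_j(y))=a_j(y)-1$, so the internal $f_j$-edges form a forest $\mathcal F_j$ on $A$. In $\mathcal F_j$ each vertex has at most $k$ children (because $f_j$ is $k$-to-$1$) and at most one parent. Hence $\mathcal F_j$ has maximum degree $k+1$, and in each tree the root vertex is the unique one in $F_j^+(A)$ on its $f_j$-orbit. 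I will process the edges $\{y,f_j(y)\}$ of $\mathcal F_j$ in increasing order of $\sum_i a_i(f_j(y))$, breaking ties with a fixed Borel linear order. Since $A$ is finite, any greedy choice made in this order is Borel.

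\emph{Palettes.} Split $\Gamma_n^{\mathrm{aux}}$ into a block $P_1$ of size $k+1$ and blocks $P_2,\dots,P_n$ of size $3$ each; this uses exactly $3(n-1)+k+1$ colors. Color $\mathcal F_1$ from $P_1$. An edge of $\mathcal F_1$ meets at most $k$ earlier edges of $\mathcal F_1$: the at most $k-1$ sibling edges at its upper endpoint and the parent edge. Such colors never conflict with $c_0$, because $P_1$ is disjoint from every $\Gamma_i^\partial$, so $k+1$ colors suffice. For $j\ge2$, color $\mathcal F_j$ from $Q_j:=\Gamma^\partial_{j-1}\sqcup P_j$, which has $k+3$ colors. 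Condition (ii) then holds: a color of $\Gamma_i^\partial$ appears on an internal edge only on an $f_{i+1}$-edge, and $i+1>i$. On adjacent edges it appears only through $c_0$, where it is used on $f_i$-edges. Because the $Q_j$ and $P_1$ are pairwise disjoint, edges of different directions cannot conflict unless they were assigned the same color from one of these disjoint blocks, which is impossible. The only possible conflicts for $e=\{y,f_j(y)\}\in\mathcal F_j$ are therefore these:
\begin{enumerate}[label=(\alph*)]
\item previously colored edges of $\mathcal F_j$, of which there are at most $k$;
\item precolored adjacent $f_{j-1}$-edges at $y$ or at $f_j(y)$ that carry colors of $\Gamma^\partial_{j-1}$.
\end{enumerate}

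\emph{Main obstacle.} The difficulty is to show that the conflicts in (a) and (b) together never block all $k+3$ colors of $Q_j$. A naive count can reach $k+2k+2$, since each endpoint may carry one forward adjacent $f_{j-1}$-edge and up to $k$ backward ones. The point to exploit is that an adjacent $f_{j-1}$-edge at a vertex $w\in A$ occurs only when $w\in F^+_{j-1}(A)$, and then there is just one such edge, or when $w$ has $f_{j-1}$-preimages outside $A$. In the rooted shape, with $d_{j-1}(A)>0$, the latter should happen only when $a_{j-1}(w)=\diam_{f_{j-1}}(A)$, i.e.\ on the back face. The back face is carried to itself by $f_j$, so there both $y$ and $f_j(y)$ lie on it. My strategy in that case is to color the back-face part of $\mathcal F_j$ first and to allow it all of $Q_j$, using that the $\Gamma^\partial_{j-1}$-colors at $y$ and at $f_j(y)$ come from a proper coloring $c_0$ and so are distinct at each vertex. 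I expect that the true bound on excluded colors comes from a counting argument of the following kind: the excluded $\Gamma^\partial_{j-1}$-colors number at most $k$, and $k$ colors of $\mathcal F_j$ plus $3$ spare ones suffice. If this simple count fails, I would first try recoloring along the $f_j$-tree, as in the greedy argument of Theorem~\ref{thm:color}, before enlarging the reserved blocks. This step is where I expect the constant $(n+1)(k+3)-5$ to be tight, and it is the part to check most carefully.
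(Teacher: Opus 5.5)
Your plan has a genuine gap, and it is the step you flag yourself. For $k\ge 3$, the palette $Q_j=\Gamma^\partial_{j-1}\sqcup P_j$ cannot color the internal $f_j$-edges lying on the backward face $F^-_{j-1}(A)$. No processing order or recoloring along $f_j$-trees fixes this, because the obstruction is a pigeonhole at a single vertex.

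Take $j=2$ and $w\in A$ with $a_1(w)=\diam_{f_1}(A)$, $0<a_2(w)<\diam_{f_2}(A)$, and $|f_1^{-1}(w)|=|f_2^{-1}(w)|=k$. This happens, for example, for the two coordinate shifts on $(k^{\mathbb N})^2$.
\begin{itemize}
\item The $k$ edges $\{z,w\}$ with $f_1(z)=w$ are adjacent to $A$ and share the vertex $w$. So any proper $c_0$ must use \emph{all} $k$ colors of $\Gamma^\partial_1$ on them.
\item But $w$ lies on $k+1$ internal $f_2$-edges: its parent and its $k$ children, all in $A$.
\item These need $k+1$ distinct colors from $Q_2\setminus\Gamma^\partial_1=P_2$, which has only $3$ colors.
\end{itemize}
So the count you hope for is false. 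The excluded $\Gamma^\partial_{j-1}$-colors can exhaust $\Gamma^\partial_{j-1}$ at one endpoint, and properness of $c_0$ at each vertex makes this worse, not better. Your allocation also has no slack to absorb these edges: $P_1$ is reserved for $f_1$-edges and each $P_j$ has only three colors.

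The missing idea, which drives the paper's proof, is to split the internal edges of a direction according to which backward faces they lie on. The paper inducts on $n$ and adds the $f_n$-edges last. For $i\neq j$, an internal $f_j$-edge meets $F^-_i(A)$ iff it is contained in it. So the internal $f_n$-edges fall into three kinds of classes:
\begin{itemize}
\item $E_{n-1}$: edges avoiding $F^-_{n-1}(A)$;
\item $E_m$ for $1\le m\le n-2$: edges contained in $F^-_{m+1}(A)\cap\cdots\cap F^-_{n-1}(A)$ but not in $F^-_m(A)$;
\item $E_0$: edges contained in every $F^-_i(A)$, $i<n$.
\end{itemize}
Each class is then colored from its own palette:
\begin{itemize}
\item Only $E_{n-1}$ is colored from $\Gamma^\partial_{n-1}$ plus three colors not used by the $f_1,\dots,f_{n-1}$-edges. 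This is your $Q_n$, and there your count $2+1+(k-1)<k+3$ is correct.
\item An edge of $E_m$ is colored from $\Gamma^\partial_m$ plus $2(n-m)+1$ auxiliary colors. Its endpoints are off $F^-_m(A)$ and on $F^-_i(A)$ for $m<i<n$. By property (ii) of the inductive coloring and the indexing of the auxiliary colors, the only earlier edges carrying these colors are the forward $f_i$-edges, $m\le i\le n-1$, at its two endpoints. With the parent and siblings, this gives at most $2(n-m)+k$ forbidden colors out of $2(n-m)+k+1$.
\item Edges of $E_0$ have all their backward edges precolored from boundary palettes. They are colored from the whole shared block $\Gamma^{\mathrm{aux}}_n$, with at most $2(n-1)+k$ forbidden out of $3(n-1)+k+1$.
\end{itemize}
This is why the auxiliary colors must be shared across directions, rather than split into a private $P_1$ and private triples $P_j$ as in your plan.
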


\begin{proof}
We prove it by induction on $n$.

The case $n=1$ is immediate. Assume now that $n>1$, and that the statement has already been proved for $n-1$ commuting  Borel functions each of which is $k$-to-$1$. For convenience, we may index the colors so that
$$
\Gamma_i^\partial=\{c_i^j:1\le j\le k\}
$$
for each $1\leq i\leq n$, and
$$
\Gamma_i^{\mathrm{aux}}
=
\{c_0^1,\dots,c_0^{k+1},c_1^{k+1},c_1^{k+2},c_1^{k+3},\dots,c_{i-1}^{k+1},c_{i-1}^{k+2},c_{i-1}^{k+3}\}
$$
for each $2\le i\le n$.

By the inductive hypothesis, there exists a proper edge coloring of all $f_i$-edges meeting $A$, for $1\le i\le n-1$, with values in
$$
\Gamma_{n-1}
=
\left(\bigcup_{1\le i\le n-1}\Gamma_i^\partial\right)\sqcup \Gamma_{n-1}^{\mathrm{aux}},
$$
and satisfies \textup{(i)}, \textup{(ii)} after given the colors of the $f_i$-edges adjacent to $A$. Fix such a coloring. Assume the $f_n$-edges $e=\{x,f_n(x)\}$ that are adjacent to $A$ are properly colored with colors in $\Gamma_n^\partial=\{c_n^j:1\le j\le k\}$. It remains to extend this coloring to the $f_n$-edges that are  contained in $A$.

We partition these edges according to their position relative to the backward faces
$$
F_1^-(A),\dots,F_{n-1}^-(A).
$$
Define
$$
E_{n-1}:=\{e\subseteq A: e \text{ is an }f_n\text{-edge and } e\cap F_{n-1}^-(A)=\varnothing\};
$$
for $1\le m\le n-2$, define
$$
E_m:=\left\{e\subseteq A:
e \text{ is an }f_n\text{-edge and }
e\subseteq \left(\bigcap_{i=m+1}^{n-1}F_i^-(A)\right)\setminus F_m^-(A)
\right\};
$$
and let
$$
E_0:=\left\{e\subseteq A:
e \text{ is an }f_n\text{-edge and }
e\subseteq \bigcap_{i=1}^{n-1}F_i^-(A)
\right\}.
$$
% Note that if $i\neq j$ and $e=\{x_1,x_2\}$ is an $f_{j}$-edge contained in $A$, then for any $y_1,y_2$ satisfying
% $$
% f_i(y_1)=x_1,\qquad f_i(y_2)=x_2,
% $$
% we have $y_1\in A\iff y_2\in A$.
% Therefore, for every $f_{j}$-edge $e\subseteq A$,
% $$
% e\subseteq F_i^-(A)\iff e\cap F_i^-(A)\neq\varnothing.
% $$
% It follows that every $f_n$-edge contained in $A$ belongs to exactly one of the sets $E_0,E_1,\dots,E_{n-1}$.

%更弱的版本↓
Note that if $i\neq j$ and $\{x,f_j(x)\}\subseteq A$. Choose
$\alpha_1,\ldots,\alpha_n,\beta_1,\ldots,\beta_n\in\mathbb N$
such that
$$
f_1^{\alpha_1}\cdots f_n^{\alpha_n}(x)=\operatorname{root}(A)
\quad\text{and}\quad
f_1^{\beta_1}\cdots f_n^{\beta_n}(f_j(x))=\operatorname{root}(A).
$$
Hence
$\alpha_i=\beta_i$ because $i\neq j$. By the definition of a
rooted region,
$$
x\in F_i^-(A)\iff \alpha_i=d_i(A),
\qquad
f_j(x)\in F_i^-(A)\iff \beta_i=d_i(A).
$$
Therefore,
$$
x\in F_i^-(A)
\quad\Longleftrightarrow\quad
f_j(x)\in F_i^-(A).
$$
Consequently, for every internal $f_j$-edge $e\subseteq A$,
$$
e\cap F_i^-(A)\neq\varnothing
\quad\Longleftrightarrow\quad
e\subseteq F_i^-(A).
$$
It follows that every $f_n$-edge contained in $A$ belongs to
exactly one of the sets $E_0,E_1,\ldots,E_{n-1}$.
%%结束

We now color these sets successively, starting from $E_{n-1}$ and proceeding down to $E_0$.

For $E_{n-1}$, let
$$
\Pi_{n-1}:=\{c_{n-1}^1,\dots,c_{n-1}^k,c_{n-1}^{k+1},c_{n-1}^{k+2},c_{n-1}^{k+3}\}.
$$
Fix a Borel linear ordering $\prec$ of $X$. We color the edges in $E_{n-1}$ recursively along the rooted $f_n$-trees, starting from the vertices in $F_n^+(A)$ and moving downward via $f_n^{-1}$.

First we make an observation. Let $e=\{x,y\}\in E_{n-1}$. By property \textup{(ii)} of the inductive coloring, those  already colored edges adjacent to $e$ whose colors lie in $\Pi_{n-1}$ are $f_{n-1}$-edges adjacent to $A$, since their colors are in 
$$
\Gamma_{n-1}^\partial=\{c_{n-1}^1,\dots,c_{n-1}^k\}.
$$
Since $e\cap F_{n-1}^-(A)=\varnothing$, each endpoint of $e$ is incident to at most one $f_{n-1}$-edge adjacent to $A$. Hence, at most two colors in $\Pi_{n-1}$ are forbidden, so some colors in $\Pi_{n-1}$ remain available.

Now suppose $|f_n^{-1}(y)|=q\le k,f_n(x_1)=\cdots=f_n(x_q)=y\in F_n^+(A)$, and $e_1=\{x_1, y\}, \dots, e_q=\{x_q, y\}\in E_{n-1}$, $x_1\prec x_2\prec \cdots \prec x_q$. We use a greedy algorithm to successively color $e_1, \dots, e_q$. By the above observation, at each step, there are at most $k+1$ many forbidden colors in $\Pi_{n-1}$ to avoid. Since $|\Pi_{n-1}|\geq k+2$, the algorithm works. This finishes the coloring of the $f_n$-edges one of whose endpoints is the top level of the $f_n$-tree. 

As we move downward via $f_n^{-1}$, suppose $e_1=\{x_1,y\}, \dots, e_q=\{x_q, y\}\in E_{n-1}$, $x_1\prec x_2\prec \cdots \prec x_q$, $f_n(x_1)=\cdots=f_n(x_q)=y$, and $\{y, f_n(y)\}\in A$ has been colored. We again use a greedy algorithm to successively color $e_1, \dots, e_q$. Now at each step there are at most $k+2$ many forbidden colors in $\Pi_{n-1}$ to avoid. Since $|\Pi_{n-1}|=k+3$, this algorithm still works. Thus we have colored all edges in $E_{n-1}$. 

For each $1\le m\le n-2$, let
$$
\Pi_m
:=
\Gamma_m^\partial
\cup
\bigcup_{j=m}^{n-2}\{c_j^{k+1},c_j^{k+2}\}
\cup
\{c_{n-1}^{k+1},c_{n-1}^{k+2},c_{n-1}^{k+3}\}.
$$
After the higher layers
$E_{m+1},\dots,E_{n-1}$
have already been colored, we now color edges in $E_m$ by colors in $\Pi_m$. 
% Note that for each such edge $\{x,f_n(x)\}$, edges adjacent to it with color in $\Pi_m$ must be of the form $\{x,f_i(x)\}$ or $\{f_n(x),f_if_n(x)\}$ for some $m\le i\le n-1$ or is the parent $f_n$-edge. Hence the same greedy counting argument applies, and $E_m$ can be colored properly using colors from $\Pi_m$.
Let $e=\{x,f_n(x)\}\in E_m$. Among the previously
colored $f_1,\ldots,f_{n-1}$-edges adjacent to
$e$, those whose colors lie in $\Pi_m$ can only be positive
$f_i$-edges for $m\le i\le n-1$. Hence at most
$2(n-m)$ colors in $\Pi_m$ are forbidden by such edges. In addition, the parent $f_n$-edge and the previously colored $f_n$-siblings forbid at most $1$ and $k-1$ additional colors, respectively.
Thus at most
$
2(n-m)+1+(k-1)=k+2(n-m)
$
colors are forbidden. Since
$
|\Pi_m|=k+2(n-m)+1,
$
at least one color remains available, and the greedy algorithm
properly colors $E_m$.

% Let
% $e=\{x,f_n(x)\}\in E_0$.
% For edges in $E_0$, we use colors in $\Gamma_n^{\mathrm{aux}}$ to color them. It easy to see that for $\{x,f_n(x)\}\in E_0$, we only need to be cautious about edges in the positive directions and its siblings. That is, there is at most $2n+k-1$ colors in $\Gamma_n^{\mathrm{aux}}$ that are adjacent to it. Thus the similar algorithm yields a proper coloring of $E_0$.

For an edge $e=\{x,f_n(x)\}\in E_0$, the only previously colored
edges that may use colors from $\Gamma_n^{\mathrm{aux}}$ are the
positive $f_i$-edges adjacent to its endpoints for
$1\le i\le n-1$, the parent $f_n$-edge, and the previously colored
$f_n$-siblings. Hence at most
$
2(n-1)+1+(k-1)=2n+k-2
$
colors are forbidden. Since
$|\Gamma_n^{\mathrm{aux}}|=3(n-1)+k+1$, the same greedy algorithm
properly colors $E_0$.
%结束

Combining the inductive coloring on the $f_1,\dots,f_{n-1}$-edges with the above coloring of the $f_n$-edges, we obtain a proper edge coloring of all edges meeting $A$. Clearly Property \textup{(i)} holds by the construction, and property \textup{(ii)} follows from the fact that a color in $\Gamma_i^\partial$ is used only on $f_i$-edges adjacent to $A$ or on $f_{j}$-edges contained in $A$ for some $j>i$.

Finally, since $A$ is finite, the set of all edges meeting $A$ is finite as well. Hence the resulting coloring is Borel.
\end{proof}

\begin{proof}[Proof of Theorem~\ref{thm:edgecoloring}]
 By Theorem~\ref{thm:Weilacher}, the hypothesis of Theorem~\ref{thm:rooted-markers} is satisfied.  Choose $d>0$, and let
$R_d\subseteq E_{f_1,\ldots,f_n}\upharpoonright F(X)$
be the rooted marker decomposition given by Theorem~\ref{thm:rooted-markers}.
Let $\Gamma_n$ be the color set from Lemma~\ref{lem:localcolor}.

Fix a Borel linear ordering $\prec$ of $F(X)$. We first color all edges crossing
distinct $R_d$-classes. If $e=\{x,f_i(x)\}$ is such an edge, assign to it a
color in $\Gamma_i^\partial$ according to the position of $x$ in the $\prec$-ordering of the finite
fiber $f_i^{-1}(f_i(x))$. This gives a Borel proper coloring $c_0$.

For each $R_d$-class $A$, the boundary coloring gives a proper coloring of
the edges adjacent to $A$. By Lemma~\ref{lem:localcolor}, it extends to a
proper coloring
$c_A:\{e:e\cap A\neq\varnothing\}\to\Gamma_n$.

Now define the global coloring $c$. If $e$ crosses two distinct
$R_d$-classes, let $c(e)$ be its boundary color $c_0(e)$. If $e\subseteq A$ for
some $R_d$-class $A$, set
$c(e)=c_A(e)$.
This is well-defined because internal edges belong to a unique $R_d$-class,
and crossing edges were colored globally before applying the local lemma.

The coloring is Borel and proper by the construction and
Lemma~\ref{lem:localcolor}. Hence
$$
\chi'_B\bigl(G_{f_1,\ldots,f_n}\upharpoonright F(X)\bigr)
\le |\Gamma_n|
=(n+1)(k+3)-5.
$$
\end{proof}

\addcontentsline{toc}{section}{References}
\bibliographystyle{amsalpha}
\bibliography{references}

\end{document}